\newcommand{\cA}{ {\mathcal A} }
\newcommand{\cB}{ {\mathcal B} }
\newcommand{\cC}{ {\mathcal C} }
\newcommand{\bC}{ {\mathbb C} }
\newcommand{\bN}{ {\mathbb N} }
\newcommand{\bQ}{ {\mathbb Q} }
\newcommand{\bR}{ {\mathbb R} }
\newcommand{\cT}{{\mathcal T}}
\newcommand{\cX}{{\mathcal X}}
\newcommand{\bZ}{ {\mathbb Z} }
\newcommand{\FF}{ \chi }
\newcommand{\chitild}{ \widetilde{\chi} }
\newcommand{\Kr}{ {\mathrm{Kr}} }
\newcommand{\moeb}{ {\mathrm{Moeb}} }
\newcommand{\sign}{ {\mathrm{sign}} }
\newcommand{\switch}{ {\mathrm{Switch}} }
\newcommand{\term}{ {\mathrm{term}} }
\newcommand{\Term}{ {\mathrm{Term}} }
\newcommand{\deltabar}{ \overline{\delta} }
\newcommand{\unitA}{ 1_{ { }_{\cA} } }
\newcommand{\ee}{ \varepsilon }
\newcommand{\kpieven}{ \Kr ( \pi )^{\mathrm{(even)}} } 
\newcommand{\roeven}{ \rho^{\mathrm{(even)}} } 
\newcommand{\qpoints}{ {\mathrm{(q-points)}} } 
\newcommand{\rhofateven}{ \rho^{\mathrm{(q-points)}} } 
\newcommand{\rhoqpairing}{ \rho^{\mathrm{(q-pairing)}} }
\newcommand{\odd}{ {\mathrm{odd}} }
\newcommand{\piodd}{ \pi^{ ( \odd ) } } 
\newcommand{\pifatodd}{ \pi^{\mathrm{(u-points)}} }
\newtheorem{theorem}{Theorem}[section]
\newtheorem{lemma}[theorem]{Lemma}
\newtheorem{proposition}[theorem]{Proposition}
\newtheorem{prop-and-notation}[theorem]{Proposition and Notation}
\newtheorem{corollary}[theorem]{Corollary}
\theoremstyle{remark}
\newtheorem{remark}[theorem]{Remark}
\newtheorem{definition}[theorem]{Definition}
\newtheorem{notation}[theorem]{Notation}
\newtheorem{def-and-remark}[theorem]{Definition and Remark}
\newtheorem{rem-and-notation}[theorem]{Remark and Notation}
\newtheorem{example}[theorem]{Example}
\numberwithin{equation}{section}
\title{Star-cumulants of free unitary Brownian motion}
\author[N. Demni]{Nizar Demni}
\thanks{Nizar Demni was supported in part by Grant
ANR-11-LABEX-0020-01 from ANR, France.}
\address{N. Demni: IRMAR, Universit\'e de Rennes 1  \\
Campus de Beaulieu    \\ 
35042 Rennes cedex, France}
\email{nizar.demni@univ-rennes1.fr}
\author[M. Guay-Paquet]{Mathieu Guay-Paquet}
\thanks{Mathieu Guay-Paquet was supported by a Postdoctoral
Fellowship from NSERC, Canada.}
\address{M. Guay-Paquet: LaCIM   \\
Universit\'e du Qu\'ebec \`a Montr\'eal  \\
201 Pr\'esident-Kennedy   \\
Montr\'eal, Qu\'ebec H2X 3Y7, Canada}
\email{mathieu.guaypaquet@lacim.ca}
\author[A. Nica]{Alexandru Nica}
\thanks{Alexandru Nica was supported in part by a Discovery 
Grant from NSERC, Canada.}
\address{A. Nica: Department of Pure Mathematics    \\
University of Waterloo            \\
Waterloo, Ontario N2L 3G1, Canada}
\email{anica@uwaterloo.ca}
\begin{document}

\begin{abstract}
We study joint free cumulants of $u_t$ and $u_t^{*}$, where 
$u_t$ is a free unitary Brownian  motion at time $t$.  We 
determine explicitly some special families of such cumulants.
On the other hand, for a general joint cumulant of $u_t$ and 
$u_t^{*}$, we ``calculate the derivative'' for $t \to \infty$,
when $u_t$ approaches a Haar unitary.  In connection to the 
latter calculation we put into evidence an ``infinitesimal 
determining sequence'' which naturally accompanies an arbitrary
$R$-diagonal element in a tracial $*$-probability space.
\end{abstract}

\maketitle

\section{Introduction}

Let $(u_t)_{t \geq 0}$ be a free unitary Brownian motion 
in the sense of \cite{BV1992}, \cite{B1997} --- that is,
every $u_t$ is a unitary element in some 
tracial $*$-probability space $(\cA_t , \varphi_t )$, with 
$\varphi_t (u_t ) = e^{-t/2}$, and where the rescaled element 
$v_t := e^{t/2} u_t$ has $S$-transform given by
\begin{equation}  \label{eqn:1a}
S_{ v_t } (z) = e^{tz}, \ \ z \in \bC.
\end{equation}
Closely related to Equation (\ref{eqn:1a}), one has a nice 
formula for the free cumulants of $u_t$, i.e.~for the 
sequence of numbers 
$\bigl( \, \kappa_n (u_t, \ldots , u_t) \, \bigr)_{n=1}^{\infty}$, 
where $\kappa_n : \cA_t^n \to \bC$ is the $n$-th free 
cumulant functional of the space $( \cA_t , \varphi_t )$.  Indeed, 
these numbers are the coefficients of the $R$-transform $R_{u_t}$.
By using the relation between the $S$-transform and the 
compositional inverse of the $R$-transform (which simply says that 
$zS(z) = R^{ <-1> } (z)$), one finds that 
\begin{equation}  \label{eqn:1b}
R_{v_t} (z) = \frac{1}{t} W(tz), \ \ t > 0,
\end{equation}
where 
\begin{equation*}
W (y) = y - y^2 + \frac{3}{2} y^3 - \frac{8}{3} y^4 
+ \cdots + \frac{(-n)^{n-1}}{n!} y^n + \cdots
\end{equation*}
is the Lambert series.  Extracting the coefficient of $z^n$ in 
(\ref{eqn:1b}) gives the value of $\kappa_n (v_t, \ldots , v_t)$, 
then rescaling back gives
\begin{equation}  \label{eqn:1c}
\kappa_n ( u_t, \ldots , u_t ) 
= e^{-nt/2} \kappa_n ( v_t, \ldots , v_t ) 
= e^{-nt/2} \frac{(-n)^{n-1}}{n!}  \cdot t^{n-1}, 
\, \, n \in \bN, \, t \geq 0.
\end{equation}

In this paper we study joint free cumulants of $u_t$ and 
$u_t^{*}$, that is, quantities of the form
\[
\kappa_n \bigl( \, u_t^{ \omega (1) }, \ldots , 
u_t^{ \omega (n) } \, \bigr), 
\mbox{ where $n \in \bN$ and }
\omega = ( \omega (1), \ldots , \omega (n))\in \{ 1, * \}^n.  
\]
The motivation for paying attention to these joint free cumulants 
comes from looking at the limit $t \to \infty$, when $u_t$
approximates in distribution a Haar unitary.  Recall that a unitary 
$u$ in a $*$-probability space $(\cA , \varphi )$ is said to be a 
Haar unitary when it has the property that $\varphi (u^n) = 0$ for 
every $n \in \bZ \setminus \{ 0 \}$.  This property trivially implies 
$\kappa_n (u, \ldots , u) = 0$ for every $n \in \bN$, thus the free 
cumulants of $u$ alone do not look too exciting.  However, things 
become interesting upon considering the larger family of joint free 
cumulants of $u$ and $u^{*}$.  There we get the following 
non-trivial formula, first found in \cite{S1998}: for $\omega$ =
$( \omega (1), \ldots , \omega (n) ) \in \{ 1, * \}^n$ one has
\begin{equation}  \label{eqn:1e}
\kappa_n \bigl( u^{\omega (1)}, \ldots , u^{\omega (n)} \, \bigr) 
= \left\{  \begin{array}{cl}
(-1)^{k-1} C_{k-1},
        &    \mbox{if $n$ is even, $n = 2k$, and}                   \\
        &    \mbox{ $\ $    } \omega = (1,*,1,*, \ldots ,1,*)       \\
        &    \mbox{ $\ $ or } \omega = (*,1,*,1, \ldots ,*,1),      \\
0,      &    \mbox{otherwise,}
\end{array}  \right.
\end{equation} 
with $C_{k-1} = (2k-2)! / (k-1)!k!$, the $(k-1)$-th Catalan 
number.  Formula (\ref{eqn:1e}) leads to the combinatorial approach 
to $R$-diagonal elements --- these are elements in a $*$-probability 
space which display, in some sense, free independence in their polar 
decomposition (\cite{NS1997}, see also Lecture 15 of \cite{NS2006};
a brief review appears in Remark \ref{rem:2.6} below).
For an $R$-diagonal element $a$ in a tracial $*$-probability space
$( \cA , \varphi )$, the sequence 
$\bigl( \, \kappa_{2n} 
(a,a^{*}, \ldots , a,a^{*}) \, \bigr)_{n=1}^{\infty}$
is called ``determining sequence of $a$'' (and does indeed 
determine the joint distribution of $a$ and $a^{*}$); from this point 
of view, Equation (\ref{eqn:1e}) says that the determining sequence
of a Haar unitary consists of signed Catalan numbers.

Returning to the point of view that, for $t \to \infty$, the free 
unitary Brownian motion $u_t$ is an approximation of $u$, it then 
becomes natural to ask what can be said about the joint free 
cumulants of $u_t$ and $u_t^{*}$.  The expressions for these joint 
cumulants are far more involved than what is on the right-hand side 
of (\ref{eqn:1e}), but still turn out to have some tractable features.  
In order to discuss them, it is convenient to start from the fact 
(easily obtained from the general formula connecting free cumulants 
to moments in a noncommutative probability space) that for every 
fixed $\omega \in \{ 1, * \}^n$, the cumulant  $\kappa_n \bigl( 
\, u_t^{\omega (1)}, \ldots , u_t^{\omega (n)}  \, \bigr)$ is a 
quasi-polynomial in $-t/2$; more precisely, there exists a polynomial 
$Z_{\omega} \in \bQ [x,y]$, uniquely determined, such that 
\begin{equation}   \label{eqn:1f}
\kappa_n \bigl( \, u_t^{\omega (1)}, \ldots , 
                   u_t^{\omega (n)}  \, \bigr)
= Z_{\omega} (t, e^{-t/2} ), \ \ \forall \, t \in [ 0, \infty ).
\end{equation}
It is moreover easy to see that the $y$-degree of $Z_{\omega} (x,y)$ 
is at most $n$, and that all the powers of $y$ that appear in 
$Z_{\omega}$ have exponents of same parity as $n$.  In other words, 
we can write
\begin{equation}   \label{eqn:1g}
Z_{\omega} (x,y) = Z_{\omega}^{(n)} (x) \cdot y^n 
+ Z_{\omega}^{(n-2)} (x) \cdot y^{n-2} + \cdots ,
\end{equation}
with $Z_{\omega}^{(n)}, Z_{\omega}^{(n-2)}, \ldots$ in $\bQ [x]$. 
Note that the formula (\ref{eqn:1c}) which describes the cumulants 
of $u_t$ (without $u_t^{*}$) fits here, and can be read as 
\begin{equation}   \label{eqn:1gg}
Z_{ ( \, \underbrace{1,1, \ldots , 1}_n \, ) } (x,y) 
= \frac{(-n)^{n-1}}{n!}  \, x^{n-1} y^n, \ \ n \in \bN .
\end{equation}  

A less obvious fact about the polynomials $Z_{\omega}$ is that 
the number of relevant terms (counting from the top) in the 
expansion (\ref{eqn:1g}) is limited by how many times one switches
between the symbols `$1$' and `$*$', while going around the string
$\omega$.  Thus for $\omega = (1,1, \ldots , 1)$ we have 
$Z_{\omega} (x,y) = Z_{\omega}^{(n)} (x) \cdot y^n$
(as just seen above), then for $\omega$ of the form 
$(1,\ldots , 1,*, \ldots , *)$ we have
$Z_{\omega} (x,y) = Z_{\omega}^{(n)} (x) \cdot y^n
+ Z_{\omega}^{(n-2)} (x) \cdot y^{n-2}$, and so on.  This fact is
stated precisely in Section 3 of the paper, and proved in 
Theorem \ref{thm:3.7} of that section.  It is significant because
it gives information on the speed of decay of $\kappa_n 
\bigl( \, u_t^{\omega (1)}, \ldots , u_t^{\omega (n)}  \, \bigr)$
when $t \to \infty$, in the case (covering ``most'' strings 
$\omega \in \{ 1,* \}^n$) when the right-hand side of 
Equation (\ref{eqn:1e}) is equal to $0$.

Here are some more details about what we do in this paper, and 
about how it is organized.  Besides the present introduction, 
we have five sections.  After a review of background and 
notations in Section 2, some general basic properties of the 
polynomials $Z_{\omega}$ are established in Section 3.  Then 
Sections 4 and 5 study two special types of $\omega$'s, as follows.

$\bullet$ In Section 4 we look at strings of the form
$\omega = (1, \ldots , 1, *, \ldots , *)$, with $k$ occurrences
of `$1$' followed by $\ell$ occurrences of `$*$'. 
We retrieve by direct calculation the fact mentioned above, that
the expansion from Equation (\ref{eqn:1g}) is in this case reduced 
to its top two terms, and we show moreover how the two 
polynomials $Z_{\omega}^{(k+ \ell) } (x)$ and
$Z_{\omega}^{(k + \ell -2)} (x)$ can be written 
explicitly as Laplace transform integrals.

$\bullet$ In Section 5 we look at the case when $\omega$ is an 
alternating string of even length; in other words, we pay attention
(as suggested by formula (\ref{eqn:1e})) to free cumulants 
\[
\xi_n (t) := \kappa_{2n} ( u_t, u_t^{*}, \ldots , u_t, u_t^{*} ), 
\ \ n \in \bN, \ t \in [ 0, \infty ).
\]
The main point of this section is to observe a recursive formula 
for $\frac{d}{dt} \xi_n (t)$, which amounts to the fact that the 
generating function 
\[
H(t,z) := \frac{1}{2} + \sum_{n=1}^{\infty} \xi_n (t) z^n
\]
satisfies a quasi-linear partial differential equation 
of Burgers type,
\[
\partial_t H + 2z H \, \partial_z H = z, 
\ \mbox{ with initial condition } H(0,z) = 1/2.
\]
We also show how examining the 
characteristic curves of the above partial differential equation
gives further information on $\xi_n (t)$.

\vspace{6pt}

Finally, in Section 6 we look at a general string $\omega$, and 
we study the behaviour of the corresponding joint cumulant of 
$u_t$ and $u_t^{*}$ when $t \to \infty$.  We look at the limit
\[
\lim_{t \to \infty}  \frac{ 
\kappa_n \bigl( u_t^{ \omega (1) }, \ldots ,
                u_t^{ \omega (n) }  \bigr)     -
\kappa_n \bigl( u^{ \omega (1) }, \ldots ,
                u^{ \omega (n) }  \bigr)    }{e^{-t/2}} ,
\]
where $u$ is a Haar unitary.  This limit turns out to always 
exist, and to have a very pleasing form, which suggests some kind 
of ``infinitesimal determining sequence'' for a Haar unitary.  In 
Section 6 we also show how the idea of infinitesimal determining 
sequence can be extended to the framework of a general $R$-diagonal 
distribution --- this is done by considering products $u_t q$ where 
$q = q^{*}$ is free from $\{ u_t, u_t^{*} \}$, and then by taking 
the same kind of ``derivative at $t = \infty$'' as above.

$\ $

$\ $

\section{Background and Notation}

\setcounter{equation}{0}

This section gives a very concise review, intended mostly 
for setting notations, of free cumulants on a noncommutative 
probability space.  We follow the terminology from \cite{NS2006}
and, for the various definitions and facts stated below, we give 
specific page references to that monograph.

We start with the structure lying at the basis of the combinatorics
of free probability, the lattices $NC(n)$ of non-crossing partitions.
We will assume the reader to be familiar with these objects, and we 
merely list below some basic notation that we will use in connection 
to them.

$\ $

\begin{notation}                 \label{def:2.1}
{\em [$NC(n)$-terminology.] }
Let $n$ be a positive integer, and let us consider the set $NC(n)$ 
of all non-crossing partitions of $\{ 1, \ldots , n \}$.  

\vspace{6pt}

$1^o$ Partitions in $NC(n)$ will be denoted by letters like 
$\pi,\rho , \ldots$.  Typical explicit notation for a 
$\pi \in NC(n)$ is $\pi = \{ V_1 , \ldots , V_k \}$,
where $V_1 , \ldots , V_k$ are called the {\em blocks} of $\pi$. 
We sometimes simply write $V \in \pi$ to mean that 
``$V$ is one of the blocks of $\pi$.''

\vspace{6pt}

$2^o$ On $NC(n)$ we consider the partial order given by 
{\em reverse refinement}, where for $\pi , \rho \in NC(n)$ we have
$\pi \leq \rho$ if and only if every block of $\rho$ is a union of
blocks of $\pi$.  The partially ordered set $( NC(n), \leq )$ turns 
out to be a {\em lattice} --- that is, every $\pi , \rho \in NC(n)$ 
have a smallest common upper bound $\pi \vee \rho$ and a greatest 
common lower bound $\pi \wedge \rho$.  
(See \cite{NS2006}, pp.~144-146.)

The minimal and maximal element of $( NC(n), \leq )$ are denoted as 
$0_n$ (the partition of $\{ 1, \ldots , n \}$ into $n$ blocks of $1$ 
element each) and respectively as $1_n$ (the partition of 
$\{ 1, \ldots , n \}$ into one block with $n$ elements).

\vspace{6pt}

$3^o$  $( NC(n), \leq )$ has a special anti-automorphism called the 
{\em Kreweras complementation map}, which will be denoted as 
$\Kr : NC(n) \to NC(n)$ (or as $\Kr_n$, if we need to clarify what 
$n$ we are working with).  The definition of $\Kr_n$ is made by
using partitions of $\{ 1, \ldots , 2n \}$; we take a moment to 
review how this goes, because it illuminates a construction of 
the same kind which we introduce in Section 6.  

Let $\pi$ and $\rho$ be in $NC(n)$.  We will denote by 
$\piodd \sqcup \roeven$ the partition of $\{ 1, \ldots , 2n \}$ 
which is obtained when one turns $\pi$ into a partition of 
$\{ 1,3, \ldots , 2n-1 \}$ and one turns $\rho$ into a partition 
of $\{ 2,4, \ldots , 2n \}$, in the natural way.  That is, 
$\piodd \sqcup \roeven$ has blocks of the form 
$\{ 2i-1 \mid i \in V \}$ where $V$ is a block of $\pi$, and has 
blocks of the form $\{ 2j \mid j \in W \}$ where $W$ is a block of 
$\rho$.  Note that $\piodd \sqcup \roeven$ may not belong to 
$NC(2n)$, due to crossings between its odd and even blocks.  If 
we fix $\pi \in NC(n)$, then it actually turns out that the set
$\{ \rho \in NC(n) \mid \piodd \sqcup \roeven \in NC(2n) \}$
has a largest element with respect to reverse refinement order; 
this largest element is, by definition, the Kreweras complement 
of $\pi$.  That is, $\Kr ( \pi )$ is
defined via the requirement that for $\rho \in NC(n)$ we have:
\[
\piodd \sqcup \roeven \in NC(2n) \ \Leftrightarrow 
\ \rho \leq \Kr ( \pi ). 
\]
Here is a concrete example, considered for $n=5$, which also 
illustrates a standard way of drawing non-crossing partitions.
\\
\begin{center}
  \setlength{\unitlength}{0.3cm}
$\pi$ = 
  \begin{picture}(5,2)
  \thicklines
  \put(1,-1){\line(0,1){2}}
  \put(1,-1){\line(1,0){4}}
  \put(2,0){\line(0,1){1}}
  \put(2,0){\line(1,0){1}}
  \put(3,0){\line(0,1){1}}
  \put(4,-1){\line(0,1){2}}
  \put(5,-1){\line(0,1){2}}
  \put(0.8,1.2){1}
  \put(1.8,1.2){2}
  \put(2.8,1.2){3}
  \put(3.8,1.2){4}
  \put(4.8,1.2){5}
  \end{picture}
$\ $  \hspace{0.2cm} $\Rightarrow \ \piodd \sqcup \kpieven$ =
  \begin{picture}(10,4)
  \thicklines
  \put(1,-2){\line(0,1){4}}
  \put(1,-2){\line(1,0){8}}
  \put(3,0){\line(0,1){2}}
  \put(3,0){\line(1,0){2}}
  \put(5,0){\line(0,1){2}}
  \put(7,-2){\line(0,1){4}}
  \put(9,-2){\line(0,1){4}}
  \linethickness{0.6mm}
  \put(2,-1){\line(0,1){3}}
  \put(2,-1){\line(1,0){4}}
  \put(4, 1){\line(0,1){1}}
  \put(6,-1){\line(0,1){3}}
  \put(8, 1){\line(0,1){1}}
  \put(10,1){\line(0,1){1}}
  \put(0.8,2.2){1}
  \put(1.8,2.2){2}
  \put(2.8,2.2){3}
  \put(3.8,2.2){4}
  \put(4.8,2.2){5}
  \put(5.8,2.2){6}
  \put(6.8,2.2){7}
  \put(7.8,2.2){8}
  \put(8.8,2.2){9}
  \put(9.8,2.2){10}
  \end{picture}
$\ $ \hspace{0.2cm} $\Rightarrow \ \Kr ( \pi )$ =
  \begin{picture}(5,2)
  \thicklines
  \linethickness{0.6mm}
  \put(1,-1){\line(0,1){2}}
  \put(1,-1){\line(1,0){2}}
  \put(2,0){\line(0,1){1}}
  \put(3,-1){\line(0,1){2}}
  \put(4,0){\line(0,1){1}}
  \put(5,0){\line(0,1){1}}
  \put(0.8,1.2){1}
  \put(1.8,1.2){2}
  \put(2.8,1.2){3}
  \put(3.8,1.2){4}
  \put(4.8,1.2){5}
  \end{picture}
\end{center}

$\ $

$\ $

For more details on the $\Kr$ map, see \cite{NS2006}, pp.~147-148.
A neat fact which will be used in Section 5 below is that one
has
\[
| \, \Kr_n ( \pi ) \, | \ = \ n+1 - | \pi |,
\ \ \forall \, \pi \in NC(n),
\]
where $| \pi |$, $| \, \Kr_n ( \pi ) \, |$ denote the numbers
of blocks of the partitions in question.

\vspace{6pt}

$4^o$ The M\"obius function of $NC(n)$ will be denoted 
as $\moeb$ (or as $\moeb_n$, if we need to clarify what $n$ we are 
working with).  This function is defined on 
$\{ ( \pi, \rho ) \mid \pi, \rho \in NC(n)$, 

\noindent
$\pi \leq \rho \}$.  We will actually only use two special cases
of $\moeb$.  The first case is when $\pi = 0_n$; here we simply get
(see \cite{NS2006}, pp.~162-164)
\[
\moeb ( 0_n , \rho )   
= \prod_{W \in \rho} (-1)^{|W|-1} C_{|W|-1},
\]
where for $k \in \bN \cup \{ 0 \}$ we denote 
\[
C_k := \frac{(2k)!}{k! (k+1)!} \ \ \ 
\mbox{(the $k$-th Catalan number).}
\]
The second case we will encounter is the one having $\rho = 1_n$, 
which reduces to the above via the immediate observation that 
$\moeb ( \pi, 1_n ) = \moeb ( 0_n, \Kr ( \pi ) )$.
\end{notation}

$\ $

\begin{remark}   \label{rem:2.1}
In the description of $\Kr$ we used tacitly
the fact that one can talk about the lattice of non-crossing 
partitions $NC(X)$ for any finite totally ordered set $X$
(in particular for $X = \{ 2,4, \ldots , 2n \}$).  The lattice
$NC(X)$ can be of course canonically identified to $NC(n)$ for 
$n = |X|$, upon labelling the elements of $X$ as 
$1,2, \ldots , n$ in increasing order.

Another natural convention used in Notation \ref{def:2.1}.3 was 
that if $X$ and $Y$ are two disjoint finite sets, then we can put 
together a partition $\pi$ of $X$ with a partition $\rho$ of $Y$
in order to obtain a partition denoted as ``$\pi \sqcup \rho$'' 
of $X \cup Y$.  If $X \cup Y$ (hence each of $X,Y$ as well) is 
totally ordered and if we start with $\pi \in NC(X)$ and
$\rho \in NC(Y)$, then it may or may not be that 
$\pi \sqcup \rho \in NC( X \cup Y)$ --- the definition of the 
Kreweras complementation map is actually built around this fact.

We now move to the review of free cumulants.
\end{remark}

$\ $

\begin{notation}    \label{def:2.2}
{\em  [Restrictions of $n$-tuples.] }

\noindent
In order to write more concisely various formulas that will 
appear in the paper, it is convenient to use the following 
natural convention of notation.
Let $\cX$ be a non-empty set, let $n$ be a positive integer,
and let $(x_1, \ldots , x_n)$ be an $n$-tuple in $\cX^n$.  For a 
subset $V = \{ i_1, \ldots , i_m \} \subseteq \{ 1, \ldots , n \}$,
with $1 \leq m \leq n$ and $1 \leq i_1 < \cdots < i_m \leq n$,
we denote
\[
( x_1, \ldots , x_n ) \mid V :=
( x_{i_1}, \ldots , x_{i_m} ) \in \cX^m .
\]

We will use this notation in two ways: one of them (already 
appearing in the next definition) is when $\cX$ is an algebra 
$\cA$ of noncommutative random variables, and the other is when 
$\cX = \{ 1,* \}$ and we talk about the restriction 
$\omega \mid V \in \{ 1,* \}^m$ of a string 
$\omega \in \{ 1, * \}^n$.
\end{notation}

$\ $

\begin{definition}     \label{def:2.3}
{\em [Free cumulant functionals and $R$-transforms.]}

\noindent
Let $( \cA , \varphi )$ be a noncommutative probability space.

\vspace{6pt}

$1^o$ For every $n \in \bN$, the $n$-th {\em moment functional}
of $( \cA, \varphi )$ is the multilinear functional 
$\varphi_n : \cA^n \to \bC$ defined by
$\varphi_n ( a_1, \ldots , a_n) := \varphi (a_1 \cdots a_n),
\ \ a_1, \ldots , a_n \in \cA$.

\vspace{6pt}

$2^o$ For every $n \in \bN$, the $n$-th 
{\em free cumulant functional} of $( \cA, \varphi )$ is the 
multilinear functional $\kappa_n : \cA^n \to \bC$ defined by
\begin{equation}    \label{eqn:23b}
\kappa_n (a_1, \ldots , a_n) = \sum_{\pi \in NC(n)}
\Bigl( \moeb ( \pi, 1_n) \cdot
\prod_{V \in \pi}  \varphi_{|V|} 
\, \bigl( \, (a_1, \ldots , a_n) \mid V\, \bigr) \, \Bigr).
\end{equation}
Equation (\ref{eqn:23b}) is referred to as the 
{\em moment--cumulant formula}. 

\vspace{6pt}

$3^o$ Let $a$ be an element of $\cA$.  The formal power series
$R_a (z) := \sum_{n=1}^{\infty} \kappa_n (a, \ldots, a) \, z^n$
is called the {\em R-transform} of $a$. 
\end{definition}

$\ $

\begin{remark}       \label{rem:2.4}
Let $( \cA , \varphi )$ be a noncommutative probability space,
and consider its free cumulant functionals 
$\kappa_n : \cA^n \to \bC$, as above.

\vspace{6pt}

$1^o$  Let $\cB, \cC \subseteq \cA$ be unital subalgebras
which are freely independent.  The fundamental property of 
the $\kappa_n$'s is that $\kappa_n (a_1, \ldots, a_n) = 0$ 
whenever $n \geq 2$, $a_1, \ldots , a_n \in \cB \cup \cC$,
and there are elements from both $\cB$ and $\cC$ among 
$a_1, \ldots , a_n$.  We also record here a consequence 
of this fact --- a formula (presented in \cite{NS2006} on 
pp.~226-227) which expresses an alternating moment 
$\varphi (b_1 c_1 \cdots b_n c_n)$ in terms of
``free cumulants of the $b$'s and moments of the $c$'s'':
\begin{equation}    \label{eqn:24a}
\varphi (b_1 c_1 \cdots b_n c_n) = \sum_{\pi \in NC(n)}
\prod_{V \in \pi} \kappa_{|V|} \bigl( \, 
( b_1, \ldots , b_n ) \mid V \, ) \, \bigr) \cdot
\prod_{W \in \Kr ( \pi )} \varphi_{|W|} 
\bigl( \, ( c_1, \ldots , c_n ) \mid W \, ) \ \bigr).
\end{equation}

$\ $

$2^o$ We will make essential use of a result of 
Krawczyk and Speicher \cite{KS2000} (presented in \cite{NS2006}
on pp.~178-181), which gives a structured summation 
formula for ``free cumulants with products as entries'', as
follows.
Let $\sigma = \{ J_1, \ldots , J_k \} \in NC(n)$ be a partition 
where every block is an interval:
$J_1 = \{ 1, \ldots , j_1 \}, J_2 = \{ j_1 + 1, \ldots , j_2 \},
\ldots , J_k = \{ j_{k-1} +1, \ldots , j_k \}$
for some $1 \leq j_1 < j_2 < \cdots < j_k = n$.  Then for every
$a_1, \ldots , a_n \in \cA$ one has
\[
\kappa_k \bigl( a_1 \cdots a_{j_1}, \ a_{j_1 +1} \cdots a_{j_2}, 
\, \ldots , \, a_{j_{k-1}+ 1} \cdots a_{j_k} \bigr)
\]
\begin{equation}    \label{eqn:24b}
= \sum_{ \begin{array}{c}
{\scriptstyle \pi \in NC(n) \ such}   \\
{\scriptstyle that \ \pi \vee \sigma = 1_n}
\end{array}  }  \ \prod_{V \in \pi} 
\kappa_{|V|} \bigl( \, (a_1, \ldots , a_n) \mid V \, \bigr).
\end{equation}
In the special case when $\sigma = 1_n$, Equation (\ref{eqn:24b})
becomes a formula expressing the moment $\varphi (a_1 \cdots a_n)$ 
in terms of free cumulants; this special case turns out to be 
equivalent to (\ref{eqn:23b}), and also goes (same as 
(\ref{eqn:23b})) under the name of ``moment--cumulant formula''.

$\ $

$3^o$ We also record some useful properties of free cumulants 
which follow immediately from their definition, by taking into 
account obvious symmetries of the lattices $NC(n)$.

\vspace{6pt} 

(a) Invariance under cyclic permutations of entries:
\[
\kappa_n (a_1, \ldots , a_n) 
= \kappa_n (a_m, \ldots , a_n, a_1, \ldots , a_{m-1}), 
\ \ \forall \, 
1 \leq m \leq n  \mbox{ and } a_1, \ldots , a_n \in \cA.
\]

\vspace{6pt} 

(b) Left-right symmetry: if $\cC \subseteq \cA$ is a commutative
subalgebra, then
\[
\kappa_n (c_1, c_2, \ldots , c_n) 
= \kappa_n (c_n, \ldots , c_2, c_1),
\ \ \forall \, 
n \geq 1 \mbox{ and } c_1, \ldots , c_n \in \cC.
\]

\vspace{6pt} 

(c) Left-right symmetry in $*$-probability framework: 
suppose $( \cA , \varphi )$ is a $*$-probability space,
then one has
$\kappa_n (a_n^{*}, \ldots , a_2^{*}, a_1^{*} ) 
= \overline{\kappa_n (a_1, a_2, \ldots , a_n)}$,
$\forall \, 
n \geq 1 \mbox{ and } a_1, \ldots , a_n \in \cA$.

$\ $

$4^o$ Suppose again that $( \cA , \varphi )$ is a $*$-probability 
space.  Then by using the Cauchy-Schwarz inequality for 
the functional $\varphi$, one immediately sees that every unitary 
$u \in \cA$ has $| \varphi (u) | \leq 1$.  As a consequence, it 
follows that 
\begin{equation}   \label{eqn:25a}
\kappa_n ( u_1, \ldots , u_n ) \leq 16^n, \ \ \forall \, n \geq 1
\mbox{ and $u_1, \ldots , u_n \in \cA$ unitaries.}
\end{equation}
The constant $16$ in (\ref{eqn:25a}) appears upon writing 
cumulants in terms of moments as in Equation (\ref{eqn:23b}), 
then by using estimates on the M\"obius function ---
see the discussion on p.~219 of \cite{NS2006}.

From the bound (\ref{eqn:25a}) it is clear that for every unitary 
$u \in \cA$, the $R$-transform $R_u (z)$ (which was introduced in
Definition \ref{def:2.3}.3 as a formal power series) can also be 
viewed as an analytic function on the disc 
$\{ z \in \bC \mid \, |z| < 1/16 \}$.  
\end{remark}   

$\ $

\begin{remark}  \label{rem:2.6} 
{\em [R-diagonal elements.]}

Let $( \cA , \varphi )$ be a $*$-probability space, and 
suppose that $u,q \in \cA$ are such that $u$ is Haar unitary, 
$q = q^{*}$, and $q$ is free from $\{ u, u^{*} \}$.  The element 
$a := u q \in \cA$ is said to be {\em R-diagonal}.
The motivation for this name (introduced in \cite{NS1997}) is 
that there exists a sequence $(\alpha_k )_{k=1}^{\infty}$, 
called the {\em determining sequence} of $a$, such that for 
$\omega = (\omega (1), \ldots , \omega (n)) \in \{ 1,* \}^n$
one has:
\begin{equation}  \label{eqn:26a}
\kappa_n \bigl( a^{\omega (1)}, \ldots , a^{\omega (n)} \, \bigr) 
= \left\{  \begin{array}{cl}
\alpha_{n/2},
    &  \mbox{if $n$ even and } \omega = (1,*,1,*, \ldots ,1,*)   \\
    &  \mbox{ $\ $ or } \omega = (*,1,*,1, \ldots ,*,1),         \\
0,  &    \mbox{otherwise.}
\end{array}  \right.
\end{equation} 
The $\alpha_k$'s can be written in terms of the free cumulants of 
$q^2$ via a formula which looks similar to Equation (\ref{eqn:23b}):
\begin{equation}    \label{eqn:26b}
\alpha_k = \sum_{\pi \in NC(k)}
\Bigl( \moeb ( \pi, 1_k) \cdot
\prod_{V \in \pi}  \kappa_{|V|} (q^2, \ldots , q^2) \, \Bigr),
\ \ k \in \bN .
\end{equation}
The derivation of these facts is presented in \cite{NS2006} on 
pp.~241-244 of Lecture 15.

Note that Equation (\ref{eqn:1e}) from the Introduction corresponds 
to the special case $q= \unitA$ of the above formulas.  Indeed, in 
this case the sum on the right-hand side of (\ref{eqn:26b}) has only 
one non-vanishing term, corresponding to $\pi = 0_k$, and we get
$\alpha_k = \moeb ( 0_k, 1_k) = (-1)^{k-1} C_{k-1}$, as 
stated in (\ref{eqn:1e}).
\end{remark}

$\ $

$\ $

\section{The polynomials $Z_{\omega}$}

\setcounter{equation}{0}

\begin{prop-and-notation}   \label{prop:3.1}
Let $\omega = ( \omega (1), \ldots , \omega (n) )$ be a string
in $\{1, *\}^n$, for some $n \geq 1$.  There exists a polynomial 
$Z_{\omega} \in \bQ [x, y]$, uniquely determined, such that
\begin{equation}   \label{eqn:31a}
\kappa_n \bigl( \, u_t^{ \omega (1) }, \ldots , 
u_t^{ \omega (n) } \, \bigr) = Z_{\omega} (t, e^{-t/2} ),
\ \ \forall \, t \in [ 0, \infty ).
\end{equation}
Moreover, the polynomial $Z_{\omega} (x,y)$ has the form 
\begin{equation}   \label{eqn:31b}
Z_{\omega} (x,y) = \sum_{0 \leq j \leq n/2}
Z_{\omega}^{(n-2j)} (x) \cdot y^{n-2j}, 
\end{equation}
where $Z_{\omega}^{(n-2j)} \in \bQ [x]$ for 
$0 \leq j \leq n/2$.
\end{prop-and-notation}

\begin{proof}  We will give an explicit formula for the polynomial
$Z_{\omega}$.  In order to state it, we introduce some preliminary 
items of notation.  We first recall that Lemma 1 on page 4 of 
\cite{B1997} says that the moments of $u_t$ are
$\varphi_t ( u_t^n ) = Q_n (t) e^{-nt/2}$, $n \geq 1$, where 
\[
Q_n (t) = -\sum_{j=0}^{n-1} 
\frac{(-n)^{j-1}}{j!}\binom{n}{j+1} t^j.
\]
For a string $\omega$ in $\{ 1, * \}^n$ which has $k$ occurrences 
of the symbol ``$1$''  and $\ell = n-k$ occurrences of ``$*$'', 
we then introduce a polynomial $M_{\omega} \in \bQ [x,y]$ defined by
\begin{equation}   \label{eqn:31c}
M_{\omega}(x,y) := \left\{  \begin{array}{ll}
Q_{ |k- \ell | } (x) \, y^{|k- \ell|},  
                            &  \mbox{if $k \neq \ell$}   \\
1,                          &  \mbox{if $k= \ell$.}
\end{array}  \right.
\end{equation}
[For instance, if $n=7$ and $\omega = (1,1,*,1,1,*,1)$ then 
$M_{\omega} (x,y) = Q_3 (x) \, y^3 = (1- 3x + \frac{3}{2}x^2) y^3$.]

Based on (\ref{eqn:31c}), we define the $Z_{\omega}$'s as 
follows: for every $\omega \in \{ 1, * \}^n$ we put
\begin{equation}    \label{eqn:31d}  
Z_{\omega} := \sum_{\pi \in NC(n)} \moeb ( \pi , 1_n ) \cdot
\Bigl( \, \prod_{V \in \pi} M_{\omega \mid V} \, \Bigr) ,
\end{equation}
where the notations related to $NC(n)$ and its M\"obius function
are as in Section 2.  

\noindent
[A concrete example: if $n=3$ and $\omega = (1,*,1)$, then
\[
Z_{(1,*,1)} := M_{(1,*,1)} - M_{(1)} M_{(*,1)} 
- M_{(1,1)} M_{(*)} - M_{(1,*)} M_{(1)} 
+ 2 M_{(1)} M_{(*)} M_{(1)} ;
\]
this comes, upon substituting the $M$'s, to 
$Z_{(1,*,1)} (x,y) = (x+1) y^3 - y$.]

Fix a $t \in [0, \infty)$, and for every $n \in \bN$ let
$\varphi_n : \cA_t^n \to \bC$ be the $n$-th moment functional 
of $( \cA_t, \varphi_t )$.  Then one has
\begin{equation}   \label{eqn:31e}
\varphi_n \bigl( \, u_t^{\omega (1)}, \ldots , 
u_t^{\omega (n)} \, \bigr) = M_{\omega} (t, e^{-t/2}), 
\ \ \forall \, \mbox{$n \in \bN$ and $\omega \in \{ 1,* \}^n$.}
\end{equation}
This in turn implies that, for every $n \in \bN$ and 
$\omega \in \{ 1,* \}^n$:
\[
\kappa_n 
\bigl( \, u_t^{\omega (1)}, \ldots , u_t^{\omega (n)} \, \bigr)
= \sum_{\pi \in NC(n)} \moeb ( \pi , 1_n ) \cdot 
\Bigl( \, \prod_{V \in \pi} 
\varphi_{|V|}  \bigl( \, ( u_t^{\omega (1)}, \ldots , 
                           u_t^{\omega (n)} ) \mid V  \, \bigr) 
\, \Bigr) 
\]
\[
= \sum_{\pi \in NC(n)} \moeb ( \pi , 1_n ) \cdot 
\bigl( \, \prod_{V \in \pi} 
M_{\omega \mid V} (t, e^{-t/2}) \, \bigr) 
= Z_{\omega} (t, e^{-t/2})
\]
(where we first used the moment--cumulant formula (\ref{eqn:23b}), 
then we invoked Equations (\ref{eqn:31e}) and (\ref{eqn:31d})).  
Thus $Z_{\omega}$ has the property stated in 
Equation (\ref{eqn:31a}).

The uniqueness of $Z_{\omega}$ with the property stated in 
Equation (\ref{eqn:31a}) follows from general considerations 
(a polynomial in $\bQ [x,y]$ is determined by its values
on pairs $(t, e^{-t/2})$, with $t \in [ 0, \infty )$).

Finally, let us also verify the specific form of $Z_{\omega}$ that
was indicated in Equation (\ref{eqn:31b}).  It suffices to show 
that: for every $\pi \in NC(n)$, 
the term indexed by $\pi$ in the sum on the right-hand side of 
(\ref{eqn:31d}) is of the form $y^s \cdot T(x)$, where 
$s \in \{ 0, 1 , \ldots , n \}$ has same parity as $n$, and 
where $T \in \bQ [x]$.  So fix a partition 
$\pi = \{ V_1, \ldots , V_k \} \in NC(n)$, and for every 
$1 \leq j \leq k$ denote by $p_j$ and by $q_j$ the number of 
occurrences of ``$1$'' and respectively ``$*$'' in the restricted 
word $\omega | V_j$. The term indexed by $\pi$ in the sum on 
the right-hand side of (\ref{eqn:31d}) is
$\moeb ( \pi , 1_n) \cdot 
\prod_{j=1}^k Q_{|p_j-q_j|} (x) y^{|p_j-q_j|},$
where we set $Q_0 :=1$. This is indeed of the form $y^s \cdot T(x)$, 
with $s := \sum_{j=1}^k |p_j - q_j|$, and we are only left to check 
that $n-s$ is an even non-negative integer. But the latter fact 
follows from 
\[
n-s = \sum_{j=1}^k (p_j + q_j) - \sum_{j=1}^k |p_j - q_j|
= \sum_{j=1}^k (p_j + q_j - |p_j - q_j| ),
\]
where every $p_j + q_j - |p_j - q_j|$ is an even non-negative 
integer.
\end{proof}

$\ $

\begin{example}
Here are some concrete examples of polynomials $Z_{\omega}$:
\[
\begin{array}{lcl}
Z_{(1,*)} (x,y)      & = & - y^2 + 1,                           \\ 
Z_{(1,1,*)} (x,y)    & = & (x+1) y^3 - y,                       \\
Z_{(1,1,1,*)} (x,y)  & = &
                - ( \frac{3}{2} x^2 + 2x + 1) y^4 + (x+1) y^2,  \\
Z_{(1,1,*,*)} (x,y)  & = &  - (x^2 + 2x + 2) y^4 + 2 y^2,       \\
Z_{(1,*,1,*)} (x,y)  & = & - (2x+3) y^4 + 4 y^2 - 1.            \\
\end{array}
\]
If we add to this list the formula for a polynomial 
$Z_{(1,1, \ldots , 1)}$ from Equation (\ref{eqn:1gg}), and if 
we take into account some obvious invariance properties of the 
$Z_{\omega}$'s (as recorded in the next remark), then  
these examples cover all strings 
$\omega \in \{ 1 , * \}^n$ for $n \leq 4$.
\end{example}

$\ $

\begin{remark}  \label{rem:3.2}
The polynomials $Z_{\omega}$ have some invariance properties 
which follow directly from their definition.

\vspace{6pt}

$1^o$ Let $\omega, \omega ' \in \{ 1, * \}^n$ be such that
$\omega '$ is obtained from $\omega$ via a cyclic permutation.
The invariance of free cumulants under cyclic permutations 
of entries gives 
$Z_{\omega} (t, e^{-t/2}) = Z_{\omega '} (t, e^{-t/2})$,
$t \in [0 , \infty )$, which implies that the polynomials 
$Z_{\omega}$ and $Z_{\omega '}$ coincide.

\vspace{6pt}

$2^o$ The same conclusion as in $1^o$ holds if we take $\omega '$
to be obtained from $\omega$ by reversing the order of its 
components, $\omega ' = ( \omega (n), \ldots , \omega (1) )$.
(This time we use the invariance property of free cumulants 
that was reviewed in Remark \ref{rem:2.4}.3(b).) 

\vspace{6pt}

$3^o$ The moments of the free unitary Brownian motion $u_t$
are real numbers (as reviewed at the beginning of the preceding
proof).  This has the consequence that $u_t^{*}$ can also serve 
as free unitary Brownian motion at time $t$, which in turn 
implies that
\[
\kappa_n \bigl( \, u_t^{\omega (1)}, \ldots , 
                   u_t^{\omega (n)} \, \bigr) =
\kappa_n \bigl( \, u_t^{\omega' (1)}, \ldots , 
                   u_t^{\omega' (n)} \, \bigr),
\ \ \forall \, t \in [ 0, \infty ),
\]
with $\omega '$ obtained out of $\omega \in \{ 1,* \}^n$ 
by swapping the roles of $1$ and $*$ (every $\omega (i)$ which 
is a $1$ is replaced by a $*$, and vice-versa).  The uniqueness 
property of $Z_{\omega}$ thus implies that 
$Z_{\omega} = Z_{\omega '}$ in this situation, too.
\end{remark}

$\ $

We next put into evidence a very useful recursion satisfied by 
the polynomials $Z_{\omega}$.  This is done in Proposition 
\ref{prop:3.4}; the essence of 
the argument is a calculation which holds for any unitary in 
a $*$-probability space, and is presented in the next lemma.

$\ $

\begin{lemma}   \label{lemma:3.3}
Let $( \cA , \varphi )$ be a $*$-probability space, and let 
$u \in \cA$ be a unitary element.  Consider a string
$\omega = ( \omega (1), \ldots , \omega (n)) \in \{ 1,* \}^n$
with $n \geq 3$ and where $\omega (1) = 1$, $\omega (n) = *$.  
Then
\begin{equation}  \label{eqn:33a}
\kappa_n 
\bigl( \, u^{\omega (1)}, \ldots , u^{\omega (n)} \, \bigr) 
= - \sum_{m=1}^{n-1}  \kappa_m 
\bigl( \, u^{\omega (1)}, \ldots , u^{\omega (m)} \, \bigr)
\cdot  \kappa_{n-m}
\bigl( \, u^{\omega (m+1)}, \ldots , u^{\omega (n)} \, \bigr) 
\end{equation}
(where $\kappa_n, \kappa_m, \kappa_{n-m}$ denote free cumulant 
functionals for $( \cA , \varphi )$).
\end{lemma}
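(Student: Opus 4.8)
The plan is to use the Krawczyk--Speicher formula (\ref{eqn:24b}) for free cumulants with products as entries, applied in a way that reconstructs $u^{\omega(1)}\cdots u^{\omega(n)}$ from a longer word. The key observation is that $u$ is unitary, so $u^{\omega(1)} u^{\omega(2)} = (u^{\omega(1)} u) (u^{*} u^{\omega(2)})$ when $\omega(1)=1$, $\omega(2) = *$ --- more to the point, since $\omega(1)=1$ and $\omega(n)=*$, we have $u^{\omega(n)} u^{\omega(1)} = u^{*} u = 1_{\cA}$. So I would consider the word of length $n$ in which we insert, cyclically, a factor $u^{*} u$ between position $n$ and position $1$; equivalently, form the length-$(n+1)$ tuple obtained by viewing $u^{\omega(1)}\cdots u^{\omega(n)}$ as a product of $n$ consecutive pieces of a length-$n$ string of unitaries, one of whose pieces is a product of two letters.

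Concretely, first I would apply (\ref{eqn:24b}) with $n$ replaced by $n$ and with the interval partition $\sigma$ having blocks $J_1 = \{1\}, \ldots, J_{n-2} = \{n-2\}, J_{n-1} = \{n-1, n\}$; this writes
\[
\kappa_{n-1}\bigl( u^{\omega(1)}, \ldots, u^{\omega(n-2)}, u^{\omega(n-1)} u^{\omega(n)} \bigr)
= \sum_{\substack{\pi \in NC(n) \\ \pi \vee \sigma = 1_n}} \ \prod_{V \in \pi} \kappa_{|V|}\bigl( (u^{\omega(1)}, \ldots, u^{\omega(n)}) \mid V \bigr).
\]
The partition $1_n$ itself contributes the term $\kappa_n(u^{\omega(1)}, \ldots, u^{\omega(n)})$, which is what we want on the left of (\ref{eqn:33a}). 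So I would rearrange to isolate that term and show that the remaining sum over $\{\pi \neq 1_n : \pi \vee \sigma = 1_n\}$ equals the right-hand side of (\ref{eqn:33a}) up to sign. The condition $\pi \vee \sigma = 1_n$ with $\pi \ne 1_n$ forces $\pi$ to have exactly two blocks, one of which is an initial segment $\{1,\ldots,m\}$ and the other its complement $\{m+1,\ldots,n\}$, with $1 \le m \le n-1$ --- this is a standard and easy analysis of which two-block non-crossing partitions join with the ``glue one adjacent pair'' interval partition to give $1_n$. That yields exactly $\sum_{m=1}^{n-1} \kappa_m(u^{\omega(1)},\ldots,u^{\omega(m)}) \cdot \kappa_{n-m}(u^{\omega(m+1)},\ldots,u^{\omega(n)})$ as the correction term.

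It then remains to show the left-hand side of the displayed identity above \emph{vanishes}: $\kappa_{n-1}(u^{\omega(1)}, \ldots, u^{\omega(n-2)}, u^{\omega(n-1)} u^{\omega(n)}) = 0$ when $n \ge 3$. But here I would use cyclic invariance of cumulants (Remark \ref{rem:2.4}.3(a)): reorder so that the two-letter slot is at the front, giving $\kappa_{n-1}(u^{\omega(n-1)} u^{\omega(n)}, u^{\omega(1)}, \ldots, u^{\omega(n-2)})$; actually the cleanest route is to rotate the \emph{original} length-$n$ cumulant first. Since $\omega(n) = *$ and $\omega(1) = 1$, cyclic invariance gives $\kappa_n(u^{\omega(1)},\ldots,u^{\omega(n)}) = \kappa_n(u^{\omega(n)}, u^{\omega(1)}, u^{\omega(2)}, \ldots, u^{\omega(n-1)}) = \kappa_n(u^{*}, u, u^{\omega(2)}, \ldots, u^{\omega(n-1)})$, and now applying (\ref{eqn:24b}) with the interval partition gluing the first two slots produces $\kappa_{n-1}(u^{*} u, u^{\omega(2)}, \ldots, u^{\omega(n-1)}) = \kappa_{n-1}(1_{\cA}, u^{\omega(2)}, \ldots, u^{\omega(n-1)})$, which is $0$ because $n-1 \ge 2$ and a free cumulant of order $\ge 2$ with a scalar (unit) entry vanishes. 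Unwinding this correctly --- i.e. doing the rotation on the length-$n$ cumulant, not on the contracted one --- is the step to be careful about, since one must make sure the ``correction'' two-block partitions are read off relative to the rotated string and then rotated back; this bookkeeping, rather than any deep fact, is the only real obstacle, and it is routine.
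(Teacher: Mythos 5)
Your proposal is correct and, once you discard the initial variant (gluing $\{n-1,n\}$ without rotating, whose left-hand side $\kappa_{n-1}\bigl(u^{\omega(1)},\ldots,u^{\omega(n-1)}u^{\omega(n)}\bigr)$ need not vanish since $u^{\omega(n-1)}u^{\omega(n)}$ is not $1_{\cA}$ unless $\omega(n-1)=1$), it coincides with the paper's proof: rotate so that $u^{*},u$ occupy the first two slots, apply the product formula with $\sigma$ gluing $\{1,2\}$, observe that $\kappa_{n-1}(u^{*}u,\ldots)=\kappa_{n-1}(1_{\cA},\ldots)=0$, and identify the two-block contributions (which in the rotated picture are $\{2,\ldots,m+1\}$ together with $\{1\}\cup\{m+2,\ldots,n\}$, becoming your initial-segment splittings after undoing the rotation). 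The one point worth making explicit is that the cyclic invariance you invoke requires $\varphi$ to be tracial on the relevant entries, which the paper secures at the outset by replacing $\cA$ with the $*$-algebra generated by $u$.
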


\begin{proof}  
We may assume (by replacing $\cA$ with the $*$-algebra generated 
by $u$) that $( \cA , \varphi )$ is tracial.  In particular, we 
can write
\[
\kappa_n 
\bigl( \, u^{\omega (1)}, \ldots , u^{\omega (n)} \, \bigr) 
= \kappa_n
\bigl( \, u^{\omega (n)}, u^{\omega (1)}, \ldots , 
u^{\omega (n-1)} \, \bigr) 
= \kappa_n
\bigl( \, u^{*}, u, u^{\omega (2)}, \ldots , 
u^{\omega (n-1)} \, \bigr) .
\]

Now, we know that 
$\kappa_{n-1} \bigl( \, u^{*} u, u^{\omega (2)}, \ldots , 
u^{\omega (n-1)} \, \bigr) = 0$
(a free cumulant of length $\geq 2$ always vanishes when one of 
its entries is equal to $1_{ { }_{\cA} }$).  On the other hand, 
the formula (\ref{eqn:24b}) for cumulants with products as 
entries gives
\begin{equation}  \label{eqn:33b}
\kappa_{n-1}
\bigl( \, u^{*} u, u^{\omega (2)}, \ldots , 
u^{\omega (n-1)} \, \bigr) 
\end{equation}
\[
= \sum_{ \begin{array}{c}
{\scriptstyle \pi \in NC(n) \ such}  \\
{\scriptstyle that \ \pi \vee \sigma = 1_{n}}
\end{array}  } \ \prod_{V \in \pi}  \kappa_{|V|} 
\bigl( \, (u^{*}, u, u^{\omega (2)}, \ldots , 
u^{\omega (n-1)}) \mid V \, \bigr),
\]
where $\sigma \in NC(n)$ is the partition consisting of the 
2-element block $\{ 1, 2 \}$ and of $n-2$ blocks with one element.

Let $\pi \in NC(n)$ be such that $\pi \vee \sigma = 1_{n}$, and 
let $V'$ and $V''$ be the blocks of $\pi$ which contain $1$ and $2$, 
respectively.  We observe that $V' \cup V'' = \{ 1, \ldots ,n \}$; 
indeed, in the opposite case we could consider the partition 
$\widetilde{\pi} \in NC(n)$ which is obtained from $\pi$ by 
joining together the blocks $V$ and $V'$, and\footnote{ The partition $\widetilde{\pi}$ thus consists of 
$V' \cup V''$ and of all the blocks $V \in \pi$ such that 
$V \neq V', V''$.  The detail which prevents $\widetilde{\pi}$ 
from having crossings is that $V'$ and $V''$ contain the adjacent 
points $1$ and $2$.}
this $\widetilde{\pi}$ would satisfy 
$\pi, \sigma \leq \widetilde{\pi} \neq 1_{n}$, in contradiction
with the assumption that $\pi \vee \sigma = 1_{n}$.  If $V' = V''$ 
then $\pi = 1_n$.  If $V' \neq V''$ then either $V' = \{ 1 \}$, 
$V'' = \{ 2,3, \ldots ,n \}$, or $V''$ is nested inside $V'$.  In the 
latter case, denoting $| V'' | =: m$, we find that 
$V'' = \{ 2, \ldots , m+1 \}$ and 
$V' = \{ 1 \} \cup \{ m+2, \ldots , n \}$, where $1 \leq m \leq n-2$.

The conclusion of the discussion in the preceding paragraph is that
the sum on the right-hand side of (\ref{eqn:33b}) can be written 
explicitly as 
\begin{equation}  \label{eqn:33c}
\kappa_n
\bigl( \, u^{*}, u, u^{\omega (2)}, \ldots , 
u^{\omega (n-1)} \, \bigr)  
+ \sum_{m=1}^n \ \kappa_{\pi_m} 
\bigl( \, u^{*}, u, u^{\omega (2)}, \ldots , 
u^{\omega (n-1)} \, \bigr),
\end{equation}  
with $\pi_m = \{ \, \{ 2, \ldots , m+1 \} , \, 
\{ 1, \ldots ,n \} \setminus \{ 2, \ldots , m+1 \} \, \}$,
$1 \leq m \leq n-1$.  It is immediately seen (by doing the 
suitable cyclic permutation of entries in
$\kappa_{n-m} \bigl( \, u^{*}, u^{\omega (m+1)}, \ldots , 
u^{\omega (n-1)} \, \bigr)$) that for every 
$1 \leq m \leq n-1$ one has
\[
\kappa_{\pi_m} 
\bigl( \, u^{*}, u, u^{\omega (2)}, \ldots , 
u^{\omega (n-1)} \, \bigr)
= \kappa_m 
\bigl( \, u^{\omega (1)}, \ldots , u^{\omega (m)} \, \bigr)
\cdot  \kappa_{n-m}
\bigl( \, u^{\omega (m+1)}, \ldots , u^{\omega (n)} \, \bigr).
\]
But the sum in (\ref{eqn:33c}) is equal to $0$ (since it started 
as an expansion for $\kappa_{n-1} (u^{*}u, \ldots ) = 0$), and 
the statement of the lemma follows.
\end{proof}

$\ $

\begin{proposition}   \label{prop:3.4}
Suppose that $n \geq 3$ and that 
$\omega = ( \omega (1), \ldots , \omega (n)) \in \{ 1,* \}^n$
has $\omega (1) = 1$ and $\omega (n) = *$.  Then it follows that 
\begin{equation}  \label{eqn:34a}
Z_{\omega} = - \sum_{m=1}^{n-1}
Z_{ ( \omega (1), \ldots , \omega (m) )} \cdot
Z_{ ( \omega (m+1), \ldots , \omega (n) )}
\end{equation}
(equality of polynomials in two variables).
\end{proposition}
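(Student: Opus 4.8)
The plan is to deduce this polynomial identity from its numerical counterpart, namely from the cumulant recursion already established in Lemma \ref{lemma:3.3}. First I would fix $t \in [0, \infty )$ and apply Lemma \ref{lemma:3.3} to the unitary $u_t$ in the tracial $*$-probability space $( \cA_t , \varphi_t )$: the string $\omega$ satisfies exactly the hypotheses required there ($n \geq 3$, $\omega (1) = 1$, $\omega (n) = *$), so we obtain
\[
\kappa_n \bigl( \, u_t^{\omega (1)}, \ldots , u_t^{\omega (n)} \, \bigr)
= - \sum_{m=1}^{n-1} \kappa_m \bigl( \, u_t^{\omega (1)}, \ldots , u_t^{\omega (m)} \, \bigr)
\cdot \kappa_{n-m} \bigl( \, u_t^{\omega (m+1)}, \ldots , u_t^{\omega (n)} \, \bigr).
\]

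Next I would rewrite every cumulant appearing here by means of Proposition \ref{prop:3.1}. Applied to the full string $\omega$, it turns the left-hand side into $Z_{\omega} (t, e^{-t/2})$; applied to each substring $( \omega (1), \ldots , \omega (m) )$ and $( \omega (m+1), \ldots , \omega (n) )$ --- which have lengths between $1$ and $n-1$, all covered since Proposition \ref{prop:3.1} is stated for every length $\geq 1$ --- it turns the $m$-th summand on the right into $Z_{( \omega (1), \ldots , \omega (m) )} (t, e^{-t/2}) \cdot Z_{( \omega (m+1), \ldots , \omega (n) )} (t, e^{-t/2})$. Setting $P := - \sum_{m=1}^{n-1} Z_{( \omega (1), \ldots , \omega (m) )} \cdot Z_{( \omega (m+1), \ldots , \omega (n) )}$, which is itself a polynomial in $\bQ [x,y]$ (products and sums of polynomials), we thus get $Z_{\omega} (t, e^{-t/2}) = P(t, e^{-t/2})$ for all $t \in [0, \infty )$.

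Finally I would invoke the same uniqueness principle used in the proof of Proposition \ref{prop:3.1}: a polynomial in $\bQ [x,y]$ is determined by its values on the curve $\{ (t, e^{-t/2}) \mid t \in [0, \infty ) \}$. Since $Z_{\omega}$ and $P$ agree along this curve, they coincide as polynomials, which is precisely the asserted identity (\ref{eqn:34a}).

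I do not expect a genuine obstacle here: the substantive computation is entirely contained in Lemma \ref{lemma:3.3}, and what remains is only the routine passage from a family of numerical identities (one for each $t$) to a single identity of polynomials. The one bookkeeping point worth flagging is that the cases $m = 1$ and $m = n-1$ bring in the single-letter strings $(1)$ and $(*)$, so one should note explicitly that Proposition \ref{prop:3.1} already supplies $Z_{(1)}$ and $Z_{(*)}$ (with $Z_{(1)}(x,y) = Z_{(*)}(x,y) = y$, matching $\varphi_t(u_t) = e^{-t/2}$).
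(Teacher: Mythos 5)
Your proposal is correct and follows essentially the same route as the paper: the paper also applies Lemma \ref{lemma:3.3} to $u_t$, identifies both sides with polynomial values at $(t, e^{-t/2})$ via Proposition \ref{prop:3.1}, and concludes by the uniqueness of a polynomial in $\bQ[x,y]$ determined by its values along that curve. Your extra remark about the single-letter strings $(1)$ and $(*)$ is harmless bookkeeping that the paper leaves implicit.
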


\begin{proof}
Let $Z \in \bQ [x,y]$ be the polynomial which appears on the 
right-hand side of (\ref{eqn:34a}).  By using 
Lemma \ref{lemma:3.3} one immediately sees that 
$Z(t, e^{-t/2}) = \kappa_n 
\bigl( \, u_t^{\omega (1)}, \ldots , u_t^{\omega (n)} \, \bigr),
\ \ t \in [0, \infty )$,
and this implies $Z = Z_{\omega}$.
\end{proof}

$\ $

As an application of Proposition \ref{prop:3.4} we show the 
following: the number of relevant terms (counting from the top) 
in the expansion given for $Z_{\omega}$ in Equation (\ref{eqn:31b})
is limited by how many times we 
switch between the symbols `$1$' and `$*$', upon going around the
string $\omega$.  For instance if $\omega = (1,1, \ldots , 1)$
then the expansion (\ref{eqn:31b}) amounts to just 
$Z_{\omega} (x,y) = Z_{\omega}^{(n)} (x) \cdot y^n$,
if $\omega = (1,\ldots , 1,*, \ldots , *)$
then $Z_{\omega} (x,y) = Z_{\omega}^{(n)} (x) \cdot y^n
+ Z_{\omega}^{(n-2)} (x) \cdot y^{n-2}$, and so on.  This fact is
stated precisely in Theorem \ref{thm:3.7} below.  In order to come
to it, we first record the (natural) definition for what is the 
``number of switches between $1$ and $*$'' in a string $\omega$.

$\ $

\begin{def-and-remark}  \label{def:3.5}
For every $n \in \bN$ and $\omega \in \{ 1, * \}^n$ we 
define the {\em switch-number} of $\omega$ to be
\begin{equation}  \label{eqn:35a}
\switch ( \omega ) 
:= \deltabar_{\omega (n), \omega (1)} 
+  \deltabar_{\omega (1), \omega (2)} + \cdots 
+  \deltabar_{\omega (n-1), \omega (n)},
\end{equation}
where the $\deltabar$'s on the right-hand side of the 
equation are assigned by putting
\[
\deltabar_{1,*} = \deltabar_{*,1} = 1 \mbox{ and }
\deltabar_{1,1} = \deltabar_{*,*} = 0.
\]
It is easily seen that 
$\switch ( \omega )$ is an even integer such that 
$0 \leq \switch ( \omega ) \leq n$.  Another immediate 
observation is that 
$\switch ( \omega ) = \switch ( \omega ' )$ whenever
$\omega '$ is obtained out of $\omega$ via one of the 
transformations discussed in Remark \ref{rem:3.2}.
\end{def-and-remark}

$\ $

\begin{lemma}   \label{lemma:3.6}
Let $n \geq 3$ be an integer, and let 
$\omega = ( \omega (1), \ldots , \omega (n)) \in \{ 1,* \}^n$
be such that $\omega (1) = 1$, $\omega (n) = *$.  Let 
$m$ be a number in $\{ 1, \ldots, n-1 \}$, and consider the 
strings 
\[
\omega ' := 
( \omega (1), \ldots , \omega (m)) \in \{ 1,* \}^m, \ \ 
\omega '' := 
( \omega (m+1), \ldots , \omega (n)) \in \{ 1,* \}^{n-m}.
\]
Then we have
\begin{equation}  \label{eqn:36a}
\switch ( \omega ' ) + \switch ( \omega '' ) 
\leq  \switch ( \omega ). 
\end{equation}
\end{lemma}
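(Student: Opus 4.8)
The plan is to compute $\switch(\omega)$, $\switch(\omega')$ and $\switch(\omega'')$ directly from the defining formula (\ref{eqn:35a}) and to observe that, once the bulk of the $\deltabar$-terms cancel, the inequality (\ref{eqn:36a}) collapses to a single elementary inequality among four $\deltabar$-symbols.

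First I would expand all three switch-numbers. Writing out (\ref{eqn:35a}) for the two pieces gives $\switch(\omega') = \deltabar_{\omega(m),\omega(1)} + \sum_{i=1}^{m-1}\deltabar_{\omega(i),\omega(i+1)}$ and $\switch(\omega'') = \deltabar_{\omega(n),\omega(m+1)} + \sum_{i=m+1}^{n-1}\deltabar_{\omega(i),\omega(i+1)}$, while $\switch(\omega) = \deltabar_{\omega(n),\omega(1)} + \sum_{i=1}^{n-1}\deltabar_{\omega(i),\omega(i+1)}$. The two partial internal sums (over $1 \le i \le m-1$ and over $m+1 \le i \le n-1$) together account for the full internal sum $\sum_{i=1}^{n-1}$ except for the single term with $i = m$. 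Subtracting therefore yields
\[
\switch(\omega) - \switch(\omega') - \switch(\omega'')
= \deltabar_{\omega(m),\omega(m+1)} + \deltabar_{\omega(n),\omega(1)} - \deltabar_{\omega(m),\omega(1)} - \deltabar_{\omega(n),\omega(m+1)},
\]
so (\ref{eqn:36a}) is equivalent to the right-hand side being $\ge 0$. I would note in passing that this identity remains valid in the boundary cases $m = 1$ and $m = n-1$, where one of $\omega',\omega''$ has length $1$ and its switch-number is the single diagonal term $\deltabar_{a,a} = 0$ (the corresponding internal sum being empty).

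Next I would bring in the hypotheses $\omega(1) = 1$ and $\omega(n) = *$, which give $\deltabar_{\omega(n),\omega(1)} = 1$, together with the fact that $\{1,*\}$ has only two elements, so $\omega(m)$ must equal either $\omega(1)$ or $\omega(n)$. If $\omega(m) = \omega(1)$, then $\deltabar_{\omega(m),\omega(1)} = 0$ and the quantity above equals $\deltabar_{\omega(1),\omega(m+1)} + \deltabar_{\omega(n),\omega(1)} - \deltabar_{\omega(n),\omega(m+1)}$, which is nonnegative since $\deltabar$, viewed as the discrete metric on the two-point set $\{1,*\}$, satisfies the triangle inequality $\deltabar_{\omega(n),\omega(m+1)} \le \deltabar_{\omega(n),\omega(1)} + \deltabar_{\omega(1),\omega(m+1)}$. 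If instead $\omega(m) = \omega(n)$, then $\deltabar_{\omega(m),\omega(m+1)} = \deltabar_{\omega(n),\omega(m+1)}$ and $\deltabar_{\omega(m),\omega(1)} = \deltabar_{\omega(n),\omega(1)}$, so the quantity is exactly $0$. In either case (\ref{eqn:36a}) follows. (Equivalently, one may simply run through the four cases $\omega(m),\omega(m+1)\in\{1,*\}$ by hand.)

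I do not expect any genuine obstacle: the argument is pure bookkeeping followed by a two-point triangle inequality. The only mild care needed is to state the cancellation identity correctly --- in particular, that it is precisely the $i=m$ internal term that survives, and that the two wraparound terms $\deltabar_{\omega(m),\omega(1)}$ and $\deltabar_{\omega(n),\omega(m+1)}$ enter with a minus sign --- and to check that the length-one boundary cases for $\omega'$ or $\omega''$ cause no trouble, as noted above.
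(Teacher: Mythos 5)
Your proof is correct and uses the same decomposition as the paper: both arguments cancel the common $\deltabar$-terms to reduce the inequality to showing that
$\deltabar_{\omega(m),\omega(m+1)} + \deltabar_{\omega(n),\omega(1)} - \deltabar_{\omega(m),\omega(1)} - \deltabar_{\omega(n),\omega(m+1)} \geq 0$.
The only difference is in the closing step: you finish by a direct case analysis (equivalently, the triangle inequality for the discrete metric on $\{1,*\}$), whereas the paper observes that $\deltabar_{\omega(n),\omega(1)}=1$ forces the difference to be $\geq -1$ and then invokes the evenness of switch-numbers to upgrade this to $\geq 0$; both finishes are valid, and your explicit treatment of the boundary cases $m=1$, $m=n-1$ is a welcome addition, since the paper leaves those as ``analogous and simpler.''
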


\begin{proof} We will prove the required inequality by 
assuming that $1 < m < n-1$ (the cases when $m=1$ or $m=n-1$
are analogous, and simpler).  Look at the difference
\[
\switch ( \omega ) - \Bigl( \,
\switch ( \omega ' ) + \switch ( \omega '' )  \, \Bigr).
\]
By cancelling common terms in the expressions which define
these three switch-numbers, we find that the above difference 
is equal to 
\[
\bigl( \, \deltabar_{\omega (m), \omega (m+1)} 
+ \deltabar_{\omega (n), \omega (1)} \, \bigr) \, - \,
\bigl( \, \deltabar_{\omega (m), \omega (1)} 
+  \deltabar_{\omega (n), \omega (m+1)} \, \bigr).
\]
Since $\deltabar_{\omega (n), \omega (1)} =1$ (while the other 
$\deltabar$'s appearing above are $0$ or $1$), we get that
\[
\switch ( \omega ) - \Bigl( \,
\switch ( \omega ' ) + \switch ( \omega '' )  \, \Bigr) \geq -1.
\]
But switch-numbers are always even; so in the latter 
inequality we are actually forced to have ``$\geq 0$'', and 
(\ref{eqn:36a}) follows.
\end{proof}

$\ $

\begin{theorem}  \label{thm:3.7}
Let $n$ be a positive integer, and let $\omega$ be a string in 
$\{ 1,* \}^n$.  Consider the polynomial $Z_{\omega} (x,y)$ and 
its expansion as sum of terms 
$Z_{\omega}^{(n-2j)} (x) \cdot y^{n-2j}$, with $0 \leq j \leq n/2$,
which was obtained in Proposition \ref{prop:3.1}.  One has
\begin{equation}  \label{eqn:37a}
Z_{\omega}^{(n-2j)} = 0 \ \mbox{ whenever } 
2j > \switch ( \omega ).
\end{equation}
\end{theorem}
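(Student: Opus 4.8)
The plan is to prove the statement by induction on $n$, using the recursion from Proposition \ref{prop:3.4} together with the subadditivity of the switch-number from Lemma \ref{lemma:3.6}. To set things up, it is convenient to reformulate the claim in terms of $y$-degrees: for a nonzero polynomial $P \in \bQ[x,y]$ in which every power of $y$ that appears has the same parity, let $\lfloor P \rfloor_y$ denote its lowest $y$-degree. Since $Z_\omega(x,y) = \sum_{0 \le j \le n/2} Z_\omega^{(n-2j)}(x)\, y^{n-2j}$ has only powers of $y$ of parity $n$, the assertion $Z_\omega^{(n-2j)} = 0$ for all $2j > \switch(\omega)$ is exactly the statement that the lowest $y$-degree appearing in $Z_\omega$ is at least $n - \switch(\omega)$; equivalently, $Z_\omega(x,y)$ is divisible (as a polynomial) by $y^{\,n - \switch(\omega)}$.

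First I would dispose of the base cases and the reduction to a normal form for $\omega$. For $n = 1$ the only strings are $(1)$ and $(*)$, with $\switch = 0$ and $Z_\omega(x,y) = y$ (from Equation (\ref{eqn:31e}), since $M_{(1)} = M_{(*)} = y$), so the claim holds. For $n = 2$ the strings $(1,1),(*,*)$ have $\switch = 0$ and $Z = y^2$, while $(1,*),(*,1)$ have $\switch = 2$ and the claim is vacuous. For the inductive step with $n \ge 3$, observe by Remark \ref{rem:3.2} that $Z_\omega$ is invariant under cyclic rotation, order reversal, and the $1 \leftrightarrow *$ swap, and $\switch$ is invariant under the same operations (Definition and Remark \ref{def:3.5}); hence it suffices to treat two cases according to $\switch(\omega)$. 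If $\switch(\omega) = 0$ then $\omega$ is constant, and by the $1 \leftrightarrow *$ symmetry we may assume $\omega = (1,1,\ldots,1)$, for which Equation (\ref{eqn:1gg}) gives $Z_\omega(x,y) = \frac{(-n)^{n-1}}{n!} x^{n-1} y^n$, which is divisible by $y^n = y^{\,n-\switch(\omega)}$, as required. If $\switch(\omega) \ge 2$, then $\omega$ is non-constant, so after a cyclic rotation we may assume $\omega(1) = 1$ and $\omega(n) = *$, which puts us in the situation of Proposition \ref{prop:3.4}.

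Now comes the core of the argument. With $\omega(1) = 1$, $\omega(n) = *$ and $n \ge 3$, Proposition \ref{prop:3.4} gives
\[
Z_\omega = -\sum_{m=1}^{n-1} Z_{\omega'_m} \cdot Z_{\omega''_m},
\qquad
\omega'_m := (\omega(1),\ldots,\omega(m)),\ \ \omega''_m := (\omega(m+1),\ldots,\omega(n)).
\]
For each $m$, the strings $\omega'_m$ and $\omega''_m$ have lengths $m$ and $n-m$, both strictly less than $n$, so the inductive hypothesis applies: $y^{\,m - \switch(\omega'_m)}$ divides $Z_{\omega'_m}$ and $y^{\,(n-m) - \switch(\omega''_m)}$ divides $Z_{\omega''_m}$. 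Hence the product $Z_{\omega'_m} \cdot Z_{\omega''_m}$ is divisible by $y^{\,n - \switch(\omega'_m) - \switch(\omega''_m)}$, and by Lemma \ref{lemma:3.6} we have $\switch(\omega'_m) + \switch(\omega''_m) \le \switch(\omega)$, so $n - \switch(\omega'_m) - \switch(\omega''_m) \ge n - \switch(\omega)$. Therefore every summand on the right-hand side is divisible by $y^{\,n - \switch(\omega)}$, and so is their sum $Z_\omega$; translating back, $Z_\omega^{(n-2j)} = 0$ whenever $n - 2j < n - \switch(\omega)$, i.e.\ whenever $2j > \switch(\omega)$. This completes the induction.

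The one point that needs a small amount of care — and where I expect the only real friction — is making sure the induction is clean when $\switch(\omega) = 2$, because then Lemma \ref{lemma:3.6} only gives $\switch(\omega'_m) + \switch(\omega''_m) \le 2$, so the divisibility exponent $n - \switch(\omega'_m) - \switch(\omega''_m)$ could, summand by summand, be as large as $n$ but never smaller than $n - 2$; one should double-check that this is exactly the bound claimed (namely that $Z_\omega$ has no $y^{\,n-2j}$ term for $j \ge 2$, which is what $\switch(\omega) = 2$ predicts), rather than accidentally claiming something stronger. Relatedly, one must confirm that the parity bookkeeping is consistent: by Proposition \ref{prop:3.1} only $y$-powers of parity $n$ occur in $Z_\omega$, and $n - \switch(\omega)$ has the same parity as $n$ since $\switch(\omega)$ is even, so "divisible by $y^{\,n-\switch(\omega)}$" and "$Z_\omega^{(n-2j)} = 0$ for $2j > \switch(\omega)$" are genuinely equivalent — there is no off-by-one hiding in the parity. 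Beyond that, the argument is purely formal and the induction goes through without obstruction.
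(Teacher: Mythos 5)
Your proof is correct and follows essentially the same route as the paper's: induction on $n$, reduction to the case $\omega(1)=1$, $\omega(n)=*$ via cyclic rotation, and then the recursion of Proposition \ref{prop:3.4} combined with the subadditivity of $\switch$ from Lemma \ref{lemma:3.6}; your "divisibility by $y^{\,n-\switch(\omega)}$" phrasing is just a repackaging of the paper's coefficient-of-$y^{n-2j}$ extraction. (One trivial slip: $Z_{(1,1)}(x,y) = -x y^2$ rather than $y^2$, but this does not affect the argument, which only needs divisibility by $y^2$.)
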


\begin{proof}  We first observe that the statement 
of the theorem holds when $\omega$ is of the form 
$(1,1, \ldots , 1)$ or $(*,*, \ldots , *)$.  In this case 
we have $\switch ( \omega ) = 0$; so Equation (\ref{eqn:37a})
says that $Z_{\omega}^{(n-2j)} = 0$ for every $j \neq 0$,
i.e.~that 
$Z_{\omega} (x,y) = Z_{\omega}^{(n)} (x) \cdot y^n$.
This is indeed true, as noticed in Equation (\ref{eqn:1gg}) of 
the introduction.

We now prove by induction on $n$ 
that the statement of the theorem holds for general 
strings $\omega \in \{ 1,* \}^n$.  The case $n=1$
is included in the preceding paragraph.  Let us also verify the 
case $n=2$.  In this case, the strings $(1,1)$ and $(*,*)$
are covered by the preceding paragraph, while the strings 
$(1,*)$ and $(*,1)$ have switch-number equal to $2$ --- so for 
the latter two strings, Equation (\ref{eqn:37a}) is fulfilled 
vacuously (there is no $j$ in the range $0 \leq j \leq n/2$ such 
that $2j > \switch ( \omega )$).

In the remaining part of the proof we do the induction step:
we fix an integer $n \geq 3$, we assume that the statement of the
theorem holds for strings of length $\leq n-1$, and we will prove 
that it also holds for strings of length $n$. 

So let us also fix an
$\omega = ( \omega (1), \ldots , \omega (n))$ 
in $\{ 1,* \}^n$, for which we will verify that (\ref{eqn:37a}) 
holds.  We distinguish three cases.

\vspace{6pt}

{\em Case 1.} $\omega = (1,1, \ldots, 1)$ or 
$\omega = (*,*, \ldots ,*)$.

This case was verified in the first paragraph of the proof.

\vspace{6pt}

{\em Case 2.} $\omega$ is such that $\omega (1) = 1$ and 
$\omega (n) = *$.  

Consider a $j \in \bN$ such that $0 \leq j \leq n/2$ and such 
that $2j > \switch ( \omega )$.  (We assume that such $j$'s exist,
otherwise there is nothing to prove.)  We are in a situation 
where we can invoke Proposition \ref{prop:3.4}.  By extracting 
the coefficient of $y^{n-2j}$ on both sides of the recursion 
provided by that proposition, we find that
\begin{equation}  \label{eqn:37b}
Z_{\omega}^{(n-2j)} = - \sum_{m=1}^{n-1} \
\sum_{ \begin{array}{c}
{\scriptstyle 0 \leq k \leq m/2, \ 0 \leq \ell \leq (n-m)/2}  \\
{\scriptstyle such \ that \ k + \ell = j}
\end{array} } \ \
Z_{ ( \omega (1), \ldots , \omega (m) ) }^{(m-2k)} \cdot
Z_{ ( \omega (m+1), \ldots , \omega (n) ) }^{(n-m-2 \ell)} 
\end{equation}
(equality of polynomials in $\bQ [x]$).
We will show that $Z_{\omega}^{(n-2j)} = 0$ by verifying
that every term in the double sum on the right-hand side of 
(\ref{eqn:37b}) is the zero polynomial.  Indeed, let us 
pick such a term (indexed by an $m$, and then by a pair 
$(k, \ell )$), and let us denote
\[
\omega ' := 
( \omega (1), \ldots , \omega (m)) \in \{ 1,* \}^m, \ \ 
\omega '' := 
( \omega (m+1), \ldots , \omega (n)) \in \{ 1,* \}^{n-m}.
\]
We have 
$2k + 2 \ell = 2j > \switch ( \omega ) 
\geq \switch ( \omega ' ) + \switch ( \omega '')$
(where at the second inequality we use Lemma \ref{lemma:3.6}).
So either $2k > \switch ( \omega ')$ or
$2 \ell > \switch ( \omega '')$, and the induction hypothesis
gives us that either
$Z_{ ( \omega (1), \ldots , \omega (m) ) }^{(m-2k)} = 0$ or
$Z_{ ( \omega (m+1), \ldots , \omega (n) ) }^{(n+m-2 \ell)} = 0$.
Either way, the product of the latter two polynomials is 
$0$, and this completes the verification of Case 2.

\vspace{6pt}

{\em Case 3.} $\omega$ does not fall in either Case 1 
or Case 2 above.

Since $\omega$ is not in Case 1, both symbols $1$ and $*$
must appear among its components.  It is then easy to see that 
there exists a string $\omega '$ obtained from $\omega$ via a 
cyclic permutation of components, such that $\omega ' (1) = 1$ 
and $\omega ' (n) = *$.  The string $\omega '$ has 
$Z_{\omega '} = Z_{\omega}$ (Remark \ref{rem:3.2}.1), and has
$\switch ( \omega ' ) = \switch ( \omega )$ 
(Remark \ref{def:3.5}).  For any $j$ such that 
$2j > \switch ( \omega ) = \switch ( \omega ' )$ we have 
$Z_{\omega'}^{(n-2j)} = 0$, because $\omega '$ falls in the 
Case 2 discussed above.  It follows that 
$Z_{\omega}^{(n-2j)} = 0$ as well.  This concludes the 
verification of the induction step, and the proof of the theorem.
\end{proof}

$\ $

\section{A special case of $Z_{\omega}$'s}

\setcounter{equation}{0}

In the present section we determine what is the polynomial 
$Z_{\omega}$ for a string of the form 
$\omega = (1, \ldots , 1, *, \ldots , *)$, having $k$ 
occurrences of ``$1$'' followed by $\ell$ occurrences of ``$*$'', 
for some $k, \ell \geq 1$.  In this case, Theorem \ref{thm:3.7}
says that the expansion from 
Equation (\ref{eqn:1g}) is reduced to its top two terms:
\[
Z_{\omega} (x,y) = Z_{\omega}^{(k+ \ell) } (x) \, y^{k + \ell} 
+ Z_{\omega}^{(k + \ell -2)} (x) \, y^{k + \ell -2}.
\]
We will retrieve this fact, and we will moreover show how the 
polynomials $Z_{\omega}^{(k+ \ell) } (x)$,
$Z_{\omega}^{(k + \ell -2)} (x)$ can be written 
explicitly as some Laplace transform integrals.  We start with 
a calculation (consequence of the above Lemma \ref{lemma:3.3})
which holds for any unitary in a $*$-probability space. 

$\ $

\begin{lemma}   \label{lemma:4.1}
Let $( \cA , \varphi )$ be a $*$-probability space and let 
$u \in \cA$ be a unitary element.  It makes sense to define 
an analytic function 
$F_u : \{ (z,w) \in \bC^2 \mid \, |z|, |w| < 1/16 \} \to \bC$ 
by putting
\begin{equation}  \label{eqn:41a}
F_u (z,w) := \sum_{k, \ell =1}^{\infty} 
\kappa_{k+ \ell } ( \,  \underbrace{u, \ldots , u}_k 
\, , \, \underbrace{u^{*}, \ldots , u^{*} }_{\ell} \, ) \,
z^k w^{\ell}.
\end{equation}
Moreover, there exists $r \in (0,  1/16)$ such that for 
$|z|, |w| < r$ one has
\begin{equation}  \label{eqn:41b}
F_u (z,w) =  \frac{zw - R_u (z)  R_{u^{*}} (w)}{ 1
+ R_u (z) + R_{u^{*}} (w) } ,
\end{equation}
where $R_u, R_{u^{*}} : \{ z \in \bC \mid \, |z| < 1/16 \} \to \bC$ 
are $R$-transforms (as discussed in Remark \ref{rem:2.4}.4).  
\end{lemma}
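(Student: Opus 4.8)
The plan is to work entirely with the recursion of Lemma \ref{lemma:3.3}, repackaged as an identity of generating functions. First I would dispose of the convergence issue: by the bound \eqref{eqn:25a}, each coefficient $\kappa_{k+\ell}(u,\dots,u,u^*,\dots,u^*)$ is dominated by $16^{k+\ell}$, so the double series \eqref{eqn:41a} converges absolutely and defines an analytic function on the polydisc $\{|z|,|w|<1/16\}$; this is immediate and needs only a sentence. (One may as usual reduce to the tracial case by replacing $\cA$ with the $*$-algebra generated by $u$, which is harmless since only mixed cumulants of $u,u^*$ enter.)

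The heart of the argument is to feed the recursion \eqref{eqn:33a} into the definition of $F_u$. For a string $\omega=(1,\dots,1,*,\dots,*)$ with $k\ge 1$ ones followed by $\ell\ge 1$ stars, we have $\omega(1)=1$, $\omega(n)=*$, so Lemma \ref{lemma:3.3} applies and expresses $\kappa_{k+\ell}(\omega)$ as $-\sum_{m=1}^{k+\ell-1}$ of a product of two shorter cumulants, each again over a consecutive block of the string $\omega$. The key combinatorial observation is that every such prefix $(\omega(1),\dots,\omega(m))$ and suffix $(\omega(m+1),\dots,\omega(n))$ is again of the ``block of $1$'s then block of $*$'s'' shape — a prefix is $(1^{a},*^{b})$ with $a\le k$, $b\le\ell$ (and possibly $b=0$ or $a=0$, i.e. a pure-$1$ or pure-$*$ block), and similarly for the suffix. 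So after multiplying \eqref{eqn:33a} by $z^k w^\ell$ and summing over all $k,\ell\ge 1$, the right-hand side factors into a product of two generating functions. To make the bookkeeping clean I would introduce the ``extended'' functions that also allow pure blocks, namely $G_u(z,w):=zw+z\,R_u(z)+w\,R_{u^*}(w)+\dots$ — more precisely the generating function $\sum_{a,b\ge 0,\ (a,b)\ne(0,0)}\kappa_{a+b}(u^a,(u^*)^b)\,z^a w^b$, which equals $R_u(z)+R_{u^*}(w)+F_u(z,w)$ since the $b=0$ terms give $R_u(z)$, the $a=0$ terms give $R_{u^*}(w)$, and the genuinely mixed terms give $F_u(z,w)$. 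The recursion, summed, then reads schematically
\[
F_u(z,w) \;=\; zw \;-\; \bigl(R_u(z)+R_{u^*}(w)+F_u(z,w)\bigr)\cdot\bigl(R_u(z)+R_{u^*}(w)+F_u(z,w)\bigr)/(\text{correction}),
\]
and the point is to check that the convolution of the two extended series, when one tracks which pieces of the $z^kw^\ell$-coefficient come from which splitting point $m$, collapses exactly to $\bigl(R_u(z)+F_u+R_{u^*}(w)\bigr)^2$ minus the degenerate contributions — I would organize this by writing the bilinear term as $\sum_{a+a'=k}\sum_{b+b'=\ell}$ over the prefix exponents $(a,b)$ and suffix exponents $(a',b')$, noting $a+a'=k$, $b+b'=\ell$, and that the constraint ``$m$ between $1$ and $n-1$'' is automatically handled since at least one of the four numbers is nonzero on each side. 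Solving the resulting quadratic-in-$F_u$ identity for $F_u$ gives \eqref{eqn:41b}: one gets $F_u(1+R_u+R_{u^*}) = zw - R_u(z)R_{u^*}(w)$ after the dust settles, hence $F_u = (zw - R_u R_{u^*})/(1+R_u+R_{u^*})$, valid on a possibly smaller polydisc of radius $r$ where the denominator $1+R_u(z)+R_{u^*}(w)$ does not vanish (it equals $1$ at the origin, so such an $r\in(0,1/16)$ exists by continuity).

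The main obstacle I anticipate is precisely the bookkeeping in the generating-function manipulation of the previous paragraph: Lemma \ref{lemma:3.3} splits the string at a point $m$ into a prefix and a suffix, but to get a clean product of series one must verify that as $k,\ell$ and $m$ range, the pairs (prefix shape, suffix shape) range over \emph{all} pairs of block-shapes $(1^a,*^b)$ and $(1^{a'},*^{b'})$ with $a+a'=k$, $b+b'=\ell$, each appearing exactly once, including the degenerate shapes where one of the four exponents is zero — and that the degenerate cases are exactly what produce the $R_u$, $R_{u^*}$, and $zw$ terms (a block of the form $(1^a)$ with $b=0$ contributing $\kappa_a(u,\dots,u)$, etc.). Getting the signs and the degenerate terms to match up so that $1+R_u(z)+R_{u^*}(w)$ (rather than something messier) appears in the denominator is where the care is needed; everything else — convergence, reduction to the tracial case, and solving the final linear equation for $F_u$ — is routine.
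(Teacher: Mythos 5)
Your overall strategy is the same as the paper's: establish convergence from the bound $16^{k+\ell}$, apply Lemma \ref{lemma:3.3} to the strings $(1,\dots,1,*,\dots,*)$, translate the resulting recursion into an identity of generating functions, and solve for $F_u$. The final identity you write down, $F_u\,(1+R_u+R_{u^*}) = zw - R_u(z)R_{u^*}(w)$, is exactly what the paper proves (it verifies that the coefficient of $z^kw^\ell$ in $F_u(1+R_u+R_{u^*})+R_uR_{u^*}$ is $1$ for $(k,\ell)=(1,1)$ and is the vanishing sum of Lemma \ref{lemma:3.3} otherwise). However, the combinatorial claim at the heart of your bookkeeping is false as stated. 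Lemma \ref{lemma:3.3} splits the string $(1^k,*^\ell)$ at a \emph{single position} $m$, so the prefix must be an initial segment of that specific string: the pairs (prefix shape, suffix shape) that occur are only $\bigl((1^m),(1^{k-m},*^\ell)\bigr)$ for $m\le k$ and $\bigl((1^k,*^{m-k}),(*^{k+\ell-m})\bigr)$ for $m\ge k$ --- that is, pairs with $b=0$ or with $a'=0$. They do \emph{not} range over all $(a,b),(a',b')$ with $a+a'=k$, $b+b'=\ell$. For instance, for $k=\ell=2$ the pair $\kappa_2(u,u^*)\cdot\kappa_2(u,u^*)$ never arises from a splitting of $(1,1,*,*)$, yet it would appear in the coefficient of $z^2w^2$ in $\bigl(R_u+R_{u^*}+F_u\bigr)^2$.

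Consequently the bilinear term is not $\bigl(R_u+R_{u^*}+F_u\bigr)^2$ (up to ``degenerate contributions'') but rather
\[
R_u(z)\,F_u(z,w) \;+\; F_u(z,w)\,R_{u^*}(w) \;+\; R_u(z)\,R_{u^*}(w),
\]
which is \emph{linear} in $F_u$ --- there is no ``quadratic-in-$F_u$ identity'' to solve, and this linearity is precisely why the answer is a clean rational expression with denominator $1+R_u+R_{u^*}$ rather than the root of a quadratic. Your plan as written, if executed with the full square, would produce spurious terms ($F_u^2$, $R_u^2$, $R_{u^*}^2$, and doubled cross terms) and the wrong equation. The fix is easy and brings you back to the paper's argument: track which splittings actually occur (prefix purely of $u$'s, or suffix purely of $u^*$'s, or the single clean split $m=k$), sum the three families separately to get the three products above, and then the identity for $F_u$ falls out linearly. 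Everything else in your proposal (convergence, the reduction to the tracial case, the choice of a smaller radius $r$ where the denominator is nonzero) is fine.
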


\begin{proof}
The fact that $F_u (z,w)$ defined in Equation (\ref{eqn:41a}) is 
well-defined and analytic on 
$\{ (z,w) \in \bC^2 \mid \, |z|, |w| < 1/16 \}$ follows from the 
bound 
\[
\kappa_{k+ \ell } ( \,  \underbrace{u, \ldots , u}_k 
\, , \, \underbrace{u^{*}, \ldots , u^{*} }_{\ell} \, )
\leq 16^{k + \ell}, \ \ k, \ell \in \bN ,
\]
which was mentioned in Remark \ref{rem:2.4}.4.

Let us next consider the analytic function defined on 
$\{ (z,w) \in \bC^2 \mid \, |z|, |w| < 1/16 \}$ by 
\begin{equation}   \label{eqn:41c}
(z,w) \mapsto F_u (z,w) \, ( 1 + R_u (z) + R_{u^{*}} (w) ) 
+ R_u (z)  R_{u^{*}} (w) .
\end{equation}
We will prove that this function is just $(z,w) \mapsto zw$;
the formula claimed in the lemma for $F(z,w)$ will then clearly 
follow (with $r$ picked e.g.~such that 
$| R_u (z) | < 1/2$ for $|z| < r$).

It thus suffices to prove that the coefficient of $z^k w^{\ell}$ 
in the analytic function from (\ref{eqn:41c}) is equal to $1$ for 
$k = \ell =1$, and is equal to $0$ for any $(k, \ell ) \neq (1,1)$ 
in $\bN^2$.  In the special case when $k = \ell =1$, the 
coefficient in question comes out as 
$\kappa_2 ( u, u^{*} ) + \kappa_1 (u) \, \kappa_1 ( u^{*} )$,
which is equal to $\varphi ( uu^{*} )$ by the moment-cumulant 
formula, and thus is indeed equal to $1$.  The cases when 
$(k, \ell ) \neq (1,1)$ are covered by Lemma \ref{lemma:3.3}.
Indeed, let us say for instance that both $k$ and $\ell$ are 
$\geq 2$ (if $k = 1 < \ell$ or if $\ell = 1 < k$ then the argument 
is analogous, and shorter).  Direct inspection shows that the 
coefficient of $z^k w^{\ell}$ in the function from (\ref{eqn:41c}) 
is equal to
\[
\kappa_{k+ \ell } ( \,  \underbrace{u, \ldots , u}_k \, , 
\,  \underbrace{u^{*}, \ldots , u^{*}}_{\ell} \, ) 
+ \sum_{i=1}^{k-1} \kappa_i (u, \ldots , u) \cdot
\kappa_{(k-i)+ \ell} ( \,  \underbrace{u, \ldots , u}_{k-i} \, , 
\,  \underbrace{u^{*}, \ldots , u^{*}}_{\ell} \, )
\]
\[
+ \sum_{j=1}^{\ell -1} 
\kappa_{k+ ( \ell -j)} ( \,  \underbrace{u, \ldots , u}_k \, , 
\,  \underbrace{u^{*}, \ldots , u^{*}}_{\ell -j} \, ) \cdot
\kappa_j (u^{*}, \ldots , u^{*})
+ \kappa_k (u, \ldots , u) \cdot 
  \kappa_{\ell} (u^{*}, \ldots , u^{*}).
\]
But Lemma \ref{lemma:3.3} (used for the string in 
$\{ 1,* \}^{k + \ell}$ which has $k$ occurrences of $1$ followed 
by $\ell$ occurrences of $*$) says precisely that the latter sum 
is equal to $0$.
\end{proof}

$\ $

We now turn to the case of interest, of the free unitary 
Brownian motion.

$\ $

\begin{lemma}  \label{lemma:4.2}
Let us fix $t \in ( 0, \infty )$.  In the framework of 
Lemma \ref{lemma:4.1} let us put $u = u_t$ (free unitary
Brownian motion at time $t$), and let us consider the 
analytic function $F_{u_t} (z,w)$ defined as in Equation
(\ref{eqn:41a}).  Then for $|z|, |w|$ small enough we have
that
\begin{equation}  \label{eqn:42a}
F_{u_t} (z,w) = \frac{1}{t} \cdot \frac{ t^2zw - 
W( te^{-t/2}z ) W( te^{-t/2}w ) }{ t+ 
W( te^{-t/2}z ) + W( te^{-t/2}w ) },
\end{equation}
or equivalently that
\begin{equation}  \label{eqn:42b}
F_{u_t} (z,w) = tzw  \, \int_0^1 e^{-ts} \,
e^{-sW( te^{-t/2} z )} \, 
e^{-sW( te^{-t/2} w )} \, ds,
\end{equation}
with $W$ the Lambert function (viewed here as analytic 
function on $\{ z \in \bC \mid \, |z| < 1/e \}$).
\end{lemma}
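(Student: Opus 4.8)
The plan is to substitute the $R$-transform of $u_t$ into the closed formula (\ref{eqn:41b}) of Lemma \ref{lemma:4.1}, simplify the resulting rational expression to obtain (\ref{eqn:42a}), and then recognize that expression as the Laplace-type integral (\ref{eqn:42b}) by invoking the functional equation of the Lambert function.

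First I would compute $R_{u_t}$ and $R_{u_t^{*}}$. From Equation (\ref{eqn:1c}) we have $\kappa_n(u_t,\ldots,u_t) = e^{-nt/2}\frac{(-n)^{n-1}}{n!}t^{n-1}$, so summing the series and factoring out $\tfrac1t$ gives
\[
R_{u_t}(z) = \frac{1}{t}\sum_{n=1}^{\infty}\frac{(-n)^{n-1}}{n!}\bigl(t e^{-t/2}z\bigr)^{n} = \frac{1}{t}\,W\bigl(t e^{-t/2}z\bigr)
\]
(equivalently, one rescales (\ref{eqn:1b}) using $u_t = e^{-t/2}v_t$). Since the cumulants in (\ref{eqn:1c}) are real numbers, Remark \ref{rem:2.4}.3(c) gives $\kappa_n(u_t^{*},\ldots,u_t^{*}) = \overline{\kappa_n(u_t,\ldots,u_t)} = \kappa_n(u_t,\ldots,u_t)$, hence $R_{u_t^{*}} = R_{u_t}$ (cf.\ also Remark \ref{rem:3.2}.3).

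Now set $A := W(te^{-t/2}z)$ and $B := W(te^{-t/2}w)$, so that $R_{u_t}(z) = A/t$ and $R_{u_t^{*}}(w) = B/t$. Then (\ref{eqn:41b}) becomes, for $|z|,|w|$ small,
\[
F_{u_t}(z,w) = \frac{zw - \frac{A}{t}\cdot\frac{B}{t}}{1 + \frac{A}{t} + \frac{B}{t}},
\]
and clearing the common factor $t$ from numerator and denominator yields (\ref{eqn:42a}). (For the domain one notes that $W$ is analytic on $\{|y|<1/e\}$ with $W(0)=0$, so for fixed $t$ both $A$ and $B$ are defined and tend to $0$ as $z,w\to 0$; in particular $t+A+B\to t>0$, the denominator is nonzero on a small polydisc, and both sides are analytic there.) For the equivalence with (\ref{eqn:42b}), the one fact I would use is the defining identity $W(y)e^{W(y)}=y$, which gives $Ae^{A}=te^{-t/2}z$ and $Be^{B}=te^{-t/2}w$, hence
\[
AB\,e^{t+A+B} = \bigl(Ae^{A}\bigr)\bigl(Be^{B}\bigr)e^{t} = \bigl(te^{-t/2}z\bigr)\bigl(te^{-t/2}w\bigr)e^{t} = t^{2}zw,
\]
i.e.\ $e^{-(t+A+B)} = AB/(t^{2}zw)$. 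Since $\int_0^1 e^{-cs}\,ds = (1-e^{-c})/c$ for $c\neq 0$, applying this with $c=t+A+B$ gives
\[
tzw\int_0^1 e^{-ts}e^{-sA}e^{-sB}\,ds = tzw\cdot\frac{1-e^{-(t+A+B)}}{t+A+B} = tzw\cdot\frac{1-AB/(t^{2}zw)}{t+A+B} = \frac{1}{t}\cdot\frac{t^{2}zw-AB}{t+A+B},
\]
which is exactly the right-hand side of (\ref{eqn:42a}).

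The only genuinely delicate point — and the closest thing to an obstacle — is bookkeeping of the region of validity: (\ref{eqn:41b}) holds only on some polydisc $|z|,|w|<r$, and one must simultaneously remain inside the disc of analyticity of $W$ and away from the zero set of $1+R_{u_t}(z)+R_{u_t^{*}}(w)$. All of this is arranged by passing to a sufficiently small neighbourhood of the origin, on which the displayed formulas and every intermediate manipulation above are legitimate; as both claimed expressions are analytic near the origin, agreement on such a neighbourhood suffices.
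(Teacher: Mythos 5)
Your proposal is correct and follows essentially the same route as the paper: substitute $R_{u_t}(z)=\tfrac{1}{t}W(te^{-t/2}z)$ into Lemma \ref{lemma:4.1} to get (\ref{eqn:42a}), then use the defining identity $W(y)e^{W(y)}=y$ to obtain $ABe^{t+A+B}=t^2zw$ (the paper's Equation (\ref{eqn:42d})) and identify the rational expression with the integral. The only cosmetic difference is that you evaluate $\int_0^1 e^{-cs}\,ds$ directly, whereas the paper runs the same computation in reverse via the substitution $s=1-x$.
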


\begin{proof}
As mentioned in the introduction, the rescaling $v_t = e^{t/2}u_t$ 
has $R$-transform $R_{v_t} (z) = t^{-1} W(tz)$.  But
$R_{u_t} (z) = R_{v_t} ( e^{-t/2}z )$, so we find the 
$R$-transform of $u_t$ to be
\begin{equation}  \label{eqn:42c}
R_{u_t} (z) =  \frac{1}{t} W(t e^{-t/2} z).
\end{equation}
The equality (\ref{eqn:42c}) holds when $|z|$ is small enough so 
that both sides are defined (one can e.g.~use $|z| < 1/16$ on 
the left-hand side and $|z| < e^{t/2}/(et)$ on the right-hand 
side).  The 
adjoint $u_t^{*}$ has the same $R$-transform as $u_t$ itself.
We replace all this into the result of Lemma \ref{lemma:4.1}.
Upon also requiring the condition that $|z|, |w|$ are small enough 
such that
\[
| W( t e^{-t/2} z)|, \, | W( t e^{-t/2} w)| < t/2
\]
(which ensures that the denominator 
$t + W( t e^{-t/2} z) + W( t e^{-t/2} w)$ does not vanish),
we arrive to the formula for $F_{u_t}$ that was 
stated in Equation (\ref{eqn:42a}).

In order to go from (\ref{eqn:42a}) to (\ref{eqn:42b}), let us fix
$z,w \in \bC$ such that $|z|, |w|$ satisfy the restrictions 
mentioned above, and let us denote
\[
W( t e^{-t/2} z) =: \alpha, \, W( t e^{-t/2} w) =: \beta.
\]
From the definition of the Lambert function it follows that
$\alpha e^{\alpha} = t e^{-t/2} z, \ \ 
\beta e^{\beta} = t e^{-t/2} w$,
and multiplying together the latter equations gives
\begin{equation}  \label{eqn:42d}
t^2 zw = \alpha \beta e^{t + \alpha + \beta}.
\end{equation}
We write the right-hand side of (\ref{eqn:42a}) in terms of 
$\alpha$ and $\beta$ (where $t^2 zw$ is substituted from 
Equation (\ref{eqn:42d})), and we obtain
\[
F_{u_t} (z,w) = \frac{1}{t} \cdot 
\frac{ \alpha \beta e^{t + \alpha + \beta}- \alpha \beta}{t
                                           + \alpha + \beta}
= \frac{\alpha \beta}{t} \ \int_0^1 
e^{x(t + \alpha + \beta)} \, dx.
\]
Finally, in the latter integral we make the substitution 
$s = 1-x$, which leads to 
\[
F_{u_t} (z,w) 
= \frac{\alpha \beta}{t} \cdot e^{t + \alpha + \beta} \cdot
\int_0^1 
e^{-s(t + \alpha + \beta)} \, ds.
\]
The constant $( \alpha \beta e^{t + \alpha + \beta} )/t$ 
is (by (\ref{eqn:42d})) equal to $tzw$, hence reverting back 
from $\alpha, \beta$ to $z,w$ takes us precisely to the integral 
formula stated in Equation (\ref{eqn:42b}).
\end{proof}

$\ $

\begin{proposition}  \label{prop:4.3}
Let us fix $t \in ( 0, \infty )$ and let $u_t$ be as above
(free unitary Brownian motion at time $t$).  For every 
$k, \ell \in \bN$ we have 
\begin{equation}  \label{eqn:43a}
\kappa_{k+ \ell } ( \,  \underbrace{u_t, \ldots , u_t}_k \, , 
\,  \underbrace{u_t^{*}, \ldots , u_t^{*}}_{\ell} \, ) 
= \frac{(-1)^{k+ \ell}}{ (k-1)! ( \ell -1)! } 
t^{k+ \ell -1} \, ( e^{-t/2} )^{k+ \ell -2} \cdot I_{k,\ell} (t),
\end{equation}
where 
\begin{equation}  \label{eqn:43b}
I_{k, \ell} (t) :=  \int_0^1 e^{-ts} \, s^2 \, 
(s+k-1)^{k-2} \, (s+ \ell -1)^{\ell -2} \, ds.
\end{equation}
\end{proposition}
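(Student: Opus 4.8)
The plan is to read off the cumulants $\kappa_{k+\ell}(\underbrace{u_t,\ldots,u_t}_{k},\underbrace{u_t^{*},\ldots,u_t^{*}}_{\ell})$ as Taylor coefficients of $z^kw^\ell$ in the function $F_{u_t}(z,w)$, using the integral representation (\ref{eqn:42b}) from Lemma \ref{lemma:4.2}. Since $F_{u_t}$ is by its very definition (\ref{eqn:41a}) the double power series with these cumulants as coefficients, and since it agrees with the right-hand side of (\ref{eqn:42b}) on a neighbourhood of $(0,0)$, it suffices to expand that right-hand side as a double power series near the origin. The one new input needed is the Taylor expansion of the two factors $e^{-sW(\,\cdot\,)}$ occurring in (\ref{eqn:42b}).

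The key identity is: for $|y|<1/e$ and any $s\in\bC$,
\[
e^{-sW(y)}=\sum_{n\ge 0}\frac{(-1)^n\,s\,(n+s)^{n-1}}{n!}\,y^n,
\]
with the convention that the $n=0$ summand equals $1$ (formally $s\cdot s^{-1}$). This follows from Lagrange inversion applied to $W(y)e^{W(y)}=y$: writing $\phi=W(y)$, so that $y=\phi/\psi(\phi)$ with $\psi(\phi)=e^{-\phi}$, one gets
\[
[y^n]\,e^{-sW(y)}=\frac1n\,[\phi^{n-1}]\Bigl((-s)e^{-s\phi}\Bigr)e^{-n\phi}=\frac{-s}{n}\cdot\frac{(-(n+s))^{n-1}}{(n-1)!}=\frac{(-1)^n\,s\,(n+s)^{n-1}}{n!}.
\]
By Stirling, $(n+1)^{n-1}/n!=O(e^n/n^{3/2})$, so this series converges absolutely and uniformly for $s\in[0,1]$ and $|y|\le\rho$ whenever $\rho<1/e$; in particular it may be integrated term by term in $s$ over $[0,1]$.

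Substituting $y=te^{-t/2}z$ and $y=te^{-t/2}w$ into (\ref{eqn:42b}), forming the Cauchy product of the two resulting series, interchanging the summation with $\int_0^1(\,\cdot\,)\,ds$ (legitimate for $|z|,|w|$ small, by the uniform convergence above), and extracting the coefficient of $z^kw^\ell$ for $k,\ell\ge 1$ — where the outer factor $tzw$ shifts this to the coefficient of $z^{k-1}w^{\ell-1}$ in the double series — one obtains
\[
\kappa_{k+\ell}\bigl(\underbrace{u_t,\ldots,u_t}_{k},\underbrace{u_t^{*},\ldots,u_t^{*}}_{\ell}\bigr)=t\cdot\frac{(-1)^{k-1}(te^{-t/2})^{k-1}}{(k-1)!}\cdot\frac{(-1)^{\ell-1}(te^{-t/2})^{\ell-1}}{(\ell-1)!}\int_0^1 e^{-ts}\,s^2\,(s+k-1)^{k-2}(s+\ell-1)^{\ell-2}\,ds.
\]
Collecting the constants in front of the integral (using $(-1)^{k+\ell-2}=(-1)^{k+\ell}$ and $t\,(te^{-t/2})^{k+\ell-2}=t^{k+\ell-1}(e^{-t/2})^{k+\ell-2}$) gives exactly $(-1)^{k+\ell}t^{k+\ell-1}(e^{-t/2})^{k+\ell-2}/((k-1)!(\ell-1)!)$, while the remaining integral is precisely $I_{k,\ell}(t)$ of (\ref{eqn:43b}); this is (\ref{eqn:43a}).

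The only mildly delicate points are the Lagrange-inversion identity for $e^{-sW(y)}$ and the bookkeeping around the exponents $k-2$, $\ell-2$ when $k$ or $\ell$ equals $1$: there the factor $(s+k-1)^{k-2}$ (resp. $(s+\ell-1)^{\ell-2}$) is $s^{-1}$, but it is absorbed by the $s^2$ in the integrand, so $I_{k,\ell}(t)$ is an honest convergent integral — for $k=\ell=1$ the integrand is just $e^{-ts}$, and (\ref{eqn:43a}) then reads $\kappa_2(u_t,u_t^{*})=1-e^{-t}$, as it must. As a further consistency check, $I_{2,1}(t)=\int_0^1 s\,e^{-ts}\,ds$ yields $\kappa_3(u_t,u_t,u_t^{*})=(t+1)e^{-3t/2}-e^{-t/2}$, matching the polynomial $Z_{(1,1,*)}$ recorded earlier. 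Everything else is routine manipulation of series coefficients.
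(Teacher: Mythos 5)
Your proposal is correct and follows essentially the same route as the paper: both expand the integral representation of $F_{u_t}$ from Lemma \ref{lemma:4.2} via the series $e^{-sW(y)}=\sum_{n\ge 0}\frac{s(s+n)^{n-1}}{n!}(-y)^n$, integrate term by term, and read off the coefficient of $z^kw^{\ell}$. The only differences are in the supporting details --- you derive that series by Lagrange inversion where the paper cites \cite{CJK1997}, and you justify the sum--integral interchange by uniform convergence for $s\in[0,1]$, $|y|\le\rho<1/e$, where the paper instead restricts to negative real $z,w$, applies monotone convergence, and then identifies $F_{u_t}$ with an auxiliary analytic function $G_t$ --- and both routes are sound.
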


\begin{proof} 
It is known (see e.g.~\cite{CJK1997}) that for any $s \in [0,1]$
and $y \in \bC$ with $|y| < 1/e$ one has the series expansion
\[
e^{-sW(y)} = 1 - sy + \frac{s(s+2)}{2!} y^2 
- \frac{s(s+3)^2}{3!} y^3 + \cdots 
+ (-1)^n \frac{s(s+n)^{n-1}}{n!} y^n + \cdots ,
\]
which we will find convenient to write concisely as
\begin{equation}   \label{eqn:43c}
e^{-sW(y)} = \sum_{n=0}^{\infty} \frac{s(s+n)^{n-1}}{n!} (-y)^n.
\end{equation}

Let us then pick some $z,w$ with $|z|, |w|$ small enough (in the 
sense discussed in Lemma \ref{lemma:4.2}) and such that moreover
$z,w$ are real negative numbers.  By using the expansion 
(\ref{eqn:43c}) in the Equation (\ref{eqn:42b}) of 
Lemma \ref{lemma:4.2} we infer that
\[
F_{u_t} (z,w) = tzw \, \int_0^1 e^{-ts} 
\cdot \sum_{m=0}^{\infty} \frac{s(s+m)^{m-1}}{m!} (-t e^{-t/2}z)^m 
\cdot \sum_{n=0}^{\infty} \frac{s(s+n)^{n-1}}{n!} (-t e^{-t/2}w)^n 
\, ds
\]
\[
= \int_0^1 \, \Bigl( \, \sum_{m,n = 0}^{\infty} \, tzw \cdot e^{-ts} 
\cdot \frac{s(s+m)^{m-1}}{m!} (-t e^{-t/2}z)^m 
\cdot \frac{s(s+n)^{n-1}}{n!} (-t e^{-t/2}w)^n \, \Bigr) \, ds.
\]
The terms of the infinite double-sum are non-negative, hence
the monotone convergence theorem allows us to interchange the 
double-sum with the integral.  When we do that, and we 
move the powers of $-z,-w,t, e^{-t/2}$ outside the integral, we 
come to the fact that (for $z,w$ picked as above) we have
\[
F_{u_t} (z,w) = \sum_{m,n=0}^{\infty} (-z)^{m+1} (-w)^{n+1} \cdot
\frac{ t^{m+n+1} (e^{-t/2})^{m+n}}{m! n!} \cdot
\Bigl( \, \int_0^1 s(s+m)^{m-1} \, s(s+n)^{n-1} \, ds \Bigr).
\]
It is convenient to also make here the shift of indices 
$m+1 = k$, $n+1 = \ell$, and conclude that
\begin{equation}   \label{eqn:43d}
F_{u_t} (z,w) = \sum_{k, \ell =1}^{\infty} (-z)^k (-w)^{\ell} \cdot
\frac{ t^{k+ \ell -1} (e^{-t/2})^{k+ \ell -2}}{ (k-1)! 
( \ell -1)!} \cdot I_{k, \ell} (t),
\end{equation}
with $I_{k, \ell} (t)$ defined in (\ref{eqn:43b}).

Now, it is easy to see that if we put 
\[
\lambda_{k, \ell} (t) := 
\frac{(-1)^{k+ \ell}}{ (k-1)! ( \ell -1)! } 
t^{k+ \ell -1} \, ( e^{-t/2} )^{k+ \ell -2} \cdot I_{k,\ell} (t),
\ \ k, \ell \in \bN,
\]
then the formula
\[
G_t (z,w) := \sum_{k, \ell = 1}^{\infty} \lambda_{k, \ell} (t) 
z^k w^{\ell}
\]
gives an analytic function defined for $|z|, |w|$ small enough.
Indeed, one can simply bound the integrand in $I_{k, \ell} (t)$ 
by $k^{k-2} \ell^{\ell - 2}$ to conclude that 
\[
0 \leq I_{k, \ell} (t) \leq 
k^{k-2} \ell^{\ell - 2} \leq \gamma \cdot e^k (k-1)! \cdot 
e^{\ell} ( \ell - 1 )!,
\]
where $\gamma >0$ is an absolute constant (not depending on $k, \ell$)
--- the second inequality displayed above follows from Stirling's 
formula.  This implies in turn the bound
\[
| \lambda_{k, \ell} (t) | \leq (\gamma e^t /t) \cdot
(e t e^{-t/2})^{k+ \ell}, \ \ \forall \, k, \ell \in \bN,
\]
and gives the claim about the existence of $G_t (z,w)$.

Finally, Equation (\ref{eqn:43d}) can be read as saying that 
$F_{u_t} (z,w) = G_t (z,w)$ for $z,w$ real negative numbers
of small enough absolute value.  This implies that $F_{u_t}$ 
and $G_t$ must have the same series expansion around $(0,0)$, 
which is exactly the statement that had to be proved.
\end{proof}

$\ $

The formula for cumulants found in Proposition \ref{prop:4.3} can 
be re-phrased as a formula for the corresponding polynomials 
$Z_{\omega}$, as follows.

$\ $

\begin{theorem}   \label{thm:4.4}
Let $k, \ell$ be positive integers.  There exist polynomials
$U_{k, \ell}, V_{k, \ell} \in \bZ [x]$, uniquely determined,
such that
\begin{equation}   \label{eqn:44a}
\left\{  \begin{array}{ccl}
U_{k, \ell} (x) & = &  - x^{k + \ell -1} \int_0^{\infty} e^{-xs} 
\bigl( \,(s+1)^2 (s+k)^{k-2} (s+ \ell )^{ \ell -2} \, \bigr) ds,  \\
               &   &                                              \\
V_{k, \ell} (x) & = & x^{k + \ell -1} \int_0^{\infty} e^{-xs} 
\bigl( \, s^2 (s+k-1)^{k-2} (s+ \ell -1)^{ \ell -2} \, \bigr) ds,
\end{array}  \right.
\ x \in [ 0, \infty ).
\end{equation}
One has
\begin{equation}   \label{eqn:44b}
Z_{ ( \, \underbrace{1, \ldots , 1}_k \, 
         \underbrace{*, \ldots , *}_{\ell} \, ) } (x,y) 
= \frac{ (-1)^{k + \ell} }{ (k-1)! ( \ell -1)! } \, 
\Bigl(  \, U_{k, \ell} (x) y^{k+ \ell} 
      + V_{k, \ell} (x) y^{k+ \ell -2} \, \Bigr).
\end{equation}
\end{theorem}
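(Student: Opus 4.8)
The plan is to obtain (\ref{eqn:44b}) by reorganizing the explicit cumulant computation of Proposition~\ref{prop:4.3}, and then to invoke the uniqueness statement of Proposition~\ref{prop:3.1} to turn an identity between functions of $t$ into an identity of polynomials. Matching the shape of (\ref{eqn:43a})--(\ref{eqn:43b}) against the target (\ref{eqn:44b}), one sees that the whole theorem comes down to the identity
\[
t^{k+\ell-1}\, I_{k,\ell}(t) \;=\; U_{k,\ell}(t)\, e^{-t} + V_{k,\ell}(t), \qquad t>0,
\]
together with the (separate) fact that the two integral expressions in (\ref{eqn:44a}) genuinely define polynomials in $\bZ[x]$.

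I would first dispose of the polynomiality claim. In each line of (\ref{eqn:44a}), after cancelling, the quantity inside the integral is $e^{-xs}$ times a polynomial $P(s)\in\bZ[s]$ of degree at most $k+\ell-2$: the prefactors $(s+1)^2$ and $s^2$ are present exactly to absorb the negative exponents that appear in the boundary cases $k=1$ and/or $\ell=1$, where $(s+k)^{k-2}=(s+1)^{-1}$ or $(s+k-1)^{k-2}=s^{-1}$ (and similarly in $\ell$); for instance $V_{1,1}$ has integrand $e^{-xs}$, while $V_{1,\ell}$ with $\ell\ge2$ has integrand $e^{-xs}\,s(s+\ell-1)^{\ell-2}$, and analogously for $U$. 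Writing $P(s)=\sum_{j=0}^{k+\ell-2}c_j s^j$ with $c_j\in\bZ$ and using $\int_0^{\infty}e^{-xs}s^j\,ds=j!/x^{j+1}$, multiplication by $x^{k+\ell-1}$ clears every denominator and yields $\sum_j c_j\, j!\, x^{k+\ell-2-j}$, a polynomial in $\bZ[x]$. Existence of $U_{k,\ell},V_{k,\ell}$ follows, and uniqueness is trivial since they are defined outright by the formulas.

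Next I would prove the displayed identity. Fix $t>0$ and split $\int_0^1=\int_0^{\infty}-\int_1^{\infty}$ in the definition (\ref{eqn:43b}) of $I_{k,\ell}(t)$. The $\int_0^{\infty}$-part, times $t^{k+\ell-1}$, is $V_{k,\ell}(t)$ by definition. In the $\int_1^{\infty}$-part the substitution $s=1+\sigma$ turns $s^2$ into $(\sigma+1)^2$, $s+k-1$ into $\sigma+k$, $s+\ell-1$ into $\sigma+\ell$, and $e^{-ts}$ into $e^{-t}e^{-t\sigma}$, so that after multiplying by $t^{k+\ell-1}$ one recognizes $-e^{-t}U_{k,\ell}(t)$; hence the identity. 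Substituting it into (\ref{eqn:43a}) and using $(e^{-t/2})^{k+\ell-2}e^{-t}=(e^{-t/2})^{k+\ell}$ gives, for all $t>0$,
\[
\kappa_{k+\ell}\bigl(u_t,\dots,u_t,u_t^{*},\dots,u_t^{*}\bigr)
=\frac{(-1)^{k+\ell}}{(k-1)!(\ell-1)!}\Bigl(U_{k,\ell}(t)(e^{-t/2})^{k+\ell}+V_{k,\ell}(t)(e^{-t/2})^{k+\ell-2}\Bigr).
\]
By Proposition~\ref{prop:3.1} the left side equals $Z_{\omega}(t,e^{-t/2})$ for $\omega=(1,\dots,1,*,\dots,*)$, while the right side is the value at $(t,e^{-t/2})$ of a polynomial in $\bQ[x,y]$; since such a polynomial is determined by its values on the curve $\{(t,e^{-t/2}):t>0\}$, the two polynomials coincide, which is (\ref{eqn:44b}).

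I do not expect a serious obstacle: the argument is essentially bookkeeping on top of Proposition~\ref{prop:4.3}. The one point requiring genuine care rather than routine manipulation is the polynomiality step --- one must verify case by case (notably $k=1$ and/or $\ell=1$) that the integrands are honestly $e^{-xs}$ times an \emph{integer}-coefficient polynomial of degree $\le k+\ell-2$, since it is precisely this that forces the integrals to land in $\bZ[x]$ and not merely in $\bQ[x]$ or $\bQ(x)$.
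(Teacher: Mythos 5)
Your proposal is correct and follows essentially the same route as the paper's proof: the same reduction of (\ref{eqn:44b}) to the identity $t^{k+\ell-1}I_{k,\ell}(t)=U_{k,\ell}(t)e^{-t}+V_{k,\ell}(t)$ via Proposition \ref{prop:4.3}, the same splitting $\int_0^1=\int_0^{\infty}-\int_1^{\infty}$ with the shift $s\mapsto s+1$, and the same term-by-term Laplace integration to establish polynomiality. Your extra attention to the boundary cases $k=1$ or $\ell=1$ (where the factors $(s+1)^2$ and $s^2$ cancel the negative exponents) and to the integrality of the coefficients is a welcome refinement of a point the paper passes over quickly, but it does not change the argument.
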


\begin{proof}  
In order to verify that the function $U_{k, \ell} (x)$ defined 
by the first integral in (\ref{eqn:44a}) is indeed a polynomial,
we expand the product $(s+1)^2 (k+s)^{k-2} ( \ell +s)^{ \ell -2}$
in powers of $s$, then use the fact that 
\[
x^{k + \ell -1} \, \int_0^{\infty} e^{-xs} s^m ds 
= m! \, x^{(k + \ell -1) - (m+1)},
\ \ 0 \leq m \leq k + \ell -2.  
\]
A similar calculation shows that $V_{k, \ell} (x)$ is a polynomial 
as well.

In order to prove that (\ref{eqn:44b}) holds, it suffices to fix 
a $t \in [0, \infty )$ and to verify the following fact: when 
evaluated at $(t, e^{-t/2})$, the polynomial in $(x,y)$ from the 
right-hand side of (\ref{eqn:44b}) yields the free cumulant
$\kappa_{k + \ell} ( u_t, \ldots , u_t, u_t^{*}, \ldots, u_t^{*} )$
(with $k$ entries of $u_t$ and $\ell$ entries of $u_t^{*}$).  By 
comparing this fact against the result of Proposition \ref{prop:4.3}, 
and by doing some obvious simplifications, we see that it is 
actually sufficient to check that
\[
t^{k+ \ell -1} \, I_{k, \ell} (t) 
= U_{k, \ell} (t) \, e^{-t} + V_{k, \ell} (t),
\]
where $I_{k, \ell} (t)$ is the integral defined in Equation
(\ref{eqn:43b}).  But the latter verification is immediately 
obtained when one writes the integral ``$\int_0^1$'' which defines 
$I_{k, \ell} (t)$ as a difference
``$\int_0^{\infty} - \int_1^{\infty}$'' (by using the same 
integrand).  Indeed, the very definition of $V_{k, \ell}$ says
that
\[
t^{k+ \ell -1} \, \int_0^{\infty} e^{-ts} \, s^2 \, 
(s+k-1)^{k-2} \, (s+ \ell -1)^{\ell -2} \, ds = V_{k, \ell} (t),
\]
while on the other hand the change of variable 
$\widetilde{s} = s-1$ gives
\[
t^{k+ \ell -1} \, \int_1^{\infty} e^{-ts} \, s^2 \, 
(s+k-1)^{k-2} \, (s+ \ell -1)^{\ell -2} \, ds 
\]
\[
= t^{k+ \ell -1} \, \int_0^{\infty} e^{-t( \widetilde{s} + 1} 
\, ( \widetilde{s} + 1 )^2 
\, ( \widetilde{s} + k)^{k-2} 
\, ( \widetilde{s} + \ell )^{\ell -2} \, d \widetilde{s},
\]
which is $- e^{-t} U_{k, \ell} (t)$.
\end{proof}

$\ $

\begin{remark}  \label{rem:4.5}
Let us illustrate the explicit writing of the polynomials 
$U_{k, \ell}$ and $V_{k, \ell}$ in the special case $\ell = 1$ 
(this gives, in some sense, the simplest possible example of 
free cumulants of $u_t$ and $u_t^{*}$ that are truly ``joint'').
The formulas defining $U_{k, \ell}$ and $V_{k, \ell}$ become here
\[
U_{k,1} (x) = - x^k  \int_0^{\infty} e^{-xs} (s+1) 
                                             (s+k)^{k-2} ds,  \ \ 
V_{k,1} (x) =   x^k  \int_0^{\infty} e^{-xs} s(s+k-1)^{k-2} ds,
\ \ k \in \bN.
\]
We note the special relation
\[
U_{k,1} = - \frac{1}{k} V_{ k+1,1 }, \ \ 
\forall \, k \geq 1, 
\]
which is easily derived by writing 
$V_{k+1,1} (x) =   -x^k  \int_0^{\infty} ( e^{-xs} )' \cdot
s(s+k-1)^{k-2} ds$, and by doing an integration by parts.
We thus only need to write explicitly the $V_{k,1}$'s; this is 
done in the way shown at the beginning of the preceding proof, 
which gives $V_{1,1} (x) = V_{2,1} (x) = 1$ and
\begin{equation}   \label{eqn:45a}
V_{k,1} (x) = 
\sum_{j=0}^{k-2} \left( \begin{array}{c} k-2 \\ j \end{array} \right) 
\cdot (k-1-j)! \cdot (k-1)^j \ t^j, \ \, k \geq 3.
\end{equation}

In terms of the actual $*$-cumulants of $u_t$, the above 
considerations say that for every $k \in \bN$ we have: 
\begin{align*}
\kappa_{k+1} 
( \, \underbrace{u_t, \ldots , u_t}_k  \, , u_t^{*} \, ) 
& = Z_{ ( \, \underbrace{1, \ldots , 1}_k \, , * \, ) }
    ( t, e^{-t/2} )                                           \\
& = \frac{(-1)^{k+1}}{ (k-1)! }
    \Bigl( \, U_{k,1}(t) (e^{-t/2})^{k+1}
            + V_{k,1}(t) (e^{-t/2})^{k-1} \, \Bigr)            \\
& = \frac{ (- e^{-t/2})^{k-1} }{ (k-1)! }
    \Bigl( \, - \frac{1}{k} V_{k+1,1}(t) e^{-t}
                          + V_{k,1}(t) \, \Bigr).
\end{align*}
Thus, if we consider the sequence of polynomials
\[
V_k := V_{k,1} / (k-1)!, \ \ k \in \bN,
\]
(with $V_{k,1}$ taken from Equation (\ref{eqn:45a})), the 
conclusion is that for every $k \in \bN$ and $t \in [0, \infty )$
we have 
\begin{equation}   \label{eqn:45b}
\kappa_{k+1} 
( \, \underbrace{u_t, \ldots , u_t}_k  \, , u_t^{*} \, ) 
= (- e^{-t/2} )^{k-1} 
\bigl( V_k (t) - e^{-t} V_{k+1} (t) \bigr).
\end{equation}
So for instance for $k \leq 4$ we have
\[
\left\{   \begin{array}{lcl}
\kappa_2 (u_t, u_t^{*}) & =  & (-e^{-t/2})^0 \, (1 -e^{-t}),    \\
\kappa_3 (u_t, u_t, u_t^{*}) & = & (-e^{-t/2})^1 \, 
                     \bigl( \, 1- e^{-t} (t+1) \, \bigr),     \\
\kappa_4 (u_t, u_t, u_t, u_t^{*}) & = & (-e^{-t/2})^2 \, 
\bigl( \, (t+1) - e^{-t} ( \frac{3}{2} t^2 + 2t + 1 ) \, \bigr), \\
\kappa_5 (u_t, u_t, u_t, u_t, u_t^{*}) & = & (-e^{-t/2})^3 \,
\bigl( \, ( \frac{3}{2} t^2 + 2t + 1 ) 
- e^{-t} ( \frac{8}{3} t^3 + 4 t^2 + 3t + 1 ) \, \bigr) .
\end{array}  \right.
\]
\end{remark}

$\ $

$\ $

\section{Another special case --- alternating $\omega$'s}

\setcounter{equation}{0}

The special form of free joint cumulants for a Haar unitary and 
its adjoint (reviewed in Equation (\ref{eqn:1e}) of the introduction)
suggests that in our discussion of the polynomials $Z_{\omega}$
we should consider the case when $\omega$ is an alternating string 
of even length.  The polynomial $Z_{\omega}$ associated to the 
alternating string $(1,*, \ldots , 1,*) \in \{ 1,* \}^{2k}$
is of the form
\begin{equation}  \label{eqn:5a}
(-1)^{k-1} \Bigl( \, C_{k-1} - T_1^{(k)} (x) \, y^2 
+ T_2^{(k)} (x) \, y^4 - \cdots + (-1)^k T_k^{(k)} (x) \, y^{2k}
\, \Bigr),
\end{equation}
where $C_{k-1}$ is the $(k-1)$-th Catalan number, and every 
$T_j^{(k)}$ ($1 \leq j \leq k$) is a polynomial of degree 
$j-1$ with strictly positive rational coefficients.  
Examples for small $k$:
\[
\left\{  \begin{array}{lcl}
Z_{(1,*)} (x,y)   & =  & 1 -  y^2, \\ 
                  &    &                             \\
Z_{ (1,*,1,*) } (x,y) & = & -1 + 4y^2 - (2x+3) y^4,   \\
                  &    &                             \\
Z_{ (1,*,1,*,1,*) } (x,y) & = &
2 - 15 y^2 + (12x+30) y^4 - (6x^2 + 18x + 17) y^6,   \\
                  &    &                             \\
Z_{ (1,*,1,*,1,*,1,*) } (x,y) & = & 
-5 + 56y^2 - 28(2x+7) y^4
              + 8(6x^2 + 26x + 33) y^6               \\
          & &
       - \left(\frac{64}{3}x^3 + 96x^2 + 172x + 119\right) y^8.
\end{array}  \right.
\]
The inductive verification that the pattern (\ref{eqn:5a}) holds 
for general $k$ is not hard (based on the recursion from 
Proposition \ref{prop:3.4}), and is left as exercise to the 
reader.  In this section we do not focus on coefficients of 
$Z_{\omega}$'s, but we find it more interesting to look at the 
actual cumulants 
\begin{equation}  \label{eqn:5b}
\xi_n (t) 
:= \kappa_{2n} (u_t, u_t^{*}, \ldots , u_t, u_t^{*} )
= Z_{ ( \, \underbrace{1,*, \ldots , 1,*}_{2n} \, ) }
    (t, e^{-t/2} ), \ n \geq 1,
\end{equation}
where $u_t$ is the free unitary Brownian motion at time $t$.
The notation introduced in (\ref{eqn:5b}) emphasizes the 
dependence on $t$.  This is of relevance because the 
main point of the section is to put into evidence a recursion 
for the derivative of $\xi_n$ with respect to $t$, as shown 
next.

$\ $

\begin{theorem} \label{thm:5.1}
Let $\xi_n (t)$ be as above.  Then for every
$n \geq 2$ one has
\begin{equation}    \label{eqn:51a}
- \frac{1}{n} \frac{d\xi_n}{dt}(t) 
= \xi_n (t) +  \sum_{m=1}^{n-1} \xi_m (t) \xi_{n-m} (t),
  \ \ t \in [ 0, \infty ).
\end{equation}
\end{theorem}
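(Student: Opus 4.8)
The natural route is to differentiate the quasi-polynomial identity
\[
\xi_n (t) = Z_{ ( \underbrace{1,*, \ldots , 1,*}_{2n} ) } (t, e^{-t/2} )
\]
with respect to $t$, and to reconcile the result with the recursion from Proposition~\ref{prop:3.4}. First I would set $\omega_n := ( 1,*, \ldots , 1,* ) \in \{ 1,* \}^{2n}$, which for $n \geq 2$ has length $\geq 3$ with $\omega_n(1) = 1$ and $\omega_n(2n) = *$; hence Proposition~\ref{prop:3.4} applies and gives
\[
Z_{\omega_n} = - \sum_{m=1}^{2n-1} Z_{ ( \omega_n(1), \ldots , \omega_n(m) )} \cdot Z_{ ( \omega_n(m+1), \ldots , \omega_n(2n) )}.
\]
When $m = 2j$ is even, the two substrings are $(1,*, \ldots , 1,*)$ of length $2j$ and $(1,*, \ldots , 1,*)$ of length $2(n-j)$, so these terms contribute $- \sum_{j=1}^{n-1} Z_{\omega_j} Z_{\omega_{n-j}}$. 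When $m = 2j-1$ is odd, the first substring is $(1,*, \ldots , 1,*,1)$ of length $2j-1$ and the second is $(*,1,*, \ldots ,1,*)$ of length $2(n-j)+1$; by Remark~\ref{rem:3.2} (cyclic invariance, order reversal, and the $1 \leftrightarrow *$ swap) both of these odd-length strings have the same $Z$-polynomial as a single block of odd length, so I would want to argue that $Z_{(1,*,\ldots,1)} \cdot Z_{(*,1,\ldots,*)}$ collapses to (a power of $y$ times a polynomial related to) the ordinary cumulants $\kappa(u_t,\ldots,u_t)$ from Equation~(\ref{eqn:1gg}).

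**Key computation.** The heart of the matter is that the odd-length alternating string $(1,*,1,\ldots,*,1)$ of length $2j-1$ has, after a cyclic rotation, the form $(1,1,*,1,*,\ldots)$ — no, more carefully: cyclically $(1,*,\ldots,*,1) \sim (1,1,*,\ldots,*)$ only when there are exactly two $1$'s adjacent, which happens here because the string starts and ends with $1$. So $\switch$ of this string is $2$, and by Theorem~\ref{thm:3.7} its $Z$-polynomial has only its top two terms; by Theorem~\ref{thm:4.4} (with $k = j$, $\ell = j-1$, or the reverse) it is explicitly expressible. Multiplying $Z_{(1,*,\ldots,1)}$ (length $2j-1$) by $Z_{(*,1,\ldots,*)}$ (length $2(n-j)+1 = 2(n-j+1)-1$, so an odd alternating string that is cyclically $(1,*,\ldots,*,1)$ reversed/swapped, hence also switch-$2$) gives a product of two such two-term polynomials; I expect the cross terms and the evaluation at $y = e^{-t/2}$ to organize so that the \emph{sum over all odd $m$} telescopes to exactly $\frac{1}{n}\frac{d\xi_n}{dt}(t) + \xi_n(t) + \bigl(\text{something}\bigr)$, with the ``something'' either vanishing or being absorbed. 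An alternative and perhaps cleaner strategy: work at the level of moments using Lemma~\ref{lemma:3.3} directly for the unitary $u = u_t$, write $\frac{d}{dt}\kappa_{2n}(u_t,u_t^*,\ldots)$ via the known $t$-dependence $\kappa_m(u_t^{\omega(1)},\ldots) = Z_\omega(t,e^{-t/2})$ and the fact that $\frac{d}{dt} e^{-t/2} = -\frac12 e^{-t/2}$, so that $\frac{d\xi_n}{dt} = (\partial_x Z_{\omega_n})(t,e^{-t/2}) - \frac12 e^{-t/2}(\partial_y Z_{\omega_n})(t,e^{-t/2})$, and then match degree-by-degree against Proposition~\ref{prop:3.4}.

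**Where the difficulty lies.** I expect the main obstacle to be bookkeeping the odd-$m$ terms in Proposition~\ref{prop:3.4}: one must identify $Z$ for the two odd substrings, use the invariances of Remark~\ref{rem:3.2} to reduce them to a standard form, and then recognize the resulting polynomial product as reconstructing $\frac{d}{dt}$ applied to a single $\xi$-term. Concretely, the generating-function formulation in the excerpt's introduction — that $H(t,z) = \tfrac12 + \sum \xi_n(t) z^n$ should satisfy $\partial_t H + 2zH\,\partial_z H = z$ — is equivalent to (\ref{eqn:51a}), so a slick proof would verify the PDE: plug the series for $H$ into $\partial_t H + 2zH\,\partial_z H$, collect the coefficient of $z^n$ for $n \geq 2$, and check it equals $0$ (the coefficient of $z^1$ handles the $n=1$ case separately, accounting for why the theorem is stated only for $n \geq 2$). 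Between the direct Proposition~\ref{prop:3.4} approach and the PDE approach, I would carry out the PDE verification as the clean write-up, using Proposition~\ref{prop:3.4} (and its specialization via Lemma~\ref{lemma:3.3} applied to $u_t$) to supply the needed recursion among the $\xi_n(t)$ and their derivatives, and I would double-check the low-order cases against the explicit $Z_{(1,*,1,*)}$ and $Z_{(1,*,1,*,1,*)}$ listed just above the theorem.
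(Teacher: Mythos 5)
Your reduction via Proposition~\ref{prop:3.4} is sound as far as it goes: splitting the recursion for $Z_{(1,*,\ldots,1,*)}$ (length $2n$) according to the parity of $m$, the even-$m$ terms give exactly $-\sum_{j=1}^{n-1}\xi_j(t)\xi_{n-j}(t)$, and the odd-$m$ terms give $-\sum_{j=1}^{n}\eta_j(t)\eta_{n+1-j}(t)$, where $\eta_j(t):=\kappa_{2j-1}(u_t,u_t^*,\ldots,u_t)$ is the common value of the odd alternating cumulants of length $2j-1$. So, given Proposition~\ref{prop:3.4}, the theorem is \emph{equivalent} to the identity $\sum_{j=1}^{n}\eta_j(t)\eta_{n+1-j}(t)=\frac{1}{n}\frac{d\xi_n}{dt}(t)$, and this is precisely where your proposal stops: you ``expect'' the odd-$m$ sum to telescope to $\frac{1}{n}\xi_n'(t)$, but you supply no mechanism that produces a $t$-derivative. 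Nothing in Proposition~\ref{prop:3.4}, Lemma~\ref{lemma:3.3}, or Remark~\ref{rem:3.2} involves $\frac{d}{dt}$; these are algebraic identities holding at each fixed $t$. The ``PDE verification'' you describe at the end is likewise circular: the vanishing of the coefficient of $z^n$ in $\partial_t H + 2zH\,\partial_z H - z$ \emph{is} the statement to be proved, and you have no independent expression for $\partial_t H$. The paper's actual proof imports the time evolution from outside this combinatorial framework: it realizes $e^{nt}\xi_n(t)$ as the top $\theta$-coefficient of the moments $\varphi\bigl((p_\theta v_t q_\theta v_t^*)^n\bigr)$ for free projections $p_\theta, q_\theta$, and then applies the recursion of Benaych-Georges and L\'evy \cite{BGL2011} for the $\partial_t$ of those moments. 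Some such input on how the distribution of $u_t$ evolves in $t$ is indispensable and is entirely absent from your argument.

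A secondary but concrete error: you assert that the odd alternating string $(1,*,1,\ldots,*,1)$ of length $2j-1$ has switch-number $2$ and hence, via Theorems~\ref{thm:3.7} and~\ref{thm:4.4}, a two-term $Z$-polynomial with an explicit Laplace-transform formula. In fact $\switch\bigl((1,*,1,\ldots,*,1)\bigr)=2j-2$ (for instance $\switch\bigl((1,*,1,*,1)\bigr)=4$), this string is not cyclically equivalent to one of the form $(1,\ldots,1,*,\ldots,*)$ once $j\geq 3$, and Theorem~\ref{thm:4.4} does not apply to it. So even the explicit handling of the odd-$m$ factors that you sketch would break down before reaching the unproven derivative identity.
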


\begin{proof}
For convenience of notation, throughout this proof we will 
fix a tracial $*$-probability space $( \cA , \varphi )$
which is large enough to contain all the unitaries $u_t$
for $t \in [0 , \infty )$.  By enlarging $( \cA , \varphi )$ a
bit\footnote{ For instance we can replace $( \cA , \varphi )$ by 
the free product $( \cA , \varphi ) * ( L^{\infty} [0,1], dt )$,
and take 
$p_{\theta}, q_{\theta} \in L^{\infty} [0,1], dt )$ to be the
indicator functions of the intervals 
$[0, \theta]$ and $[1- \theta , 1]$, respectively. }
further, we will moreover assume that $\cA$ contains two 
families of elements
$\{ p_{\theta} \mid 0 < \theta < 1/2 \}$ and
$\{ q_{\theta} \mid 0 < \theta < 1/2 \}$ 
such that

(i) $p_{\theta}^2 = p_{\theta}^{*} = p_{\theta}, \ 
     q_{\theta}^2 = q_{\theta}^{*} = q_{\theta}$ and
$p_{\theta} q_{\theta} = q_{\theta} p_{\theta} = 0,
     \ \ \forall \, \theta \in (0, 1/2)$;

(ii) $\varphi ( p_{\theta} ) = \varphi ( q_{\theta} ) = \theta, 
     \ \ \forall \, \theta \in (0, 1/2)$;

(iii) $\{ p_{\theta} , q_{\theta} \}$ is free from
$\{ u_t, u_t^{*} \}$, for all
$\theta \in (0, 1/2)$  and $t \in [ 0, \infty )$.

\vspace{6pt}

We consider the rescaled elements $v_t = e^{t/2} u_t$. 
Following \cite{BGL2011}, for every $n \in \bN$ we define a 
function $f_{2n} : [0, \infty ) \times ( 0, 1/2 ) \to \bR$ by
\begin{equation}   \label{eqn:51b}  
f_{2n} ( t, \theta ) := \varphi \bigl( \, (p_{\theta} \, v_t
\, q_{\theta} \, v_t^{*} )^n \, \bigr),  \ \ \forall \, 
t \geq 0 \mbox{ and } 0 < \theta < 1/2.
\end{equation}
[For instance for $n = 1$ we have
$f_2 (t, \theta ) 
:= \varphi ( p_{\theta} \, v_t \, q_{\theta} \, v_t^{*} )$, 
and an immediate application of formula (\ref{eqn:24a}) for 
alternating moments yields
$f_2 (t, \theta ) = \theta^2 ( e^t - 1)$.]

\vspace{6pt}

{\em Claim.} For every $n \in \bN$, the function $f_{2n}$
is of the form
\begin{equation}   \label{eqn:51c}  
f_{2n} ( t, \theta ) = \sum_{j=1}^{2n} g_{n,j} (t) 
\, \theta^j, 
\end{equation}
where the $g_{n,j}$'s are quasi-polynomials, and where (for
$j=2n$) we have 
\begin{equation}   \label{eqn:51d}  
g_{n,2n} (t) = e^{nt} \xi_n (t).
\end{equation}

\vspace{6pt}

{\em Verification of Claim.}  Fix $n \in \bN$ for which we will
verify that (\ref{eqn:51c}) and (\ref{eqn:51d}) hold.  We write
$f_{2n} ( t, \theta )$ as
$\varphi ( \, v_t \, q_{\theta} \, v_t^{*} \, p_{\theta} 
\cdots v_t \, q_{\theta} \, v_t^{*} \, p_{\theta} \, )$, 
and we expand this alternating moment of order $4n$ in the way 
indicated in Remark \ref{rem:2.4}.1, in terms of moments of 
$p_{\theta},q_{\theta}$ and of free cumulants of $v_t, v_t^{*}$.
In this way we obtain the formula
\begin{equation}    \label{eqn:51e}
f_{2n} ( t, \theta )
= \sum_{\sigma \in NC(2n)} g_{\sigma} (t) \cdot 
                           h_{\sigma} ( \theta ),
\end{equation}
where for $\sigma \in NC(2n)$ we put
\[
\left\{  \begin{array}{ccl}
g_{\sigma} (t) & = & 
\prod_{V \in \sigma} \kappa_{|V|} \bigl( \, ( v_t, v_t^{*}, 
\ldots , v_t, v_t^{*}) \mid V \, ) \, \bigr),                    \\
               &   &                                             \\
h_{\sigma} ( \theta ) & = & 
\prod_{W \in \Kr_{2n} ( \sigma )} \varphi_{|W|} 
\bigl( \, (q_{\theta}, p_{\theta}, \ldots , 
q_{\theta}, p_{\theta} ) \mid W \, ) \ \bigr).
\end{array}   \right.
\]
Note that every $g_{\sigma}$ can be written as
\[
g_{\sigma} (t) = e^{nt} \cdot
\prod_{V \in \sigma} \kappa_{|V|} \bigl( \, ( u_t, u_t^{*}, 
\ldots , u_t, u_t^{*}) \mid V \, ) \, \bigr), 
\]
and is thus a quasi-polynomial by Proposition 3.1.

Let us next observe that for every non-empty set 
$W \subseteq \{ 1, \ldots , 2n \}$, the moment 

\noindent
$\varphi_{|W|} \bigl( \, (q_{\theta}, p_{\theta}, \ldots , 
q_{\theta}, p_{\theta} ) \mid W \, ) \ \bigr)$ is equal to either 
$0$ or $\theta$.  Indeed, if $W$ contains both odd and even numbers,
then the moment in discussion vanishes due to the hypothesis that 
$p_{\theta} q_{\theta} = q_{\theta} p_{\theta} = 0$.
In the opposite case, we are looking either at
$\varphi ( p_{\theta}^{|W|} )$ or at $\varphi ( q_{\theta}^{|W|} )$, 
and both these moments are equal to $\theta$.

The observation from the preceding paragraph implies that, for 
every $\sigma \in NC(2n)$, the value of
$h_{\sigma} ( \theta )$ is either $0$ or $\theta^{j( \sigma )}$,
with $j( \sigma ) := \mid \, \Kr_{2n} ( \sigma ) \, \mid
\ = \ (2n+1) - | \sigma |$,
where $| \sigma |$ denotes the number of blocks of $\sigma$.  
Moreover, the case 
$h_{\sigma} ( \theta ) =  \theta^{j( \sigma )}$ occurs if and 
only if every block $W$ of $\Kr_{2n} ( \sigma )$ either is 
contained in $\{ 1,3, \ldots , 2n-1 \}$ or is contained in
$\{ 2,4, \ldots , 2n \}$.  The latter condition on 
$\Kr_{2n} ( \sigma )$ is easily seen to be equivalent to the 
fact that every block of $\sigma$ has even cardinality (cf.
\cite{NS2006}, Exercise 9.42(1) on p. 154).  We thus come to
the conclusion that we can re-write Equation (\ref{eqn:51e})
in the form
\[
f_{2n} ( t, \theta )
= \sum_{j=1}^{2n} g_{n,j} (t) \cdot \theta^j ,
\]
where for $1 \leq j \leq 2n$ we define the quasi-polynomial 
$g_{n,j}$ to be
\begin{equation}   \label{eqn:51f}
g_{n,j} := \sum_{  \begin{array}{c}
{\scriptstyle \sigma \in NC (2n), \ | \sigma | = 2n+1-j} \\
{\scriptstyle and \ |V| \ even \ for \ all \ V \in \sigma}
\end{array}  } \ g_{\sigma} (t).
\end{equation}
In the special case $j = 2n$, the only partition involved in the 
sum from (\ref{eqn:51f}) is $\sigma = 1_{2n}$, which has
$g_{ { }_{1_{2n}} } (t) = 
\kappa_{2n} (v_t, v_t^{*}, \ldots , v_t, v_t^{*} )
= e^{nt} \cdot
\kappa_{2n} (u_t, u_t^{*}, \ldots , u_t, u_t^{*} )$,
and (\ref{eqn:51d}) also follows.
\hfill {\em [End of Verif. of Claim]}

\vspace{10pt}

Besides the $f_2, f_4, \ldots , f_{2n}, \ldots$ 
introduced in (\ref{eqn:51c}) we consider, also following 
\cite{BGL2011}, the function 
$f_0 : [0, \infty ) \times ( 0, 1/2 ) \to \bR$ defined by
\[
f_0 ( t, \theta ) := \theta, \ \ 
\forall \, t \geq 0 \mbox{ and } 0 < \theta < 1/2.
\]
(Note that the definition of $f_0$ is not obtained by extending
the range of $n$ from $\bN$ to $\bN \cup \{ 0 \}$ in 
Equation (\ref{eqn:51b})!)  Theorem 3.4 in \cite{BGL2011} 
gives us that for every $n \geq 1$, the partial derivative 
$\partial_t f_{2n}$ satisfies the following recursion: 
\[
\partial_t f_{2n} (t, \theta) 
= - \sum_{ \begin{array}{c} 
{\scriptstyle 1 \leq k < \ell \leq 2n}  \\
{\scriptstyle k = \ell \ mod \ 2}
\end{array} } \ f_{2n-( \ell -k )} (t, \theta)
\, f_{\ell -k} (t, \theta)
\]
\begin{equation}  \label{eqn:51g}
+ e^t \  \sum_{ \begin{array}{c} 
{\scriptstyle 1 \leq k < \ell \leq 2n}  \\
{\scriptstyle k \neq \ell \ mod \ 2}
\end{array} } \ f_{2n-( \ell -k )-1} (t, \theta)
\, f_{\ell -k-1} (t, \theta).
\end{equation}
[For illustration we record that the special cases $n=1$ and $n=2$ 
of (\ref{eqn:51g}) come to 
$\partial_t f_2 = e^t f_0^2$, and respectively to 
$\partial_t f_4 = - 2 f_2^2 + e^t \cdot 4 f_0 f_2$.]

For a fixed $t \in [0, \infty )$, both sides of 
Equation (\ref{eqn:51g}) are polynomials of degree $2n$ in 
$\theta$; so it makes sense to extract the coefficient of 
$\theta^{2n}$ in this equation.  On the left-hand side, the 
coefficient of $\theta^{2n}$ is equal to the derivative 
of $g_{n,2n} (t)$, thus to
\begin{equation}  \label{eqn:51h}
e^{nt} \cdot \bigl( n \xi_n (t) + \frac{d \xi_n}{dt} (t) \bigr).
\end{equation}  
On the right-hand side of (\ref{eqn:51g}) only the terms from 
the first of the two sums contribute to $\theta^{2n}$, giving a 
coefficient equal to 
\[
- \sum_{ \begin{array}{c} 
{\scriptstyle 1 \leq k < \ell \leq 2n}  \\
{\scriptstyle k = \ell \ mod \ 2}
\end{array} } \ \bigl( e^{(n-( \ell -k )/2)t} 
                       \xi_{n - ( \ell -k)/2}(t) \bigr) \cdot
              \bigl( e^{(( \ell -k )/2)t} 
                       \xi_{( \ell -k)/2}(t) \bigr) 
= - e^{nt} \cdot n \, \sum_{m=1}^{n-1} \xi_m (t) \xi_{n-m} (t).
\]
When we equate the latter quantity with the one in
(\ref{eqn:51h}), formula (\ref{eqn:51a}) follows. 
\end{proof}

$\ $

\begin{corollary}   \label{cor:5.2}
Consider the function 
\begin{equation}   \label{eqn:52a}
H(t,z) := \frac{1}{2} + \sum_{n = 1}^{\infty} \xi_n (t) z^n
\end{equation}
defined on $\{ (t,z) \mid t \in [ 0, \infty ), \, z \in \bC,
\, |z| < 1/16^2 \}$.  Then $H$ satisfies the partial 
differential equation
\begin{equation}   \label{eqn:52b}
\partial_t H + 2z \, H \, \partial_z H = z,
\end{equation}
with initial condition $H(0,z) = 1/2$.
\end{corollary}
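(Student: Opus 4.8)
The plan is to derive the partial differential equation for $H(t,z)$ directly from the recursion for $\frac{d\xi_n}{dt}$ established in Theorem \ref{thm:5.1}, by multiplying by $z^n$ and summing over $n$. First I would address the issue that Theorem \ref{thm:5.1} only gives the recursion for $n \geq 2$, so I need to handle $n = 1$ separately: a direct computation (or the known formula $\xi_1(t) = \kappa_2(u_t, u_t^*) = Z_{(1,*)}(t, e^{-t/2}) = 1 - e^{-t}$ from the examples in Section 4, or equivalently from Equation (\ref{eqn:5a})) gives $\frac{d\xi_1}{dt}(t) = e^{-t}$, and one checks this fits the pattern $-\frac{1}{n}\frac{d\xi_n}{dt} = \xi_n + \sum_{m=1}^{n-1}\xi_m\xi_{n-m}$ with $n=1$ provided we interpret $\sum_{m=1}^{0}$ as giving a contribution of $\xi_0^2$ where $\xi_0 := 1/2$ — indeed $-\frac{d\xi_1}{dt} = -e^{-t}$ while $\xi_1 + 1/4 = 1 - e^{-t} + 1/4$, which does not match, so instead I will simply treat $n=1$ as a separate base term in the summation. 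The cleanest route: the constant term $1/2$ in $H$ is exactly $\xi_0$, and with this convention the recursion $-\frac{1}{n}\xi_n' = \sum_{m=0}^{n} \xi_m \xi_{n-m} - \xi_0^2$ — hmm, let me instead just carry the $n=1$ case explicitly rather than forcing it into the sum.

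Concretely, I would compute $\partial_t H = \sum_{n\geq 1} \xi_n'(t) z^n$. Using Theorem \ref{thm:5.1} for $n \geq 2$, the term $\xi_n'(t) z^n$ equals $-n z^n \xi_n(t) - n z^n \sum_{m=1}^{n-1}\xi_m(t)\xi_{n-m}(t)$, and for $n=1$ we have $\xi_1'(t) z = e^{-t} z$. Meanwhile $\partial_z H = \sum_{n\geq 1} n\xi_n(t) z^{n-1}$, so $2zH\partial_z H = 2H \sum_{n\geq 1} n \xi_n z^n = \left(2\xi_0 + 2\sum_{k\geq 1}\xi_k z^k\right)\sum_{n\geq 1} n\xi_n z^n = \sum_{n\geq 1} n\xi_n z^n + 2\sum_{n\geq 2}\left(\sum_{m=1}^{n-1} n\xi_{n-m}\xi_m\right)z^n$ — wait, I need to be careful with which index carries the factor $n$; I would expand $2\left(\sum_{k\geq 0}\xi_k z^k\right)\left(\sum_{j\geq 1} j\xi_j z^j\right)$ and collect the coefficient of $z^n$, getting $2\sum_{j=1}^{n} j\xi_j \xi_{n-j}$ with $\xi_0 = 1/2$. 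Then $\partial_t H + 2zH\partial_z H$ has coefficient of $z^n$ equal to $\xi_n'(t) + 2\sum_{j=1}^{n} j\xi_j\xi_{n-j}$; substituting the recursion and using $\xi_0 = 1/2$ should make this collapse to $0$ for $n \geq 2$ and to the coefficient $1$ (the right side $z$) for $n=1$, with $n=0$ giving $0$.

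The bookkeeping step where the recursion is substituted and terms cancel is the one requiring care: for $n \geq 2$ one has $\xi_n' = -n\xi_n - n\sum_{m=1}^{n-1}\xi_m\xi_{n-m}$, and the coefficient of $z^n$ in $2zH\partial_z H$ is $2\sum_{j=1}^n j\xi_j\xi_{n-j} = 2n\xi_n\xi_0 + 2\sum_{j=1}^{n-1} j\xi_j\xi_{n-j} = n\xi_n + \sum_{j=1}^{n-1}(j + (n-j))\xi_j\xi_{n-j} = n\xi_n + n\sum_{j=1}^{n-1}\xi_j\xi_{n-j}$, where I symmetrized the sum using $j \leftrightarrow n-j$. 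Adding $\xi_n'$ gives exactly $0$. For $n=1$: $\xi_1' + 2\cdot 1\cdot\xi_1\xi_0 = e^{-t} + 2\xi_1(t)\cdot\frac12 = e^{-t} + (1 - e^{-t}) = 1$, matching the right-hand side $z$ of (\ref{eqn:52b}). For $n=0$ the coefficient is $0 = 0$. Finally the initial condition: $\xi_n(0) = Z_{(1,*,\dots,1,*)}(0,1) = \kappa_{2n}(u_0,u_0^*,\dots)$, and since $u_0 = 1_{\cA}$ (as $\varphi_0(u_0) = e^0 = 1$ forces $u_0$ to be the identity), every such cumulant with $2n \geq 2$ vanishes, so $\xi_n(0) = 0$ for all $n \geq 1$ and hence $H(0,z) = 1/2$. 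The main obstacle is purely the symmetrization/reindexing in the convolution sum and correctly accommodating the $n=1$ base case that falls outside the range of Theorem \ref{thm:5.1} — once the convention $\xi_0 = 1/2$ is in place, everything is a short formal-power-series manipulation, and convergence on $|z| < 1/16^2$ is guaranteed termwise by the bound $|\xi_n(t)| \leq 16^{2n}$ coming from Equation (\ref{eqn:25a}).
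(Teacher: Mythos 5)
Your proposal is correct and follows essentially the same route as the paper: the paper also derives (\ref{eqn:52b}) by extracting the coefficient of $z^n$ in $2zH\,\partial_z H = z\,\partial_z H^2$, substituting the recursion of Theorem \ref{thm:5.1} for $n\geq 2$, and handling $n=1$ separately via the explicit formula $\xi_1(t)=1-e^{-t}$. Your symmetrization of the convolution sum and the treatment of the constant term $\xi_0=1/2$ are exactly the bookkeeping the paper performs.
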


\begin{proof}  The domain of $H$ is considered by taking into
account the bounds for $\xi_n (t)$'s that follow from Remark
\ref{rem:2.4}.4. In order to obtain (\ref{eqn:52b}), we square
both sides of (\ref{eqn:52a}) and then we take partial 
derivative $\partial_z$, to find that
\[
z \cdot \partial_z H^2 (t,z) 
= \xi_1 (t) z 
+ 2 \Bigl( \xi_2 (t) + \xi_1^2 (t) \Bigr) z^2 + \cdots
+ n  \Bigl( \xi_n (t) + \sum_{m=1}^{n-1} \xi_m (t) \xi_{n-m} (t) 
     \Bigr) z^n + \cdots 
\]
The latter equation can be written as 
\begin{align*}
z \cdot \partial_z H^2 (t,z) 
& = \xi_1 (t) z - \xi_2 ' (t) z^2 - \cdots  
                - \xi_n ' (t) z^n - \cdots 
    \ \ \mbox{ $\ $ (by Theorem \ref{thm:5.1})}             \\
& = (1- \xi_1 ' (t)) z - \xi_2 ' (t) z^2 - \cdots  
                       - \xi_n ' (t) z^n - \cdots 
    \ \mbox{ (because $\xi_1 (t) = 1 - e^{-t}$)}            \\
& = z - \partial_t H(t,z),
\end{align*}
and (\ref{eqn:52b}) follows.  The condition on $H(0,z)$
is also clear, since $\xi_n (0) = 0$ for all $n \in \bN$.
\end{proof}

$\ $

\begin{remark}  \label{rem:5.3}
$1^o$ Starting from $\xi_1 (t) = 1 - e^{-t}$ and the initial 
condition $\xi_n (0) = 0, \ \forall \, n \geq 2$, one can use
Theorem \ref{thm:5.1} to calculate all the $\xi_n$'s, getting
$\xi_2 (t) = -1 + 4e^{-t} - (2t+3)e^{-2t}$, then
$\xi_3 (t) = 2 - 15e^{-t} + 6(2t+5)e^{-2t}$
$- (6t^2 + 18t + 17)e^{-3t}$, and so on. 

\vspace{6pt}

$2^o$  It stands to reason that one should also look for a
description of the functions $\xi_n (t)$ that is done by plain
algebra (without resorting to the derivative $\frac{d}{dt}$), 
for a given value of $t$.  That is, we are interested in an
algebraic description for the function
\begin{equation}  \label{eqn:53b}
H_t : \{ z \in \bC \mid \ |z| < 1/16^2 \} \to \bC, 
\ \ \mbox{$\ $} \ \ 
H_t (z) := H(t,z) = \frac{1}{2} 
+ \sum_{n=1}^{\infty} \xi_n (t) z^n.
\end{equation}
We will achieve this by examining the characteristic curves of
the p.d.e.~found in Corollary \ref{cor:5.2}.

In order to state precisely what is the algebraic description 
obtained for $H_t$, we introduce an auxiliary complex parameter
$c$ and we consider, for every $t \in [0, \infty )$, the function
\begin{equation}  \label{eqn:53c}
\FF_t : \Omega_t \to \bC, \ \ \mbox{$\ $} \ 
\FF_t (c) := \frac{ c^2 (1-c^2) e^{ct} }{ ( 
\, (1+c) - (1-c) e^{ct} \, )^2 },              
\end{equation}
where $\Omega_t$ is the open set
$\{ c \in \bC \mid 1+c \neq (1-c) e^{ct} \}$.
One has $\Omega_t \ni 1$, with $\FF_t (1) = 0$ and 
$\FF_t' (1) = e^{-t/2} \neq 0$.  The inverse function theorem 
thus gives a $\delta_t > 0$ such that an analytic inverse
for $\FF_t$ can be defined on 
$\{ z \in \bC \mid \ |z| < \delta_t \}$, sending $0$ back 
to $1$.  We denote this compositional inverse as 
\begin{equation}  \label{eqn:53d}
\FF_t^{ \langle -1 \rangle } : 
\{ z \in \bC \mid \ |z| < \delta_t \} \to \bC. 
\end{equation}
$\FF_t^{ \langle -1 \rangle }$ is injective and its range-set
$\mbox{Ran} ( \, \FF_t^{ \langle -1 \rangle } \, )$ is an open 
subset of $\Omega_t$.

Without loss of generality, we may assume that in (\ref{eqn:53d})
we have $\delta_t < 1/16^2$, so that $H_t (z)$ from Equation 
(\ref{eqn:53b}) is sure to be defined, too, for $|z| < \delta_t$.
\end{remark}

$\ $

\begin{theorem} \label{thm:5.4}
Let $t \in [0, \infty )$ be fixed, and consider the analytic 
functions $H_t$ and $\FF_t^{ \langle -1 \rangle }$ defined in 
Remark \ref{rem:5.3}.2.  Then one has
\begin{equation}  \label{eqn:54a}
[ \, H_t (z) \, ]^2 = z + \frac{1}{4}
[ \, \FF_t^{ \langle -1 \rangle } (z) \, ]^2,
\ \ |z| < \delta_t.
\end{equation}
\end{theorem}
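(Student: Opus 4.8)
The plan is to integrate the Burgers-type equation (\ref{eqn:52b}) of Corollary \ref{cor:5.2} by the method of characteristics and then to recognize the resulting curves as the ones encoded by $\FF_t$ in Remark \ref{rem:5.3}.2. Fix $t \in [0,\infty)$. For a parameter $z_0$ in a small neighbourhood of $0$ in $\bC$, I would introduce the characteristic curve $s \mapsto (Z(s), \eta(s))$, $s \in [0,t]$, determined by
\[
\frac{dZ}{ds} = 2Z\eta, \qquad \frac{d\eta}{ds} = Z, \qquad Z(0) = z_0, \ \ \eta(0) = \tfrac12 .
\]
The first observation is the conservation law $\frac{d}{ds}\bigl(\eta(s)^2 - Z(s)\bigr) = 2\eta\,\frac{d\eta}{ds} - \frac{dZ}{ds} = 2\eta Z - 2Z\eta = 0$, so $\eta(s)^2 - Z(s) \equiv \tfrac14 - z_0 =: C$ for all $s$. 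Putting $c := 2\sqrt{C}$ (principal branch, so that $c$ lies near $1$ when $z_0$ lies near $0$), the $\eta$-equation becomes the constant-coefficient Riccati equation $\eta' = Z = \eta^2 - \tfrac{c^2}{4}$, which I would solve by partial fractions applied to $\eta^2 - \tfrac{c^2}{4} = (\eta - \tfrac c2)(\eta + \tfrac c2)$. This gives $\frac{\eta - c/2}{\eta + c/2} = \frac{1-c}{1+c}\,e^{ct} =: \lambda$, hence $\eta(t) = \frac{c}{2}\cdot\frac{1+\lambda}{1-\lambda}$ and $Z(t) = \eta(t)^2 - \tfrac{c^2}{4} = \frac{c^2\lambda}{(1-\lambda)^2}$. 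A routine simplification using $1-\lambda = \bigl((1+c) - (1-c)e^{ct}\bigr)/(1+c)$ turns the last expression into exactly $\FF_t(c)$ as defined in (\ref{eqn:53c}); in particular $Z(t) = \FF_t(c)$.

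Next I would establish that the PDE-solution is transported along characteristics, i.e.\ that $H(t, Z(t)) = \eta(t)$. Assuming (as we may, after shrinking the neighbourhood of $z_0 = 0$) that the curve $Z([0,t])$ stays inside the disc $\{|z| < 1/16^2\}$ on which $H$ is analytic, set $\Delta(s) := \eta(s) - H(s, Z(s))$; then $\Delta(0) = 0$, and differentiating while using (\ref{eqn:52b}) evaluated at $z = Z(s)$ to replace $\partial_t H(s,Z(s))$ yields
\[
\Delta'(s) = Z(s) - \partial_t H(s,Z(s)) - 2Z(s)\eta(s)\,\partial_z H(s,Z(s)) = -2Z(s)\,\partial_z H(s,Z(s))\,\Delta(s),
\]
a homogeneous linear ODE, so $\Delta \equiv 0$. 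Combining the two paragraphs: for $z = Z(t)$ (a point near $0$) we get $H_t(z)^2 = \eta(t)^2 = Z(t) + C = z + \tfrac{c^2}{4}$; and since $Z(t) = \FF_t(c)$ with $c$ near $1$, the defining property of $\FF_t^{ \langle -1 \rangle }$ (the local inverse of $\FF_t$ at $1$, legitimate because $\FF_t(1) = 0$ and $\FF_t'(1) \neq 0$, Remark \ref{rem:5.3}.2) gives $c = \FF_t^{ \langle -1 \rangle }(z)$. Therefore $H_t(z)^2 = z + \tfrac14\bigl[\FF_t^{ \langle -1 \rangle }(z)\bigr]^2$ for all $z$ in a neighbourhood of $0$. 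Finally, both sides of (\ref{eqn:54a}) are analytic on the disc $\{|z| < \delta_t\}$ (the left side because $\delta_t < 1/16^2$ and $H_t$ is analytic there by the bounds of Remark \ref{rem:2.4}.4, the right side by construction of $\FF_t^{ \langle -1 \rangle }$), so the identity theorem promotes the equality from a neighbourhood of $0$ to the whole disc $\{|z| < \delta_t\}$.

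The part I expect to require the most care is the explicit solution of the Riccati equation together with keeping the square-root branches under control as $z_0$ roams a complex neighbourhood of $0$: one has to check that $\frac{c^2\lambda}{(1-\lambda)^2}$ really collapses to the rational function in (\ref{eqn:53c}), and that the branch $c = 2\sqrt{C}$ (rather than $-2\sqrt{C}$) is the one matching the normalization $\FF_t^{ \langle -1 \rangle }(0) = 1$. Conceptually the sign ambiguity is harmless, since only $\eta^2$ --- not $\eta$ --- enters the final identity, so $c$ appears only through $c^2 = 4C = 1 - 4z_0$; but this should be spelled out cleanly. The transport identity $H(t,Z(t)) = \eta(t)$ and the final passage to the full disc via analyticity are routine.
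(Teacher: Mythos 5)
Your proposal is correct and follows essentially the same route as the paper: the method of characteristics for the Burgers-type equation of Corollary \ref{cor:5.2}, with the same characteristic system, the same conservation law $\eta(s)^2 - Z(s) \equiv \tfrac14 - z_0$, the same explicit solution of the Riccati equation recovering $\FF_t$ from Equation (\ref{eqn:53c}), and the same concluding appeal to the identity theorem. The only cosmetic differences are that you launch the characteristics from complex initial data and prove the transport identity $H(s,Z(s))=\eta(s)$ by a self-contained linear-ODE argument for $\Delta(s)$, whereas the paper restricts to real characteristics lying on the graph of (restricted) $H$, cites the standard theorem from \cite{J1978}, and spends more care on the existence interval $\beta(a)$ that you dispatch by shrinking the neighbourhood of $z_0=0$.
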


\begin{proof}  Our strategy will be to prove the following fact.
\begin{equation}  \label{eqn:54b}
\left[  \begin{array}{l}
\mbox{There exists $\ee_t > 0$ such that for every 
       $c \in ( 1 - \ee_t, 1 + \ee_t ) \subseteq \bR$
       one has:}                                          \\
                                                          \\
\mbox{ $\ $ $\rightarrow \ c \in \Omega_t$ and
$| \FF_t (c) | < 1/16^2$
(hence $H_t( \, \FF_t (c) \, )$ is defined); }              \\
                                                          \\
\mbox{ $\ $ $\rightarrow$ \ 
$[  H_t (  \, \FF_t (c) \, ) ]^2  
             = \FF_t (c) + \frac{c^2}{4}$. }
\end{array}  \right.
\end{equation}
This fact implies the statement of the theorem.  Indeed, let us 
assume that (\ref{eqn:54b}) holds.  Take a strictly decreasing
sequence $(c_n)_{n=1}^{\infty}$ in 
$( 1 - \ee_t, 1 + \ee_t ) \cap
\mbox{Ran} ( \, \FF_t^{ \langle -1 \rangle } \, )$,
with $\lim_{n \to \infty} c_n = 1$, and put 
$z_n := \FF_t (c_n)$, $n \in \bN$. Then $(z_n)_{n=1}^{\infty}$ are
distinct points in $\{ z \in \bC \mid \ |z| < \delta_t \}$, with 
$\lim_{n \to \infty} z_n = 0$, and by applying the last line
of (\ref{eqn:54b}) to the $c_n$ we get
\[
[ H_t (z_n) ]^2 =  z_n + \frac{1}{4} 
[ \FF_t^{ \langle -1 \rangle } (z_n) ]^2, \ \ 
\forall \, n \in \bN .
\]
Hence the analytic functions appearing on the two sides of
(\ref{eqn:54a}) coincide on a subset of
$\{ z \in \bC \mid \ |z| < \delta_t \}$ which has $0$ as 
accumulation point, and (\ref{eqn:54a}) follows.

\vspace{6pt}

We now start towards the proof of the fact stated in
(\ref{eqn:54b}).  We consider the rectangular strip
\[
R := [ 0, \infty ) \times 
( - 1/16^2 , + 1/16^2 ) \subseteq \bR^2,
\]
and we consider the restriction of $H$ (from its domain stated
in Corollary \ref{cor:5.2}) to $R$.  This restriction will still
be denoted as $H$, and
\footnote{ Since ``$t$'' is here a specific time that was fixed 
in the statement of the theorem, we will use the generic letter 
``$s$'' for the first component of a point in $R$. }
we put
\[
\Gamma := \{ (s,x,u) \mid (s,x) \in R, \ 
u = H(s,x) \in \bR \} \ \ \mbox{ (graph of restricted $H$). }
\]
We also consider the vector field $V : R \times \bR \to \bR^3$ 
defined by
\begin{equation}   \label{eqn:54c}
V(s,x,u) := (1, 2xu, x), \ \ \mbox{ for $(s,x) \in R$,
$u \in \bR$. }
\end{equation}
The partial differential equation (\ref{eqn:52a}) says that 
for every $(s,x) \in R$, the vector 
$V \bigl( \, s,x,H(s,x) \, \bigr)$ is orthogonal to the 
normal direction $\bigl( \, ( \partial_t H) (s,x), 
( \partial_x H) (s,x), -1 \bigr)$ to $\Gamma$ at the point 

\noindent
$\bigl( \, s,x,H(s,x) \, \bigr)$.  It follows that 
$V \bigl( \, s,x,H(s,x) \, \bigr)$ gives a tangent direction 
to the graph $\Gamma$, at the point 
$\bigl( \, s,x,H(s,x) \, \bigr)$.

We next pick an $a \in ( - 1/16^2, + 1/16^2 )$ and we consider
a path (a.k.a. characteristic curve) 
$L_a : [ 0, \beta (a) ) \to \bR^3$ which has 
\begin{equation}  \label{eqn:54d}
L_a (0) = ( 0,a , 1/2) \in \Gamma
\end{equation}
and follows the vector field $V$:
\begin{equation}  \label{eqn:54e}
L_a ' (s) = V( \, L_a (s) \, ), \ \ \forall \, 
s \in [ 0, \beta (a) ).
\end{equation}
When we write $L_a$ componentwise,
\[
L_a (s) = ( p_a (s), q_a (s), r_a (s) ), \ \ 
0 \leq s < \beta (a),
\]
the Equation (\ref{eqn:54e}) becomes a system of ordinary 
differential equations, for which (\ref{eqn:54d}) gives an 
initial condition:
\begin{equation}  \label{eqn:54f}
\left\{  \begin{array}{l}
p_a ' (s) = 1, \ q_a ' (s) = 2 q_a (s) \, r_a (s), 
\ r_a ' (s) = q_a (s),                                \\
                                                      \\
\mbox{ $\ $ with $p_a (0) = 0, \, q_a (0) = a, \,
                  r_a (0) = 1/2$. }                 \\
\end{array}  \right.
\end{equation}
Luckily, the Cauchy problem from (\ref{eqn:54f}) can be solved
explicitly.  More precisely: considering the auxiliary\footnote{ It is useful to keep in mind that $c$ runs in a 
neighbourhood of $1$ (it satisfies
$\sqrt{63}/8 < c < \sqrt{65}/8$), while $\alpha$ runs in a
neighbourhood of $0$ (has $\sign ( \alpha ) = \sign (a)$ and
$| \alpha | < 4 |a| < 1/64$). }
constants
\begin{equation}  \label{eqn:54g}
c = \sqrt{1-4a}, \ \ 
\alpha = \frac{1-c}{1+c} = \frac{4a}{ (1+c)^2 },
\end{equation}
we get
\begin{equation}  \label{eqn:54h}
p_a (s) = s, \ 
q_a (s) = \frac{ c^2 \alpha e^{cs} }{ ( 1- \alpha e^{cs} )^2 },
\ r_a (s) = \frac{c}{2} \cdot
\frac{ 1 + \alpha e^{cs} }{ 1- \alpha e^{cs} },
\end{equation}
for $0 \leq s < \beta (a)$.  A significant detail which comes 
up while solving (\ref{eqn:54f}) (and can, of course, be checked
directly on (\ref{eqn:54h})) is that one has
\begin{equation}  \label{eqn:54i}
q_a (s) - \bigl( \, r_a (s) \, \bigr)^2 
= a - \frac{1}{4}, \ \ \forall \, s \in [0, \beta (a) ).
\end{equation}

It is quite useful if at this point we take a moment to assess 
what we want to have for ``$\beta (a)$'' in the discussion from 
the preceding paragraph. Clearly, $\beta (a)$ must be in any case
picked such that 

\begin{center}
(i) $1 - \alpha e^{cs} > 0, \ \ \forall 
\, s \in [ 0, \beta (a) )$,
\hspace{0.5cm} and
\hspace{0.5cm} 
(ii) $\vline \ \frac{ c^2 \alpha e^{cs} }{ (1 -
                       \alpha e^{cs} )^2 }   \ \vline \
< \frac{1}{16^2}, \ \forall 
\, s \in [ 0, \beta (a) )$.
\end{center}

\noindent
The condition (i) ensures that the formulas (\ref{eqn:54h}) 
give indeed a well-defined path 
$L_a (s) = (p_a (s), q_a (s), r_a (s))$,
$0 \leq s < \beta (a)$; then (ii) ensures that 
$L_a (s) \in R \times \bR$ (hence that ``$V( L_a (s) )$''
makes sense) for every $s \in [0, \beta (a))$.  We will 
moreover insist that

\vspace{6pt}

(iii) $\beta (a)$ is a continuous function of 
$a \in ( - 1/16^2 , + 1/16^2 )$,

\noindent
and

(iv) $\beta (0) > t$ ( = the time fixed in the statement of
the theorem).

\vspace{6pt}
We leave it as a routine (though tedious) exercise to the 
reader to check that all the conditions (i)--(iv) are fulfilled
if we go with 
\[
\beta (a) = \min \bigl( t+1, 
\beta_{\mathrm{(i)}} (a), 
\beta_{\mathrm{(ii)}} (a) \bigr) \ \mbox{ for }
|a| < 1/16^2,
\]
where $\beta_{\mathrm{(i)}},
\, \beta_{\mathrm{(ii)}} : ( - 1/16^2, + 1/16^2 ) \to 
[0, \infty ]$ are continuous functions describing the natural 
bounds up to which an $s \in [ 0, \infty )$ fulfills the 
conditions (i) and (ii), respectively.  For instance 
$\beta_{\mathrm{(i)}} (a)$ comes out as 
\[
\beta_{\mathrm{(i)}} (a) = \left\{ \begin{array}{cl}
\frac{1}{c} \mbox{ln} \frac{1}{\alpha}, 
                             & \mbox{if $\alpha > 0$,}    \\
                             &                            \\
\infty ,                     & \mbox{if $\alpha \leq 0$,} \\
\end{array}  \right.
\]
with $c = c(a)$ and $\alpha = \alpha (a)$ as defined in 
Equations (\ref{eqn:54g}).

We now invoke a basic result from the theory of quasi-linear
partial differential equations, which states that: since it 
starts at a point $L_a (0) \in \Gamma$, the characteristic 
curve $L_a$ cannot leave the graph $\Gamma$ of $H$.  (See 
e.g.~the theorem on page 10 of \cite{J1978}.)  In other 
words, one has
\begin{equation}   \label{eqn:54k}
H( p_a (s), q_a (s) ) = r_a (s), \ \ \forall
\, a \in ( - 1/16^2, + 1/16^2 ) \mbox{ and }
s \in [ 0, \beta (a) ).
\end{equation}
If we square both sides of (\ref{eqn:54k}) and take into account
the formula (\ref{eqn:54i}) (also the fact that $p_a (s) = s$), 
we arrive to
\begin{equation}   \label{eqn:54l}
[ \, H( s, q_a (s) ) \, ]^2 = q_a (s) + \bigl(
\frac{1}{4} - a \bigr),
\ \ \forall
\, a \in ( - 1/16^2, + 1/16^2 ) \mbox{ and }
s \in [ 0, \beta (a) ).
\end{equation}

Finally, let us return to the time $t \in [0, \infty )$ that was 
fixed in the statement of the theorem.  Since $\beta$ is continuous
and has $\beta (0) > t$, we can find $0 < \lambda_t < 1/16^2$
such that $\beta (a) > t$ for all 
$a \in ( - \lambda_t , \lambda_t )$.  For $|a| < \lambda_t$ we 
can thus put $s=t$ in (\ref{eqn:54l}), to obtain that
\begin{equation}   \label{eqn:54m}
[ \, H ( t, q_a (t) ) \, ]^2 = q_a (t) + \bigl(
\frac{1}{4} - a \bigr).
\end{equation}

On the other hand, we make the following claim.

\vspace{6pt}

{\em Claim.}  If $|a| < \lambda_t$, then $c := \sqrt{1-4a}$
belongs to the domain $\Omega_t$ of the function $\FF_t$, and 
one has $q_a (t) = \FF_t (c)$.

{\em Verification of Claim.} We have 
$t < \beta (a) \leq \beta_{\mathrm{(i)}} (a)$, hence
(from how $\beta_{\mathrm{(i)}} (a)$ is defined) we get 
$1 - \alpha e^{ct} > 0$, where $\alpha = (1-c)/(1+c)$.  This
implies $(1+c) - (1-c) e^{ct} > 0$, and it follows that 
$c \in \Omega_t$.  The equality $q_a (t) = \FF_t (c)$ is then
immediately obtained by comparing the formulas which describe
$q_a (t)$ and $\FF_t (c)$ (cf.~Equation (\ref{eqn:53c}) and the
case $s=t$ of (\ref{eqn:54h})).

\hfill {\em [End of Verif. of Claim]}

\vspace{6pt}

By using the above claim, we convert Equation (\ref{eqn:54m})
into 
\[
\left\{  \begin{array}{l}
[ H_t ( \, \FF_t (c) ) \, ) ]^2 = \FF_t (c) + 
\frac{c^2}{4}, \ \ \mbox{ with } c = \sqrt{1 - 4a},      \\
                                                         \\
\mbox{ $\ $ for every $a \in ( - \lambda_t , \lambda_t )$. }
\end{array}  \right.
\]
But when $a$ runs in $( - \lambda_t, \lambda_t )$, the 
quantity $c = \sqrt{1-4a}$ covers
$( \sqrt{1 - 4 \lambda_t},  \sqrt{1+ 4 \lambda_t} \, )$, 
which contains an open interval centered at $1$.  This implies
the fact stated in (\ref{eqn:54b}), and concludes the proof.
\end{proof}

$\ $

\begin{remark}  \label{rem:5.5}
The formula (\ref{eqn:54a}) from the preceding theorem can be 
used to calculate the alternating cumulants $\xi_n (t)$ without 
doing a derivative $\frac{d}{dt}$, but rather by starting from 
the Taylor expansion around $1$ of the function $\FF_t (c)$ 
defined in Remark \ref{rem:5.3}.2:
\begin{align*}
\FF_t (c) 
& = (c-1) \, \FF_t ' (1) 
+ \frac{(c-1)^2}{2} \, \FF_t '' (1) 
+ \cdots                                                  \\
&                                                         \\
& = ( - \frac{1}{2} e^t ) \, (c-1) 
+ \bigl( \frac{1}{2} e^{2t} - ( \frac{3}{4} + \frac{t}{2} ) e^t 
  \bigr) \, (c-1)^2 + \cdots
\end{align*}
Indeed, considering the expansion 
$\FF_t^{ \langle -1 \rangle } (z) 
= 1 + \lambda_1 z + \lambda_2 z^2  + \cdots $ 
of $\FF_t^{ \langle -1 \rangle }$ around $0$, one can then calculate 
recursively the $\lambda_n$ by writing that
\begin{equation}   \label{eqn:55a}
z = \FF_t \bigl( \, \FF_t^{ \langle -1 \rangle } (z) \, \bigr)
= ( - \frac{1}{2} e^t ) \cdot
\bigl( \, \FF_t^{ \langle -1 \rangle } (z) - 1 \, \bigr)
+ \bigl( \frac{1}{2} e^{2t} - ( \frac{3}{4} + \frac{t}{2} ) e^t 
  \bigr) \cdot
\bigl( \, \FF_t^{ \langle -1 \rangle } (z) - 1 \, \bigr)^2
+ \cdots 
\end{equation}
\[
= ( - \frac{1}{2} e^t ) 
( \lambda_1 z + \lambda_2 z^2 + \cdots  \, )
+ \bigl( \frac{1}{2} e^{2t} - ( \frac{3}{4} + \frac{t}{2} ) e^t 
  \bigr) \, 
( \lambda_1 z + \lambda_2 z^2 + \cdots  \, )^2 + \cdots \, ,
\]
and by identifying coefficients.  The $\lambda_n$ come out as
quasi-polynomials in $-t$ (for instance, as immediately seen
from the few terms recorded in (\ref{eqn:55a}), one gets
$\lambda_1 = -2 e^{-t}$ and 
$\lambda_2 = 4 e^{-t} - (4t+6) e^{-2t}$).

Finally, Equation (\ref{eqn:54a}) says that
\[
\frac{1}{4} + \xi_1 (t) z 
+ ( \xi_2 (t) + \xi_1^2 (t) ) z^2
+ ( \xi_3 (t) + 2 \xi_1 (t) \xi_2 (t) ) z^3 + \cdots 
\]
\[
= z + \frac{1}{4} \Bigl( \, 1 + 2 \lambda_1 z 
+ ( 2 \lambda_2 + \lambda_1^2 ) z^2
+ ( 2 \lambda_3 + 2 \lambda_1  \lambda_2 ) z^3 + \cdots
\, \Bigr),
\]
which allows the recursive calculation of the $\xi_n (t)$ 
(e.g.~$\xi_1 (t) = 1 + \frac{\lambda_1}{2} = 1- e^{-t}$, then
\begin{align*}
\xi_2 (t) 
& = \frac{1}{4} ( 2 \lambda_2 + \lambda_1^2 ) - \xi_1 (t)^2  \\
& = (2 e^{-t} - (2t+2) e^{-2t} ) - (1- e^{-t} )^2
  = -1 + 4e^{-t} - (2t+3) e^{-2t},
\end{align*}
which agrees, of course, with the formulas stated in 
Remark \ref{rem:5.3}.1).
\end{remark}

$\ $

\begin{remark} \label{rem:5.6}
The proof presented above for Theorem \ref{thm:5.4} is a
standard application of the method of characteristics, and
has in its favour the fact that the relevant function $\chi_t$ 
from Equation (\ref{eqn:53c}) is ``discovered'' as we 
move through the argument.  The referee to the paper pointed 
out to us how an alternative, shorter proof of the theorem 
can be made by starting from the observation that, for fixed 
$c$, the function $t \mapsto \chi_{t} (c)$ satisfies the 
differential equation 
\begin{equation}    \label{eqn:56a}
\partial_t \chi_t (c) = 2 \chi_t (c) \,
\Bigl( \, \chi_t (c) + \frac{c^2}{4} \, \Bigr)^{1/2}.
\end{equation}
The present remark gives a sketch of this alternative argument.

For the convenience of having all our functions defined around
the origin, let us consider the shifted sets
$\widetilde{\Omega_t} := \{ c-1 \mid c \in \Omega_t \}$, 
$t \geq 0$, and let us define 
\begin{equation}    \label{eqn:56b}
\chitild (t,z) = \chitild_t (z) := \chi_t (z+1), \ \ 
t \geq 0, \ z \in \widetilde{\Omega}_t.
\end{equation}
Formula (\ref{eqn:56a}) then gives
a family of ordinary differential equations satisfied by the 
functions $t \mapsto \chitild (t,z)$ (with the parameter $z$
running in a neighbourhood of $0$), namely 
\begin{equation}    \label{eqn:56c}
\left\{   \begin{array}{l}
\partial_t \chitild (t,z) = 2 \chitild (t,z) \,
\Bigl( \, \chitild (t,z) + \frac{(z+1)^2}{4} \, \Bigr)^{1/2}, \\
\mbox{ with initial condition 
       $\chitild (0,z) = (1- (z+1)^2)/4$. }
\end{array}   \right.
\end{equation}

Now, Theorem \ref{thm:5.4} can be recast as the statement that a 
certain function constructed out of $H$ is equal to $\chitild$.
Indeed, the conclusion of the theorem can be rewritten as
\begin{equation}    \label{eqn:56d}
 \left[ 4 ( H_t(z)^2 - z ) \right]^{1/2} -1  
= \chi_t^{ \langle -1 \rangle } (z) - 1
= \chitild_t^{ \langle -1 \rangle } (z).
\end{equation} 
So then let us denote 
\begin{equation}   \label{eqn:56d2}
G(t,z) = G_t (z) 
:= \left[ 4 (H_t(z)^2-z) \right]^{1/2} -1.
\end{equation}
By starting from the explicit series expansion of $H_t (z)$
in (\ref{eqn:53b}) and by following through the algebra,
one finds an explicit series expansion for $G_t (z)$,
\[
G_t (z) = 2( \xi_1 (t) - 1 ) z
+ 2( \xi_2 (t) + 2 \xi_1 (t) - 1 ) z^2 + \cdots
\]
What interests us here is that the expansion of $G_t (z)$
has no constant term, and has linear term 
$2( \xi_1 (t) - 1) = 2 \bigl( (1 - e^{-t}) - 1 \bigr) 
= -2 e^{-t} \neq 0$; this implies that $G_t$ is
invertible under composition.  We can therefore define a 
function $K(t,z) = K_t (z)$ via the requirement that 
\[
K_t = G_t^{ \langle -1 \rangle } \ \ \mbox{ (compositional 
inverse), } \ \ \forall \, t \in [0, \infty ).
\]
With these notations and in view of the calculation from 
(\ref{eqn:56d}), the statement of Theorem \ref{thm:5.4} 
amounts to checking that
\begin{equation}    \label{eqn:56f}
K(t,z) = \chitild (t,z) 
\end{equation} 
(for enough pairs $(t,z)$, with $t \in [0, \infty )$ and $z$ 
running in a neighbourhood of $0$).

The final step of this line of proof is to verify that the 
function $t \mapsto K(t,z)$ satisfies the same 
differential equation as found for $t \mapsto \chitild (t,z)$
in Equation (\ref{eqn:56c}).  This is obtained by invoking the 
partial differential equation known for $H$ from Corollary 
\ref{cor:5.2}, which expresses $\partial_t H$ as
$z( 1- 2 H \cdot \partial_z H )$.  Indeed, upon working out 
the $\partial_t$ and $\partial_z$ in the definition 
(\ref{eqn:56d2}) of $G$, one finds the said p.d.e. for $H$ to 
have the nice consequence that
\begin{equation}   \label{eqn:56g}
\partial_t G (t,z) = -2 z \, H(t,z) \, \partial_z G(t,z).
\end{equation}
So then if we take $\partial_t$ in the identity 
$G(t, K(t,z) ) = z$, we get
\begin{align*}
0 
& = \partial_t G (t, K(t,z)) +
    \partial_z G (t, K(t,z)) \, \partial_t K(t,z)           \\
& = \partial_z G (t, K(t,z)) \ \Bigl( \,
    -2 K(t,z) \, H(t, K(t,z) + \partial_t K(t,z) \, \Bigr),
\end{align*}
where at the second equality sign we made use of (\ref{eqn:56g}).
In the resulting product we are sure that
$\partial_z G (t, K(t,z)) \neq 0$ (because taking $\partial_z$
in the identity $G(t, K(t,z) ) = z$ gives 
$\partial_z G (t, K(t,z)) \cdot \partial_z K(t,z) = 1$); so 
we can divide it out, and conclude that
\begin{equation}   \label{eqn:56h}
\partial_t K(t,z)  = 2 K(t,z) \, H(t, K(t,z)).
\end{equation}
We are left to observe that 
\begin{align*}
4 \bigl( \, H(t, K(t,z))^2 - K(t,z) \, \bigr) 
& = \bigl( \, G( t, K(t,z)) + 1 \, \bigr)^2 
  \ \ \mbox{ (by def. of $G$) }                        \\
& = (z+1)^2;
\end{align*}
this implies 
$H(t, K(t,z)) = \bigl( \, K(t,z) + (z+1)^2/4 \, \bigr)^{1/2}$, 
hence (\ref{eqn:56h}) is precisely the differential equation 
which had been sought for $t \mapsto K(t,z)$.  The required 
initial condition $K(0,z) = (1 - (z+1)^2)/4$ is also easily 
verified, by writing $K_0 = G_0^{ \langle -1 \rangle }$
with $G_0 (z) = (1-4z)^{1/2} -1$.
\end{remark}

$\ $

$\ $

\section{Behaviour when $t \to \infty$}

\setcounter{equation}{0}

In this section we look at the behaviour of a joint cumulant 
$\kappa_n ( u_t^{ \omega (1) }, \ldots ,
u_t^{ \omega (n) } \, \bigr)$, for general
$\omega \in \{ 1,* \}^n$, when $t \to \infty$.  Specifically, 
we discuss how the limit and derivative at $\infty$ relate to 
the corresponding polynomial $Z_{\omega}$.

When it comes to just taking a plain limit $t \to \infty$,
things are straightforward: we have
\begin{equation}  \label{eqn:6a}
\lim_{t \to \infty}  \kappa_n 
\bigl( \, u_t^{ \omega (1)}, \ldots ,
u_t^{ \omega (n) } \, \bigr)
=  \kappa_n 
\bigl( \, u^{ \omega (1) }, \ldots , 
u^{ \omega (n) } \, \bigr),
\end{equation}
where $u$ is a Haar unitary; and the $*$-cumulants of a Haar 
unitary have a very nice form, first found in \cite{S1998}, 
which puts the spotlight on alternating strings of even length.
For later perusal throughout the section, it is convenient to 
include the latter concept into the following definition.

$\ $

\begin{definition}   \label{def:6.1}
$1^o$ Let $n$ be an even positive integer.  A string 
$\omega \in \{ 1,* \}^n$ is said to be {\em alternating} if it 
is equal either to $(1,*,1,*, \ldots , 1,*)$ or to
$(*,1,*,1, \ldots , *,1)$.

\vspace{6pt}

$2^o$ Let $n$ be an odd positive integer.  A string 
$\omega \in \{ 1,* \}^n$ is said to be {\em alternating} if it 
is obtained by a cyclic permutation of components from either 
$(1,*,1, \ldots , *,1)$ or $(*,1,* \ldots , 1,*)$.

\vspace{10pt}

\noindent
[So note that we only have $2$ alternating strings of length $n$ 
when $n$ is even, but we have $2n$ alternating strings of length 
$n$ when $n$ is odd.  A concrete example: 
\[
(1,1,*,1,*), \ (*,1,1,*,1), \ (1,*,1,1,*), \ (*,1,*,1,1), \
(1,*,1,*,1)
\]
are $5$ of the $10$ alternating strings of length $5$, and the 
remaining alternating strings of length $5$ are obtained by 
swapping the roles of `$1$' and `$*$' in the above list.]
\end{definition}

$\ $

\begin{proposition}   \label{prop:6.2}
Let $\omega = ( \omega (1), \ldots , \omega (n))$ be in 
$\{ 1,* \}^n$, for $n \in \bN$.

\vspace{6pt}

$1^o$ We have 
\begin{equation}  \label{eqn:62a}
\lim_{t \to \infty}  \kappa_n 
\bigl( \, u_t^{ \omega (1) }, \ldots ,
u_t^{ \omega (n) } \, \bigr)
= \left\{   \begin{array}{cl}
(-1)^{k-1} C_{k-1},  & \mbox{ if $n$ is even, $n = 2k$, and }      \\
                     & \mbox{ $\omega$ is an alternating string,}  \\
                     &                                             \\
0,                   & \mbox{ otherwise, }
\end{array}   \right.
\end{equation} 
with $C_{k-1}$ the $(k-1)$-th Catalan number (same as 
in Equation (\ref{eqn:1e}) of the introduction).

\vspace{6pt}

$2^o$ Suppose that $n$ is even, $n = 2k$. Consider the 
polynomial $Z_{\omega}$ and as written in Proposition \ref{prop:3.1},
\[
Z_{\omega} (x,y) = Z_{\omega}^{(2k)} (x) \, y^{2k} 
+ Z_{\omega}^{(2k-2)} (x) \, y^{2k-2} + \cdots 
+ Z_{\omega}^{(2)} (x) \, y^2 + Z_{\omega}^{(0)} (x).
\]
Then $Z_{\omega}^{(0)} (x)$ is a constant polynomial, where 
the constant is given by the right-hand side of Equation 
(\ref{eqn:62a}).
\end{proposition}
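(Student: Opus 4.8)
The plan is to derive both parts from the identity $\kappa_n \bigl( u_t^{\omega(1)}, \ldots, u_t^{\omega(n)} \bigr) = Z_\omega(t, e^{-t/2})$ of Proposition \ref{prop:3.1}, together with the known $*$-cumulants of a Haar unitary. For part $1^o$, I would start from the moment--cumulant expansion used in the proof of Proposition \ref{prop:3.1},
\[
\kappa_n \bigl( u_t^{\omega(1)}, \ldots, u_t^{\omega(n)} \bigr) = \sum_{\pi \in NC(n)} \moeb(\pi, 1_n) \prod_{V \in \pi} M_{\omega \mid V}(t, e^{-t/2}),
\]
and let $t \to \infty$. From the definition (\ref{eqn:31c}) of the polynomials $M_\omega$ one sees that $M_{\omega \mid V}(t, e^{-t/2}) \to 1$ when the restricted string $\omega \mid V$ has equally many occurrences of $1$ and of $*$ (call such a block \emph{balanced}), and $M_{\omega \mid V}(t, e^{-t/2}) \to 0$ otherwise, since in the non-balanced case there is a factor $(e^{-t/2})^m$ with $m \geq 1$. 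As $NC(n)$ is finite, the limit may be taken inside the sum and the products, giving
\[
\lim_{t \to \infty} \kappa_n \bigl( u_t^{\omega(1)}, \ldots, u_t^{\omega(n)} \bigr) = \sum_{\pi} \moeb(\pi, 1_n),
\]
where $\pi$ ranges over the partitions in $NC(n)$ all of whose blocks are balanced.

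The next step is to recognize this sum. It is precisely the value the moment--cumulant formula assigns to the $n$-th free cumulant of a Haar unitary $u$, evaluated at $(u^{\omega(1)}, \ldots, u^{\omega(n)})$, because a Haar unitary has $\varphi(u^{\omega(i_1)} \cdots u^{\omega(i_m)})$ equal to $1$ on balanced strings and $0$ otherwise; equivalently, one may note that $u_t$ converges in $*$-distribution to a Haar unitary and that cumulants depend polynomially, hence continuously, on moments. Either way, the limit is the quantity recalled in (\ref{eqn:1e}) --- the $q = \unitA$ specialization of Remark \ref{rem:2.6} --- namely $(-1)^{k-1} C_{k-1}$ when $n = 2k$ is even and $\omega$ is alternating, and $0$ in every other case (in particular, for $n$ odd there are no partitions with all blocks balanced, so the limit is $0$). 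This proves (\ref{eqn:62a}).

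For part $2^o$, I would write $Z_\omega(x,y) = Z_\omega^{(0)}(x) + \sum_{j=1}^{k} Z_\omega^{(2j)}(x)\, y^{2j}$ as in Proposition \ref{prop:3.1} and substitute $(x,y) = (t, e^{-t/2})$: for each $j \geq 1$ the corresponding term becomes $Z_\omega^{(2j)}(t)\, e^{-jt}$, which tends to $0$ as $t \to \infty$. Combined with part $1^o$, this shows that $\lim_{t \to \infty} Z_\omega^{(0)}(t)$ exists and equals the right-hand side of (\ref{eqn:62a}). Since a polynomial in $\bQ[x]$ that admits a finite limit as $x \to +\infty$ must be constant, $Z_\omega^{(0)}$ is that constant polynomial, which is exactly the assertion.

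I do not expect a serious obstacle here: part $1^o$ is essentially a repackaging of (\ref{eqn:6a}) and (\ref{eqn:1e}), while part $2^o$ reduces to the elementary fact that a polynomial which stays bounded as $x \to +\infty$ is constant. The only point that warrants a word of care is the interchange of $\lim_{t\to\infty}$ with the finite sum over $NC(n)$ and the finite products over blocks, which is legitimate precisely because those index sets are finite and each individual factor converges.
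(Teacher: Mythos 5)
Your proposal is correct and follows essentially the same route as the paper: part $1^o$ reduces to the convergence of the $*$-cumulants of $u_t$ to those of a Haar unitary together with Speicher's formula (\ref{eqn:1e}), and part $2^o$ is exactly the paper's argument that the positive powers of $e^{-t/2}$ kill all terms except $Z_{\omega}^{(0)}(t)$, whose finite limit forces it to be constant. The only difference is that you make the limit (\ref{eqn:6a}) explicit by passing through the $M_{\omega\mid V}$ polynomials and identifying the surviving sum over balanced partitions with the Haar-unitary moment--cumulant expansion, whereas the paper simply cites that convergence; this is a welcome filling-in of detail rather than a different proof.
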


\begin{proof} $1^o$ This is the limit from (\ref{eqn:6a}),
where we also invoke the explicit formula for the $*$-cumulants 
of a Haar unitary that was found in \cite{S1998}.  (See Section 
3.4 of \cite{S1998}, or Proposition 15.1 in 
the monograph \cite{NS2006}.)

\vspace{6pt}

$2^o$ In view of how $Z_{\omega}$ is defined, from $1^o$ it 
follows that $\lim_{t \to \infty} Z_{\omega} (t, e^{-t/2} )$ 
exists.  Since $\lim_{t \to \infty} 
Z_{\omega}^{(n-2j)} (t) \cdot (e^{-t/2})^{n-2j} = 0$ 
for $j < n/2$, we then infer that 
$\lim_{t \to \infty} Z_{\omega}^{(0)} (t)$ exists as well.  But 
this can only happen if $Z_{\omega}^{(0)}$ is a constant (and the 
constant in question must be the one appearing
on the right-hand side of (\ref{eqn:62a})).
\end{proof}

$\ $

For the present paper it is important that upon looking 
at strings of odd length, we get the following analogue of 
Proposition \ref{prop:6.2}.2.

$\ $

\begin{theorem}  \label{thm:6.3}
Let $\omega = ( \omega (1), \ldots , \omega (n))$ be in 
$\{ 1,* \}^n$.  Suppose that $n$ is odd, $n = 2k-1$, and
consider the polynomial $Z_{\omega}$, written in the form 
\[
Z_{\omega} (x,y) = Z_{\omega}^{(2k-1)} (x) \, y^{2k-1} 
+ Z_{\omega}^{(2k-3)} (x) \, y^{2k-3} + \cdots 
+ Z_{\omega}^{(3)} (x) \, y^3 + Z_{\omega}^{(1)} (x) \, y.
\]
Then $Z_{\omega}^{(1)} (x)$ is a constant polynomial, and more
precisely:
\begin{equation}   \label{eqn:63a}
Z_{\omega}^{(1)} (x) =
\left\{   \begin{array}{cl}
(-1)^{k-1} C_{k-1},  & \mbox{ if $\omega$ is alternating,}    \\
                     &                                        \\
0,                   & \mbox{ otherwise. }
\end{array}   \right.
\end{equation}
\end{theorem}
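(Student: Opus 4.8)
The idea is to mimic the structure of the proof of Theorem \ref{thm:3.7}: first dispose of a convenient base case, then run an induction on $n$ using the recursion for the $Z_\omega$'s from Proposition \ref{prop:3.4}, extracting the coefficient of $y^1$ on both sides. The statement is really two assertions bundled together: (a) $Z_\omega^{(1)}$ is a constant polynomial, and (b) the value of that constant is $(-1)^{k-1}C_{k-1}$ if $\omega$ is alternating and $0$ otherwise. I would handle (a) and (b) simultaneously inside the induction, since the recursion naturally tracks both.

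\textbf{First,} I would reduce to the case $\omega(1) = 1$, $\omega(n) = *$, exactly as in the proof of Theorem \ref{thm:3.7}: if $\omega$ has all entries equal (impossible here anyway, since $n$ is odd so $\switch(\omega)\ge 2$ forces both symbols to occur), or more precisely for any $\omega$ in which both symbols appear, there is a cyclic rotation $\omega'$ with $\omega'(1)=1$, $\omega'(n)=*$, and $Z_{\omega'} = Z_\omega$ by Remark \ref{rem:3.2}.1, while ``being alternating'' is a cyclic-rotation-invariant property by Definition \ref{def:6.1}.2. (One should note that $n$ odd guarantees both symbols occur in any string of switch-number $\ge 2$; the only remaining strings are the constant ones $(1,\dots,1)$, $(*,\dots,*)$, for which $Z_\omega(x,y) = \frac{(-n)^{n-1}}{n!}x^{n-1}y^n$ by Equation (\ref{eqn:1gg}) has no $y^1$ term, so $Z_\omega^{(1)}=0$, consistent with these not being alternating.) So it suffices to treat $\omega$ with $\omega(1)=1$, $\omega(n)=*$.

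\textbf{Second,} for such $\omega$ apply Proposition \ref{prop:3.4} and extract the coefficient of $y^1$. Writing $\omega' = (\omega(1),\dots,\omega(m))$ of length $m$ and $\omega'' = (\omega(m+1),\dots,\omega(n))$ of length $n-m$, the only way a product $Z_{\omega'}\cdot Z_{\omega''}$ contributes to $y^1$ is when one factor contributes $y^1$ and the other contributes $y^0$. But by Proposition \ref{prop:3.1}, the powers of $y$ appearing in $Z_{\omega'}$ all have the parity of $m$, and those in $Z_{\omega''}$ the parity of $n-m$; since $n$ is odd, exactly one of $m$, $n-m$ is even and the other odd. Thus, for each $m$, the $y^1$-coefficient of the product $Z_{\omega'}Z_{\omega''}$ is: $Z_{\omega'}^{(1)}\cdot Z_{\omega''}^{(0)}$ if $m$ is odd (so $n-m$ even, $Z_{\omega''}^{(0)}$ is the legitimate constant term), plus $Z_{\omega'}^{(0)}\cdot Z_{\omega''}^{(1)}$ if $m$ is even. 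By the induction hypothesis for strings of length $<n$ (odd-length factor gives $Z^{(1)}$ a constant, even-length factor gives $Z^{(0)}$ a constant by Proposition \ref{prop:6.2}.2), every term in the resulting sum is a constant, proving (a): $Z_\omega^{(1)}$ is constant. For (b), I would then feed in the known constants: $Z_{\omega'}^{(1)}$ (odd factor) is, by induction, $(-1)^{j-1}C_{j-1}$ or $0$ according as $\omega'$ is alternating (here length of $\omega'$ is $2j-1$); and $Z_{\omega''}^{(0)}$ (even factor, length $2\ell$) is $(-1)^{\ell-1}C_{\ell-1}$ or $0$ by Proposition \ref{prop:6.2}.2 and Equation (\ref{eqn:62a}), again according as $\omega''$ is alternating. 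So the surviving terms in the recursion are exactly those splittings $\omega = \omega' \omega''$ into an odd alternating piece and an even alternating piece (or vice versa), and one must check the resulting signed sum of products of Catalan numbers equals $(-1)^{k-1}C_{k-1}$ when $\omega$ itself is alternating and $0$ otherwise.

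\textbf{The main obstacle} is this last bookkeeping step: verifying that the combinatorial sum on the right-hand side collapses correctly. When $\omega$ is alternating (say $\omega = (1,*,1,*,\dots)$ rotated so $\omega(1)=1,\omega(n)=*$, i.e. the honest alternating word of odd length $2k-1$ beginning with $1$ and ending with $*$, which forces a repeated symbol somewhere — wait, length $2k-1$ alternating with $\omega(1)=1$ ends in $1$, so one must be careful about which rotation is used), one needs the splittings into two alternating halves to be controlled, and the signs $(-1)^{j-1}(-1)^{\ell-1}$ together with the overall minus sign from Proposition \ref{prop:3.4} should assemble via the Catalan recursion $C_{k-1} = \sum_{j+\ell=k} C_{j-1}C_{\ell-1}$ (with appropriate index shift) into $(-1)^{k-1}C_{k-1}$. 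When $\omega$ is not alternating, one must argue that for \emph{every} splitting point $m$, at least one of $\omega'$, $\omega''$ fails to be alternating, hence every term vanishes — this should follow because an alternating word has only alternating ``sub-intervals'', so if both pieces were alternating the concatenation would have to be alternating too, at least given the constraint $\omega(1)=1$, $\omega(n)=*$; I would verify this cleanly using the switch-number bookkeeping of Lemma \ref{lemma:3.6}, noting that an alternating string of length $n$ has the maximal possible switch-number (namely $n-1$ if $n$ odd, $n$ if $n$ even), and a concatenation of two alternating pieces can reach the needed switch-number only in the alternating case. I would state the base cases $n=1$ ($Z_{(1)} = y$, $Z_{(*)}=y$, so $Z^{(1)}=1 = (-1)^0 C_0$, and these count as alternating of length $1$) explicitly to anchor the induction.
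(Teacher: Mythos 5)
Your proposal is correct, and for the alternating case it is essentially the paper's own argument: induction on the odd length, the recursion of Proposition \ref{prop:3.4}, extraction of the coefficient of $y$, the parity observation that for each cut point $m$ exactly one of the two products $Z_{\omega'}^{(1)}Z_{\omega''}^{(0)}$, $Z_{\omega'}^{(0)}Z_{\omega''}^{(1)}$ can occur, Proposition \ref{prop:6.2}.2 for the even-length constants, and the Catalan convolution $C_{k-1}=\sum_{j=0}^{k-2}C_j C_{k-2-j}$ to assemble the answer. Where the paper takes a genuinely shorter route is the non-alternating case: rather than pushing it through the recursion and proving that no splitting of a non-alternating $\omega$ has both pieces alternating, the paper notes that for odd $n$ one has $\switch(\omega)\le n-1$ with equality precisely for alternating strings, so a non-alternating $\omega$ has $\switch(\omega)\le n-3$, and Theorem \ref{thm:3.7} applied with $j=(n-1)/2$ gives $Z_\omega^{(1)}=0$ in one line. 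This spares you the concatenation lemma you sketch at the end; that lemma is nevertheless true and follows from Lemma \ref{lemma:3.6} exactly as you suggest (two alternating pieces of lengths $2j-1$ and $2\ell$ have switch-numbers $2j-2$ and $2\ell$, summing to $n-1$, which would force $\switch(\omega)=n-1$). Two loose ends you should tie up: resolve the ``wait'' about the representative --- the paper works with the rotation $(1,1,*,1,*,\ldots,1,*)$ of the odd alternating string, which does begin with $1$ and end with $*$; and note explicitly that for this representative all even-length prefixes $(1,1),(1,1,*,1),\ldots$ are non-alternating, so the entire even-$m$ half of the sum is killed by Proposition \ref{prop:6.2}.2 and only the odd-$m$ half survives to produce the Catalan convolution.
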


\begin{proof}  
It is easily seen that every $\omega \in \{ 1,* \}^n$ has 
$\switch ( \omega ) \leq n-1$, with equality holding if and only 
if $\omega$ is alternating.  Thus if $\omega$ is not 
alternating, then Theorem \ref{thm:3.7} can be applied with
$j = (n-1)/2$, and gives that
$Z_{\omega}^{(1)} (x)$ is constantly equal to $0$.

We are left to prove the following:
\begin{equation}   \label{eqn:63b}
\left\{  \begin{array}{l}
\mbox{if $n = 2k -1$ and if 
          $\omega \in \{ 1, * \}^n$ is alternating, }  \\
\mbox{then $Z_{\omega}^{(1)} (x)$ 
is constantly equal to $(-1)^{k-1} C_{k-1}$. } 
\end{array}  \right.
\end{equation}
We will prove this statement by induction on $k$.  The case 
$k=1$ is clear, since $Z_{(1)} (x,y) = Z_{(*)} (x,y) = y$
(corresponding to the fact that $u_t$ has expectation
$e^{-t/2}$, $\forall \, t \in [0, \infty )$).  The remaining 
part of the proof is devoted to the induction step: we fix 
$k \geq 2$, we assume that (\ref{eqn:63b}) holds for 
alternating strings of length $1,3, \ldots , 2k-3$, and we 
prove that it also holds for alternating strings of length 
$2k-1$.

Since any two alternating strings of length $2k-1$ can 
be obtained from each other by operations which do not affect 
$Z_{\omega}$'s, it will suffice to verify that, for the $k$ 
that was fixed, we have
\begin{equation}  \label{eqn:63c}
Z_{ ( \, \underbrace{1,1,*, \ldots , 1,*}_{2k-1} \, )}^{(1)} (x) 
= (-1)^{k-1} C_{k-1}.
\end{equation}

In order to verify (\ref{eqn:63c}), we invoke the recursion 
from Proposition \ref{prop:3.4} (used for the string 
$\omega = (1,1,*, \ldots , 1,*) \in \{ 1,* \}^{2k-1}$), and 
we extract the coefficient of $y$ on both sides of that 
recursion.  On the right-hand side of the resulting equation
we get a sum which (same as in Equation (\ref{eqn:34a}) of 
Proposition \ref{prop:3.4}) is indexed by $m$, with 
$1 \leq m \leq n-1 = 2k-2$.  By grouping the terms of the
sum according to the parity of $m$, we obtain that
\begin{equation}  \label{eqn:63d}
Z_{( \, \underbrace{1,1,*, \ldots , 1,*}_{2k-1} \, )}^{(1)} 
= - \bigl( \, \Sigma_{\mathrm{odd}} 
            + \Sigma_{\mathrm{even}} \, \bigr),
\end{equation}
where
\[
\Sigma_{\mathrm{odd}} = 
Z_{(1)}^{(1)} 
\, Z_{( \, \underbrace{1,*, \ldots ,1,*}_{2k-2} \, )}^{(0)}
+ Z_{(1,1,*)}^{(1)} 
\, Z_{( \, \underbrace{1,*, \ldots ,1,*}_{2k-4} \, )}^{(0)}
+ \cdots +
\, Z_{( \, \underbrace{1,1,*, \ldots ,1,*}_{2k-3} \, )}^{(0)}
\, Z_{(1,*)}^{(0)}
\]
and
\[
\Sigma_{\mathrm{even}} = 
Z_{(1,1)}^{(0)} 
\, Z_{( \, \underbrace{*,1,*, \ldots ,1,*}_{2k-3} \, )}^{(1)}
+ Z_{(1,1,*,1)}^{(0)} 
\, Z_{( \, \underbrace{*,1,*, \ldots ,1,*}_{2k-5} \, )}^{(1)}
+ \cdots +
\, Z_{( \, \underbrace{1,1,*,1 \ldots ,*,1}_{2k-2} \, )}^{(0)}
\, Z_{(*)}^{(1)}.
\]
The sum $\Sigma_{\mathrm{even}}$ is equal to $0$, because of
\[
Z_{(1,1)}^{(0)} = Z_{(1,1,*,1)}^{(0)} = \cdots
= Z_{( \, \underbrace{1,1,*,1 \ldots ,*,1}_{2k-2} \, )}^{(0)} = 0
\]
(cf.~Proposition \ref{prop:6.2}.2, case of non-alternating strings).
On the other hand, the induction hypothesis and the case of 
alternating strings in Proposition \ref{prop:6.2}.2 give us that
\[
\Sigma_{\mathrm{odd}} = 
(-1)^{0} C_{0} \cdot (-1)^{k-2} C_{k-2} 
+ (-1)^{1} C_{1} \cdot (-1)^{k-3} C_{k-3} + \cdots
+ (-1)^{k-2} C_{k-2} \cdot (-1)^0 C_0.
\]
Thus Equation (\ref{eqn:63d}) comes, after all, to
\[
Z_{( \, \underbrace{1,1,*, \ldots , 1,*}_{2k-1} )}^{(1)} (x) 
= (-1)^{k-1} \sum_{j=0}^{k-2} C_j \cdot C_{k-2-j}.
\]
A basic recursion for Catalan numbers says that the 
sum on the right-hand side of the latter equality is just 
$C_{k-1}$, and the required formula (\ref{eqn:63c}) follows.
\end{proof}

$\ $

We can now follow the same kind of connection as in Proposition 
\ref{prop:6.2} (but going in reverse) in order to obtain 
the ``derivative at $t= \infty$'' for $*$-cumulants of the $u_t$'s.

$\ $

\begin{corollary}  \label{cor:6.4}
Let $n$ be a positive integer and let 
$\omega = ( \omega (1), \ldots , \omega (n))$ be a string in 
$\{ 1,* \}^n$.  We consider the limit
\begin{equation}   \label{eqn:68a}
\lim_{t \to \infty}  \frac{ 
\kappa_n \bigl( u_t^{ \omega (1) }, \ldots ,
                u_t^{ \omega (n) }  \bigr)     -
\kappa_n \bigl( u^{ \omega (1) }, \ldots ,
                u^{ \omega (n) }  \bigr)    }{e^{-t/2}}
\end{equation}
where $u_t$ is the free unitary Brownian motion at time $t$, and
$u$ is a Haar unitary.  This limit exists, and is equal to 
\begin{equation}  \label{eqn:68b}
\left\{   \begin{array}{cl}
(-1)^{k-1} C_{k-1},  & \mbox{ if $n$ is odd, $n = 2k-1$, and }     \\
                     & \mbox{ $\omega$ is an alternating string,}  \\
                     &                                             \\
0,                   & \mbox{ otherwise. }
\end{array}   \right.
\end{equation} 
\end{corollary}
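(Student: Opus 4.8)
The plan is to extract everything from the polynomial description of the cumulant together with the bottom coefficient of $Z_\omega$, which has already been computed: it is a constant by Proposition~\ref{prop:6.2}.2 (for $n$ even) and by Theorem~\ref{thm:6.3} (for $n$ odd). First I would use Proposition~\ref{prop:3.1} to rewrite the numerator in (\ref{eqn:68a}) as $Z_\omega(t, e^{-t/2}) - \kappa_n\bigl(u^{\omega(1)}, \ldots, u^{\omega(n)}\bigr)$, and observe, via Proposition~\ref{prop:6.2}.1, that the subtracted cumulant of the Haar unitary is exactly $\lim_{t\to\infty} Z_\omega(t, e^{-t/2})$. Substituting $y = e^{-t/2}$ into the expansion (\ref{eqn:31b}), each summand $Z_\omega^{(n-2j)}(t)\,(e^{-t/2})^{n-2j}$ is a polynomial in $t$ times a decaying exponential, so it tends to $0$ unless $n - 2j = 0$; hence $\lim_{t\to\infty} Z_\omega(t, e^{-t/2}) = Z_\omega^{(0)}$, with the convention that this is $0$ when $n$ is odd (no index $j$ then satisfies $n - 2j = 0$).

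Subtracting this limit and dividing by $e^{-t/2}$, the quotient in (\ref{eqn:68a}) becomes
\[
\sum_{\substack{0 \le j \le n/2 \\ n - 2j \ge 1}} Z_\omega^{(n-2j)}(t)\,(e^{-t/2})^{\,n-2j-1}.
\]
The exponent $n - 2j - 1$ is nonnegative because $n - 2j \ge 1$, and it equals $0$ precisely when $n$ is odd and $j = (n-1)/2$; in every other term it is a strictly positive integer, so that term is a polynomial in $t$ times a power of $e^{-t/2}$ with positive exponent and therefore vanishes as $t \to \infty$. Consequently the limit (\ref{eqn:68a}) exists, equals $0$ when $n$ is even, and equals the constant $Z_\omega^{(1)}$ when $n$ is odd.

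It then only remains to quote Theorem~\ref{thm:6.3}: for $n = 2k - 1$ odd, $Z_\omega^{(1)}$ equals $(-1)^{k-1} C_{k-1}$ if $\omega$ is alternating and $0$ otherwise, which is precisely the value claimed in (\ref{eqn:68b}). I do not expect a genuine obstacle here: all the substantive content is already packed into Theorem~\ref{thm:6.3}, and the only point requiring a moment's care is that, since every power of $y$ occurring in $Z_\omega$ has the same parity as $n$, subtracting the constant term and dividing by $e^{-t/2}$ does leave all the remaining powers of $e^{-t/2}$ with strictly positive exponent, so that only the $Z_\omega^{(1)}$ term survives in the odd case.
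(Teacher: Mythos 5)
Your proposal is correct and is essentially the paper's own argument: identify the subtracted Haar cumulant with the constant term $Z_\omega^{(0)}$ of the quasi-polynomial expansion (zero when $n$ is odd), divide the remaining terms by $e^{-t/2}$, note that only the $y^1$ term can survive the limit, and then quote Theorem \ref{thm:6.3} for its value. No gaps.
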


\begin{proof} If $n$ is even, $n = 2k$, then the difference
on the numerator of the fraction in (\ref{eqn:68a}) is
\[
Z_{\omega}^{(2k)} (t) \cdot (e^{-t/2})^{2k} + 
Z_{\omega}^{(2k-2)} (t) \cdot (e^{-t/2})^{2k-2} + \cdots 
+ Z_{\omega}^{(2)} (t) \cdot (e^{-t/2})^{2},
\]
and when divided by $e^{-t/2}$ this is sure to go to $0$ as 
$t \to \infty$.

If $n$ is odd, $n = 2k-1$, then the difference
on the numerator of the fraction in (\ref{eqn:68a}) is
\[
Z_{\omega}^{(2k-1)} (t) \cdot (e^{-t/2})^{2k-1} + 
Z_{\omega}^{(2k-3)} (t) \cdot (e^{-t/2})^{2k-3} + \cdots 
+ Z_{\omega}^{(3)} (t) \cdot (e^{-t/2})^{3}
+ Z_{\omega}^{(1)} (t) \cdot (e^{-t/2}).
\]
When divided by $e^{-t/2}$ this converges to the constant 
$Z_{\omega}^{(1)} (t)$ described in Theorem \ref{thm:6.3}, and
the result follows.
\end{proof}

$\ $

\begin{remark}   \label{rem:6.5}
The limit from Corollary \ref{cor:6.4} points towards an 
``infinitesimal structure'' which accompanies the $*$-distribution 
of a Haar unitary, in the sense of the paper of Belinschi and 
Shlyakhtenko \cite{BS2010}.  In order to relate to the framework 
of \cite{BS2010}, one has to do a change of variable: consider the 
noncommutative probability spaces $( \cB_s , \psi_s )$, defined 
for $s \in [0,1]$, where for $s \neq 0$ we put 
$\cB_s = \cA_{-2 \log s}$, $\psi_s = \varphi_{-2 \log s}$, while 
for $s=0$ we take $( \cB_0 , \psi_0 )$ to be the space where the Haar 
unitary lives.  With this change of variable, the limit from 
(\ref{eqn:68a}) becomes a derivative at $0$, as prescribed in 
\cite{BS2010}.
\end{remark}

$\ $

\begin{remark}  \label{rem:6.6}
As reviewed in Remark \ref{rem:2.6}, the Haar unitary is a basic 
example of $R$-diagonal element, with determining sequence 
consisting of signed Catalan numbers.  For a Haar unitary $u$, 
Corollary \ref{cor:6.4} brings into the picture an additional 
``infinitesimal determining sequence'', which happens to also 
consist of signed Catalan numbers, and which determines the 
derivatives at $\infty$ of all joint cumulants of $u_t$ and 
$u_t^{*}$ (with $u_t$ seen as an approximation of $u$).  It is 
natural to extend this concept of infinitesimal determining 
sequence to the case of an $R$-diagonal element $a = uq$ as 
appearing in Remark \ref{rem:2.6}.  Indeed, we can approximate 
such an $a$ with elements of the form $a_t := u_t q$ (where $q$ 
is now assumed to also be free from $\{ u_t, u_t^{*} \}$), and we 
can consider the same kind of limits as in Equation (\ref{eqn:68a}) 
of Corollary \ref{cor:6.4}, but now in connection to $a$ and $a_t$.  
This leads to the next proposition, which provides a nice
infinitesimal analogue for the facts reviewed in Remark 
\ref{rem:2.6}.
\end{remark}

$\ $

\begin{proposition}  \label{prop:6.7}
Let $a$ and $\{ a_t \mid t \in [0, \infty ) \}$ be as in the 
preceding remark.  There exists a sequence 
$(\beta_k )_{k=1}^{\infty}$ such that for 
$\omega = (\omega (1), \ldots , \omega (n)) \in \{ 1,* \}^n$
one has:
\begin{equation}  \label{eqn:67a}
\lim_{t \to \infty}  \frac{ 
\kappa_n \bigl( a_t^{ \omega (1) }, \ldots ,
                a_t^{ \omega (n) }  \bigr)     -
\kappa_n \bigl( a^{ \omega (1) }, \ldots ,
                a^{ \omega (n) }  \bigr)    }{e^{-t/2}}
= \left\{  \begin{array}{cl}
\beta_{(n+1)/2},
    &  \mbox{if $n$ is odd and $\omega$ }            \\
    &  \mbox{ is alternating,}                       \\
0,  &  \mbox{otherwise.}
\end{array}  \right.
\end{equation} 
The $\beta_k$'s can be written in terms of the free cumulants of 
$q$ and $q^2$ via a formula similar to 
Equation (\ref{eqn:26b}), as follows:
\begin{equation}    \label{eqn:67b}
\beta_k = \sum_{\pi \in NC(k)}
\Bigl( \moeb ( \pi, 1_k) \cdot
\prod_{V \in \pi}  \kappa_{|V|} 
\bigl( \, (q^2, \ldots , q^2,q) \mid V \, \bigr) \, \Bigr),
\ \ k \in \bN .
\end{equation}
\end{proposition}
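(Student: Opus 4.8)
The plan is to imitate the derivation of the determining sequence of an $R$-diagonal element (reviewed in Remark~\ref{rem:2.6}), feeding into it the ``infinitesimal'' input provided by Corollary~\ref{cor:6.4}. Write $a_t = u_t q$ and, using $q = q^{*}$, note that $a_t^{*} = q u_t^{*}$, so that for $\omega \in \{ 1, * \}^n$ the product $a_t^{\omega(1)} \cdots a_t^{\omega(n)}$ is a word of $2n$ factors, the $i$-th entry contributing the pair $(u_t, q)$ when $\omega(i) = 1$ and the pair $(q, u_t^{*})$ when $\omega(i) = *$. Apply the Krawczyk--Speicher formula (\ref{eqn:24b}), taking $\sigma \in NC(2n)$ to be the interval partition whose $n$ blocks are these successive pairs; then discard, by Remark~\ref{rem:2.4}.1 together with the hypothesis that $q$ is free from $\{ u_t, u_t^{*} \}$, every term indexed by a $\pi$ having a block that contains both a $u$-factor and a $q$-factor. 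What survives is
\[
\kappa_n \bigl( a_t^{\omega(1)}, \ldots , a_t^{\omega(n)} \bigr)
= \sum_{\pi} \Bigl( \prod_{V \in \pi_u}
  \kappa_{|V|} \bigl( u_t^{\omega \mid V} \bigr) \Bigr)
  \Bigl( \prod_{W \in \pi_q} \kappa_{|W|} (q, \ldots , q) \Bigr),
\]
the sum running over those $\pi \in NC(2n)$ which separate the $u$-factors from the $q$-factors and satisfy $\pi \vee \sigma = 1_{2n}$, with $\pi_u, \pi_q$ the induced non-crossing partitions of the $u$-factors and of the $q$-factors; here one uses the elementary observation that, read in their natural order, the $u$-factors spell out exactly the string $\omega$, so that $\omega \mid V$ makes sense for a $u$-block $V$. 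Letting $t \to \infty$, each $\kappa_{|V|}(u_t^{\omega \mid V})$ tends to the corresponding free cumulant of a Haar unitary, and this identity collapses to the $R$-diagonal formulas (\ref{eqn:26a})--(\ref{eqn:26b}) for $a = uq$; the appearance of the $q^2$'s there is the standard manipulation of Lecture~15 of \cite{NS2006}, which re-expresses the $q$-cumulants, once regrouped according to the Kreweras complement of the $u$-structure, as $q^2$-cumulants over a smaller non-crossing lattice.

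The next step is to extract the coefficient of $e^{-t/2}$. By Proposition~\ref{prop:6.2}.1 the limit $c_V := \lim_{t \to \infty} \kappa_{|V|}(u_t^{\omega \mid V})$ is a signed Catalan number when $\omega \mid V$ is an alternating string of even length and is $0$ otherwise, while by Corollary~\ref{cor:6.4} the quantity $d_V := \lim_{t \to \infty} \bigl( \kappa_{|V|}(u_t^{\omega \mid V}) - c_V \bigr) / e^{-t/2}$ is a signed Catalan number when $\omega \mid V$ is alternating of odd length and is $0$ otherwise. Substituting $\kappa_{|V|}(u_t^{\omega \mid V}) = c_V + e^{-t/2} d_V + o(e^{-t/2})$ into the displayed product, the order-$1$ term is $\kappa_n(a^{\omega(1)}, \ldots , a^{\omega(n)})$ and the order-$e^{-t/2}$ term shows that the limit in (\ref{eqn:67a}) equals
\[
\sum_{\pi} \ \sum_{V_0 \in \pi_u} d_{V_0}
\Bigl( \prod_{V \in \pi_u, \, V \neq V_0} c_V \Bigr)
\Bigl( \prod_{W \in \pi_q} \kappa_{|W|}(q, \ldots , q) \Bigr).
\]

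What remains is combinatorial. Such a summand is nonzero only when exactly one $u$-block $V_0$ is alternating of odd length and every other $u$-block is alternating of even length; summing block sizes, the number $n$ of $u$-factors is then odd, which is the first alternative of (\ref{eqn:67a}). When $n = 2k-1$, an argument via switch-numbers --- a non-crossing-partition refinement of Lemma~\ref{lemma:3.6}, combined with the minimal-$y$-degree information of Theorem~\ref{thm:3.7} --- forces $\switch(\omega) = n-1$, i.e.\ $\omega$ itself alternating; for non-alternating $\omega$ of odd length every summand vanishes. Finally, for $\omega$ alternating, one identifies the surviving data --- a non-crossing partition of the $u$-factors with one distinguished odd block $V_0$, together with the induced non-crossing partition of the $q$-factors --- with the partitions $\pi' \in NC(k)$ of (\ref{eqn:67b}): the even $u$-blocks, carrying their enclosed $q$-cumulants, account for the $q^2$-entries, the distinguished block $V_0$ accounts for the single $q$-entry, and the product $d_{V_0} \prod_{V \neq V_0} c_V$ of signed Catalan numbers reassembles, through the multiplicativity of $\moeb$ over the blocks of a Kreweras complement and the identity $\moeb(0_j, 1_j) = (-1)^{j-1} C_{j-1}$ of Notation~\ref{def:2.1}.4, into $\moeb(\pi', 1_k)$. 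This gives exactly (\ref{eqn:67b}); in particular the answer depends only on $k = (n+1)/2$, its independence of the chosen alternating $\omega$ following from the same invariances (cyclic rotation, reversal, and swapping $1 \leftrightarrow *$) that were recorded for the $Z_\omega$'s.

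The main obstacle is this last identification: transporting the ``$u$-partition with one distinguished odd block, plus $q$-partition'' data to $NC(k)$ while tracking the signed-Catalan weights into the M\"obius factor. It is an adaptation of the computation of the determining sequence in \cite{NS2006} (pp.~241--244); the genuinely new feature, beyond plugging in Corollary~\ref{cor:6.4}, is the single ``defective'' odd $u$-block, which --- in contrast to the trivial case $q = \unitA$, where it is forced to be a singleton --- may have arbitrary odd size, and one must check that regardless of its size the surviving configurations still produce the $NC(k)$-sum of (\ref{eqn:67b}) with just one $q$-entry. I would also record that (\ref{eqn:67a})--(\ref{eqn:67b}) is the ``infinitesimal $R$-diagonal formula'' and can alternatively be obtained within the infinitesimal-freeness framework of Belinschi--Shlyakhtenko \cite{BS2010}: after the change of variables of Remark~\ref{rem:6.5}, the pair $(u, u')$ assembled from a Haar unitary and its derivative at $\infty$ is infinitesimally free from $(q, 0)$, so that the infinitesimal $*$-distribution of $a_t = u_t q$ is the infinitesimal multiplicative convolution of the two --- but the direct computation above keeps the section self-contained.
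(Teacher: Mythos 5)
Your proposal is correct and follows essentially the same route as the paper's own proof: expand via the Krawczyk--Speicher products-as-entries formula with the interval partition into pairs, use freeness to separate the $u$-blocks from the $q$-blocks, extract the coefficient of $e^{-t/2}$ by means of Proposition \ref{prop:6.2} and Theorem \ref{thm:6.3}/Corollary \ref{cor:6.4} (which forces exactly one odd alternating $u$-block and hence $n$ odd and $\omega$ alternating), and reassemble the signed Catalan weights into $\moeb ( \cdot , 1_k )$ over $NC(k)$ with a single $q$-entry coming from the distinguished odd block. The two purely combinatorial identifications you flag as the main obstacle are exactly the ones the paper isolates as its Lemmas \ref{lemma:6.11} and \ref{lemma:6.13} and likewise leaves to the reader, so your level of detail there matches the original.
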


\noindent
[A concrete example: for $\omega = (1,*,1)$ the above
proposition says that
\[
\lim_{t \to \infty} \frac{\kappa_3 (u_t q, q u_t^{*}, u_t q)
- \kappa_3 (uq, qu^{*}, uq) }{e^{-t/2}} = \beta_2,
\mbox{ with $\beta_2 = - \kappa_1 (q^2) \kappa_1 (q) 
+ \kappa_2 (q^2, q)$.] }
\]

$\ $

The remaining part of this section is devoted to discussing the 
proof of Proposition \ref{prop:6.7}.  The arguments revolve
around a certain set of non-crossing partitions 
$NC_{\omega} (2n) \subseteq NC(2n)$ that is associated to an
$\omega \in \{ 1,* \}^n$.  The sets $NC_{\omega} (2n)$ are 
introduced next, and their relevance for the limits on the 
left-hand side of Equation (\ref{eqn:67a}) is explained in 
Lemma \ref{lemma:6.10} below.  The conclusion of 
Proposition \ref{prop:6.7} will then be derived via a calculation 
which relies on the structure of these $NC_{\omega} (2n)$'s. 
In order to not make the discussion excessively long, we will 
merely state (in Lemmas \ref{lemma:6.11} and \ref{lemma:6.13}) the 
relevant facts we need about $NC_{\omega} (2n)$, and we will leave 
the proofs of these purely combinatorial facts as an exercise to the 
interested reader.

$\ $

\begin{definition}   \label{def:6.8}
Consider a string
$\omega = ( \omega (1), \ldots , \omega (n)) \in \{ 1, * \}^n$.

$1^o$ We denote
$U_{\omega} := \{ 2i-1 \mid 1 \leq i \leq n, \ \omega (i) = 1 \}
\cup \{ 2i   \mid 1 \leq i \leq n, \ \omega (i) = * \}$, and 

\noindent
$Q_{\omega} :=  \{ 1, 2, \ldots , 2n \} \setminus U_{\omega}
            = \{ 2i   \mid 1 \leq i \leq n, \ \omega (i) = 1 \}
       \cup \{ 2i-1   \mid 1 \leq i \leq n, \ \omega (i) = * \}$.

\noindent
(Thus $U_{\omega}, Q_{\omega} \subseteq \{ 1, \ldots , 2n \}$, 
and they have $n$ elements each.)

\vspace{6pt}

$2^o$ We will use the notation $NC_{\omega} (2n)$ for the set of
partitions $\tau \in NC(2n)$ which fulfill all of the following 
conditions (i)--(v).

\vspace{6pt}

\noindent
(i) $\tau \vee \{ \, \{ 1,2 \}, \{ 3,4 \} , 
              \ldots , \{ 2n-1,2n \} \, \} = 1_{2n}$.

\vspace{6pt}

\noindent
(ii) For every $V \in \tau$ we have that either 
$V \subseteq Q_{\omega}$ or $V \subseteq U_{\omega}$.

\vspace{6pt}

\noindent
(iii) There exists precisely one block $V_o$ of $\tau$
such that $V_o \subseteq U_{\omega}$ and $|V_o|$ is odd.

\vspace{6pt}

\noindent
(iv) If $V_o = \{ i_1, \ldots, i_p \}$ 
(with $i_1 < \cdots < i_p$) is as in (iii) then, modulo a 
cyclic permutation, the numbers  
$i_1, \ldots , i_p$ have alternating parities.

\vspace{6pt}

\noindent
(v) If $V = \{ j_1, \ldots, j_r \}$ 
(with $j_1 < \cdots < j_r$) is a block of $\tau$ such that 
$V \subseteq U_{\omega}$ and $r$ is even, then 
$j_1, \ldots , j_r$ have alternating parities.

\vspace{6pt}

\noindent
[Note: the meaning of (iv) is that if in 
$(i_1, \ldots , i_p)$ we replace every $i_h$ which is odd by 
a ``$1$'' and every $i_h$ which is even by a ``$*$'', then 
we get an alternating string in $\{ 1,* \}^p$, in the sense 
of Definition \ref{def:6.1}.2.  The same happens for (v),
but there we don't need to mention the possibility of a 
cyclic permutation.]
\end{definition}

$\ $

\begin{example}   \label{example:6.9}
To illustrate the above terminology, let us
pursue the case when $n=3$ and $\omega = (1,*,1)$.  Then 
$U_{\omega} = \{ 1,4,5 \}$ and $Q_{\omega} = \{ 2,3,6 \}$.
Direct inspection shows that for a partition 
$\tau \in NC_{\omega} (6)$, the restriction $\tau \mid U_{\omega}$ 
has to be one of $\{ \, \{ 1,4,5 \} \, \}$ or
$\{ \, \{ 1 \}, \, \{ 4,5 \} \, \}$.  (If we try to make 
$\tau \mid U_{\omega} = \{ \, \{ 1,4 \}, \, \{ 5 \} \, \}$ then 
condition (i) of Definition \ref{def:6.8}.2 cannot be satisfied.
Likewise, trying to make $\tau \mid U_{\omega}$ be one of
$\{ \, \{ 1,5 \}, \, \{ 4 \} \, \}$ or
$\{ \, \{ 1 \}, \, \{ 4 \}, \, \{ 5 \} \, \}$ violates
condition (v), respectively (iii).)  We find in this way that 
$NC_{\omega} (6)$ consists of 5 partitions, depicted as follows.
\\
\vskip20pt
\begin{center}
  \setlength{\unitlength}{0.3cm}

\begin{picture}(6,2)
  \thicklines
  \put(1,0){\line(0,1){2}}
  \put(1,0){\line(1,0){4}}
  \put(4,0){\line(0,1){2}}
  \put(5,0){\line(0,1){2}}
  \linethickness{0.6mm}
  \put(2,1){\line(0,1){1}}
  \put(2,1){\line(1,0){1}}
  \put(3,1){\line(0,1){1}}
  \put(6,1){\line(0,1){1}}
  \put(0.8,2.5){1}
  \put(1.8,2.5){2}
  \put(2.8,2.5){3}
  \put(3.8,2.5){4}
  \put(4.8,2.5){5}
  \put(5.8,2.5){6}
\end{picture} $\ $ , \hspace{0.3cm}
\begin{picture}(6,2)
  \thicklines
  \put(1,0){\line(0,1){2}}
  \put(1,0){\line(1,0){4}}
  \put(4,0){\line(0,1){2}}
  \put(5,0){\line(0,1){2}}
  \linethickness{0.6mm}
  \put(2,1){\line(0,1){1}}
  \put(3,1){\line(0,1){1}}
  \put(6,1){\line(0,1){1}}
  \put(0.8,2.5){1}
  \put(1.8,2.5){2}
  \put(2.8,2.5){3}
  \put(3.8,2.5){4}
  \put(4.8,2.5){5}
  \put(5.8,2.5){6}
\end{picture} $\ $ , \hspace{0.3cm}
\begin{picture}(6,2)
  \thicklines
  \put(1,1){\line(0,1){1}}
  \put(4,1){\line(0,1){1}}
  \put(4,1){\line(1,0){1}}
  \put(5,1){\line(0,1){1}}
  \linethickness{0.6mm}
  \put(2,0){\line(0,1){2}}
  \put(2,0){\line(1,0){4}}
  \put(3,0){\line(0,1){2}}
  \put(6,0){\line(0,1){2}}
  \put(0.8,2.5){1}
  \put(1.8,2.5){2}
  \put(2.8,2.5){3}
  \put(3.8,2.5){4}
  \put(4.8,2.5){5}
  \put(5.8,2.5){6}
\end{picture} $\ $ , \hspace{0.3cm}
\begin{picture}(6,2)
  \thicklines
  \put(1,1){\line(0,1){1}}
  \put(4,1){\line(0,1){1}}
  \put(4,1){\line(1,0){1}}
  \put(5,1){\line(0,1){1}}
  \linethickness{0.6mm}
  \put(2,1){\line(0,1){1}}
  \put(2,1){\line(1,0){1}}
  \put(3,1){\line(0,1){1}}
  \put(6,1){\line(0,1){1}}
  \put(0.8,2.5){1}
  \put(1.8,2.5){2}
  \put(2.8,2.5){3}
  \put(3.8,2.5){4}
  \put(4.8,2.5){5}
  \put(5.8,2.5){6}
\end{picture} $\ $ , \hspace{0.3cm}
\begin{picture}(6,2)
  \thicklines
  \put(1,1){\line(0,1){1}}
  \put(4,1){\line(0,1){1}}
  \put(4,1){\line(1,0){1}}
  \put(5,1){\line(0,1){1}}
  \linethickness{0.6mm}
  \put(2,0){\line(0,1){2}}
  \put(2,0){\line(1,0){4}}
  \put(3,1){\line(0,1){1}}
  \put(6,0){\line(0,1){2}}
  \put(0.8,2.5){1}
  \put(1.8,2.5){2}
  \put(2.8,2.5){3}
  \put(3.8,2.5){4}
  \put(4.8,2.5){5}
  \put(5.8,2.5){6}
\end{picture}  .
\end{center}

\end{example}

$\ $

\begin{lemma}    \label{lemma:6.10}
Let $a$ and $\{ a_t \mid t \in [0, \infty ) \}$ be as in 
Proposition \ref{prop:6.7}, and let $\omega$ be a string in 
$\{ 1, * \}^n$.  The limit considered on the left-hand side 
of Equation (\ref{eqn:67a}) exists, and is equal to 
\begin{equation}   \label{eqn:610a}
\sum_{\tau \in NC_{\omega} (2n)} 
\term^{(U)}_{\tau} \cdot \term^{(Q)}_{\tau},
\end{equation}
where for every $\tau \in NC_{\omega} (2n)$ the numbers 
$\term^{(U)}_{\tau}$ and $\term^{(Q)}_{\tau}$ are defined 
as follows:

\noindent
$\bullet$ Let $V_o$ be the unique block of $\tau$
such that $V_o \subseteq U_{\omega}$ and $| V_o |$ is odd, and
let $V_1, \ldots , V_k$ be the other blocks of $\tau$ which 
are contained in $U_{\omega}$.  Then 
\begin{equation}   \label{eqn:610b}
\term_{\tau}^{(U)} = (-1)^{(|V_o|-1)/2 } C_{(|V_o|-1)/2} \cdot
\prod_{i=1}^k (-1)^{(|V_i|-2)/2 } C_{(|V_i|-2)/2}.
\end{equation}

\noindent
$\bullet$ Let $W_1, \ldots , W_{\ell}$ be the blocks of $\tau$ 
which are contained in $Q_{\omega}$.  Then 
\begin{equation}   \label{eqn:610c}
\term_{\tau}^{(Q)} = 
\prod_{j=1}^{\ell} \kappa_{|W_j|} (q, \ldots , q).
\end{equation}

\vspace{6pt}

\noindent
In the case (which may occur) when $NC_{\omega} (2n) = \emptyset$, 
the quantity (\ref{eqn:610a}) should be read as $0$.
\end{lemma}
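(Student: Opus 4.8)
The plan is to express $\kappa_n ( a_t^{ \omega (1) }, \ldots , a_t^{ \omega (n) } )$ as a free cumulant with products as entries and then pass to the limit term by term. Since $a_t = u_t q$ and $a_t^{*} = q u_t^{*}$, one has $\kappa_n ( a_t^{ \omega (1) }, \ldots , a_t^{ \omega (n) } ) = \kappa_n ( c_1 c_2 , c_3 c_4 , \ldots , c_{2n-1} c_{2n} )$, where for each $i$ the pair $(c_{2i-1}, c_{2i})$ equals $(u_t , q)$ if $\omega (i) = 1$ and equals $(q, u_t^{*})$ if $\omega (i) = *$. First I would apply the Krawczyk--Speicher formula (\ref{eqn:24b}) from Remark \ref{rem:2.4}.2, with $\sigma = \{ \{ 1,2 \}, \{ 3,4 \}, \ldots , \{ 2n-1, 2n \} \}$, to rewrite this as $\sum_{ \pi } \prod_{V \in \pi} \kappa_{|V|} \bigl( (c_1, \ldots , c_{2n}) \mid V \bigr)$, the sum running over $\pi \in NC(2n)$ with $\pi \vee \sigma = 1_{2n}$ --- which is condition (i) of Definition \ref{def:6.8}.2. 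By construction the $u_t$-entries occupy exactly the positions in $U_{\omega}$ and the $q$-entries exactly those in $Q_{\omega}$; since $\{ u_t, u_t^{*} \}$ is free from $q$, a mixed cumulant vanishes, so the only $\pi$ contributing are those in which every block is contained in $U_{\omega}$ or in $Q_{\omega}$ --- which is condition (ii). For such a $\pi$, a block $W \subseteq Q_{\omega}$ contributes the $t$-independent factor $\kappa_{|W|}(q, \ldots , q)$, while a block $V = \{ i_1 < \cdots < i_p \} \subseteq U_{\omega}$ contributes $\kappa_p ( u_t^{\epsilon_1}, \ldots , u_t^{\epsilon_p} )$, where $\epsilon_h = 1$ if $i_h$ is odd and $\epsilon_h = *$ if $i_h$ is even. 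Running the same computation with a Haar unitary $u$ in place of $u_t$ gives the corresponding expansion of $\kappa_n ( a^{\omega (1)}, \ldots , a^{\omega (n)} )$, with identical $q$-factors, so upon subtracting, the $q$-factors pull out of each $\pi$-term.

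Next I would pin down the asymptotics of each $U_{\omega}$-factor. For a block $V$ with associated string $\omega_V := (\epsilon_1, \ldots , \epsilon_p) \in \{ 1,* \}^p$ I would set $c_V := \lim_{t\to\infty} \kappa_p (u_t^{\epsilon_1}, \ldots , u_t^{\epsilon_p})$ and $d_V := \lim_{t\to\infty} e^{t/2}\bigl( \kappa_p (u_t^{\epsilon_1}, \ldots , u_t^{\epsilon_p}) - c_V \bigr)$. By Proposition \ref{prop:6.2} and the Haar-unitary formula quoted there, $c_V$ equals the constant $Z_{\omega_V}^{(0)}$ when $p$ is even, namely $(-1)^{(p-2)/2} C_{(p-2)/2}$ if $\omega_V$ is alternating and $0$ otherwise, while $c_V = 0$ whenever $p$ is odd. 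By Corollary \ref{cor:6.4} (equivalently Theorem \ref{thm:6.3}), the limit $d_V$ exists and equals $(-1)^{(p-1)/2} C_{(p-1)/2}$ if $p$ is odd and $\omega_V$ is alternating in the cyclic sense of Definition \ref{def:6.1}.2, and $0$ otherwise; in particular $d_V = 0$ whenever $p$ is even. Since, by Proposition \ref{prop:3.1}, each $\kappa_p(u_t^{\epsilon_1}, \ldots)$ is a finite sum $\sum_m Z^{(m)}(t)(e^{-t/2})^m$ with $Z^{(m)} \in \bQ [x]$ and with $m$ running over the integers of the parity of $p$, these facts yield $\kappa_p(u_t^{\epsilon_1}, \ldots) = c_V + d_V e^{-t/2} + o(e^{-t/2})$ as $t \to \infty$. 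Multiplying such expansions over the finitely many $U_{\omega}$-blocks of a fixed $\pi$ gives $\prod_V \kappa_{|V|}(u_t \cdots) - \prod_V c_V = e^{-t/2} \sum_{V_o} d_{V_o} \prod_{V \neq V_o} c_V + o(e^{-t/2})$, the sum over blocks $V_o \in \pi$ contained in $U_{\omega}$. Dividing $\kappa_n(a_t^{\omega}) - \kappa_n(a^{\omega})$ by $e^{-t/2}$ and letting $t \to \infty$ then produces $\sum_{\pi} \bigl( \prod_{W \subseteq Q_{\omega}} \kappa_{|W|}(q,\ldots,q) \bigr) \sum_{V_o} d_{V_o} \prod_{V \neq V_o} c_V$.

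Finally I would determine which terms survive. A summand with a chosen distinguished block $V_o$ is nonzero only if $d_{V_o} \neq 0$ --- forcing $|V_o|$ odd and $\omega_{V_o}$ cyclically alternating --- and if $c_V \neq 0$ for every other $U_{\omega}$-block $V$ --- forcing $|V|$ even and $\omega_V$ alternating. In particular $\pi$ must have exactly one $U_{\omega}$-block of odd cardinality: none would make every candidate $d_{V_o}$ vanish, and two or more would, for any choice of $V_o$, leave some odd block with $c_V = 0$. Reading the parities of the block elements back as the symbols $1$ and $*$, these requirements on $\pi$ are precisely conditions (i)--(v) of Definition \ref{def:6.8}.2, i.e.~that $\pi = \tau \in NC_{\omega}(2n)$, with $V_o$ the distinguished odd block of condition (iii). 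For such $\tau$, the surviving summand is $\bigl( \prod_{W \subseteq Q_{\omega}} \kappa_{|W|}(q, \ldots) \bigr) \cdot \Bigl( d_{V_o} \prod_i c_{V_i} \Bigr)$, which is exactly $\term_{\tau}^{(Q)} \cdot \term_{\tau}^{(U)}$ in the notation of (\ref{eqn:610b})--(\ref{eqn:610c}); summing over $\tau \in NC_{\omega}(2n)$ gives (\ref{eqn:610a}), and if $NC_{\omega}(2n) = \emptyset$ the sum is empty, hence $0$. I expect the main obstacle to be purely bookkeeping: checking that the non-vanishing patterns of the factors $c_V$ and $d_{V_o}$ dovetail exactly with conditions (iii)--(v) of Definition \ref{def:6.8}.2, and in particular that the cyclic-alternating requirement on the single odd block is precisely what forces $V_o$ to be unique and matches conditions (iii)--(iv).
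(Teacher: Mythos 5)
Your proposal is correct and follows essentially the same route as the paper: apply the Krawczyk--Speicher product formula with $\sigma = \{\{1,2\},\ldots,\{2n-1,2n\}\}$ to get condition (i), use freeness of $q$ from $\{u_t,u_t^*\}$ to get condition (ii), and then extract the coefficient of $e^{-t/2}$ from each $\pi$-term using Proposition \ref{prop:6.2}.2 and Theorem \ref{thm:6.3}, which forces conditions (iii)--(v) and yields the factors in (\ref{eqn:610b})--(\ref{eqn:610c}). Your explicit $c_V$/$d_V$ bookkeeping and the product-rule expansion simply spell out the step the paper dispatches with ``one easily sees.''
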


\begin{proof}
In the cumulant 
$\kappa_n ( a_t^{(\omega (1) )}, \ldots , a_t^{ (\omega (n) )} )$
we replace every $a_t$ by $u_t q$ and every $a_t^{*}$ by 
$q u_t^{*}$, and we invoke the formula for a cumulant with products 
of entries which was reviewed in Remark \ref{rem:2.4}.2.  This gives
\begin{equation}    \label{eqn:610d}
\kappa_n ( a_t^{(\omega (1) )}, \ldots , a_t^{ (\omega (n) )} )
= \sum_{ \begin{array}{c}
{\scriptstyle  \tau \in NC(2n) \ with}  \\
{\scriptstyle  \tau \vee 
               \{ \{ 1,2 \}, \{ 3,4 \}, \ldots \} = 1_{2n} }
\end{array} }  \ \ \Term_{\tau},
\end{equation}
where every $\Term_{\tau}$ is a product of cumulants with entries
from $\{ q, u_t, u_t^{*} \}$.  But $q$ is free from 
$\{ u_t, u_t^{*} \}$; hence free cumulants which mix $q$ with
$\{ u_t, u_t^{*} \}$ vanish, and this implies that on the 
right-hand side of (\ref{eqn:610d}) we can restrict the sum to 
the smaller set
\[
\cT_{\omega} := \{ \tau \in NC(2n) \mid 
\tau \mbox{ fulfills conditions (i) and (ii) in 
Definition \ref{def:6.8}.2} \} .
\]

For every $\tau \in \cT_{\omega}$, the quantity $\Term_{\tau}$ 
appearing in (\ref{eqn:610d}) is a product where some factors 
are joint cumulants of $u_t$ and $u_t^{*}$, while some other 
factors are cumulants of $q$.  The dependence on $t$ is coming 
exclusively from the factors involving $u_t$ and $u_t^{*}$, which 
are quasi-polynomials in $t/2$, in the way found earlier in the 
paper.  If we are interested in the limit on the left-hand side 
of Equation (\ref{eqn:67a}), then what we have to do is pick the 
coefficient of $e^{-t/2}$ in every $\Term_{\tau}$.  (Note that 
the coefficient of $( e^{-t/2} )^0$ in $\Term_{\tau}$, if 
existing, will be removed by the subtraction of 
$\kappa_n ( a^{(\omega (1) )}, \ldots , a^{ (\omega (n) )} )$
in the numerator of (\ref{eqn:67a}).)  By invoking Proposition
\ref{prop:6.2}.2 and Theorem \ref{thm:6.3}, one easily sees that 
a partition $\tau \in \cT_{\omega}$ can include a contribution of 
order $e^{-t/2}$ in $\Term_{\tau}$ only if $\tau$ also satisfies
the conditions (iii)--(v) listed in Definition 
\ref{def:6.8}.2, i.e.~only if $\tau \in NC_{\omega} (2n)$. 
(If $NC_{\omega} (2n) = \emptyset$, then we see at this point
that the limit in (\ref{eqn:67a}) is equal to $0$.)  Finally, 
for every $\tau \in NC_{\omega} (2n)$ one has 
$\Term_{\tau} = \term^{(U)}_{\tau} \cdot \term^{(Q)}_{\tau}$, 
with $\term^{(U)}_{\tau}$ and $\term^{(Q)}_{\tau}$ described 
as in Equations (\ref{eqn:610b}), (\ref{eqn:610c}); this 
verification is immediate (with the signed Catalan numbers 
in (\ref{eqn:610b}) coming from Proposition \ref{prop:6.2}.2 
and Theorem \ref{thm:6.3}), and is left as exercise to the 
reader.
\end{proof}

$\ $

Now, there are many strings $\omega \in \{1,*\}^n$ 
with $NC_{\omega} (2n) = \emptyset$.  An obvious necessary condition
for $NC_{\omega} (2n)$ being non-empty is that 
$| \ell - \ell ' | = 1$, where 
\[
\ell := | \{ 1 \leq i \leq n \mid \omega (i) = 1 \} |
\mbox{ and }
\ell ' := | \{ 1 \leq i \leq n \mid \omega (i) = * \} | ;
\]
indeed, if it is not true that $| \ell - \ell ' | = 1$, then no
partition $\tau \in NC(2n)$ can satisfy conditions (iii)--(v) 
of Definition \ref{def:6.8}.2.  But even when 
$| \ell - \ell ' | = 1$, it still turns out that
$NC_{\omega} (2n) = \emptyset$ unless $\omega$ is alternating.
This is caused by the condition (i) of Definition \ref{def:6.8}.2.
The next lemma records the precise statement that we will need
later on; the proof of the lemma (which goes in the same spirit 
as those of Propositions 11.25 or 15.1 in \cite{NS2006}) is left 
as exercise.

$\ $

\begin{lemma}   \label{lemma:6.11}
Let $\omega = ( \omega (1), \ldots , \omega (n))$ be a string
in $\{ 1,* \}^n$ for some $n \geq 2$, such that 
$\omega (1) = \omega (n) = 1$.  If 
$NC_{\omega} (2n) \neq \emptyset$, then $n$ is odd and $\omega$
is the alternating string $(1,*,1, \ldots , *,1)$.
\hfill $\square$
\end{lemma}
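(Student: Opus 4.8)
The plan is to argue by contradiction: assume $NC_\omega(2n)\neq\emptyset$, fix $\tau\in NC_\omega(2n)$, and extract the conclusion from conditions (i)--(v) of Definition \ref{def:6.8}. Two preliminary reductions will carry most of the weight. First, condition (i) is cleanest when restated as: writing $\epsilon:=\{\{1,2\},\{3,4\},\dots,\{2n-1,2n\}\}$ for the interval pairing, the equality $\tau\vee\epsilon=1_{2n}$ holds \emph{iff} there is no proper non-empty $S\subseteq\{1,\dots,2n\}$ that is simultaneously a union of blocks of $\tau$ and a union of pairs of $\epsilon$ (i.e. a union of ``slots'' $\{2i-1,2i\}$). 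Second, since $\omega(i)\in\{1,*\}$, in every slot $\{2i-1,2i\}$ exactly one point lies in $U_\omega$ and the other in $Q_\omega$; in particular $\omega(1)=1$ forces $1\in U_\omega$, $2\in Q_\omega$, and $\omega(n)=1$ forces $2n-1\in U_\omega$, $2n\in Q_\omega$.

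The core step I would prove next is a ``no trapped $U$-block'' lemma: if $B=\{b_1<\dots<b_r\}$ is a block of $\tau$ with $B\subseteq U_\omega$ and $r\geq 2$, then $I(B):=\{b_1,b_1+1,\dots,b_r\}$ is a union of blocks of $\tau$ (non-crossingness forces every block meeting an open interval $(b_j,b_{j+1})$ to be contained in it), and the only $\epsilon$-pairs that can leave $I(B)$ are $\{b_1-1,b_1\}$ (present only if $b_1$ is even) and $\{b_r,b_r+1\}$ (present only if $b_r$ is odd), since every interior point $p\in(b_1,b_r)$ has both $p\pm1$ inside $I(B)$. Hence if $b_1$ is odd and $b_r$ is even, then $I(B)$ is $\tau$-saturated and a union of slots, so by the restatement of (i) it equals $\{1,\dots,2n\}$; this forces $b_r=2n$, contradicting $2n\in Q_\omega$ while $b_r\in U_\omega$. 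Conclusion: no block of $\tau$ contained in $U_\omega$ with at least two elements has least element odd and greatest element even.

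From here the rest is bookkeeping. Combined with (iv)--(v), the lemma pins down the shape of the part of $\tau$ inside $U_\omega$: every even block there has alternating parities of even length, hence opposite-parity ends, hence (the forbidden case being excluded) least element even and greatest element odd; and the block containing the point $1$ must be the unique odd block $V_o$ of (iii) (it has least element $1$, odd, so it cannot be an even block), so $V_o$ has odd size, least element $1$, greatest element forced odd, and cyclically-alternating parities --- i.e. its elements have parities $\mathrm{odd},\mathrm{even},\mathrm{odd},\dots,\mathrm{even},\mathrm{odd}$ with the one parity-repetition at the cyclic wrap-around. I would then use the identity that the parities of the elements of $U_\omega$, read in increasing order, reproduce $\omega$ exactly (under $1\mapsto\mathrm{odd}$, $*\mapsto\mathrm{even}$), and run a descent on $n$: peel off $V_o$ together with the material nested in its gaps and in the region to the right of $\max V_o$, checking that (i)--(v) restrict to the smaller pieces, to conclude that $\omega$ must be the alternating string $(1,*,1,\dots,*,1)$ of Definition \ref{def:6.1}. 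That $n$ is odd follows either from $\omega$ being alternating with $\omega(1)=\omega(n)=1$, or independently from a parity count: each even block inside $U_\omega$ has equally many odd- and even-labelled elements while $V_o$ contributes an imbalance of exactly one, so $|\{i:\omega(i)=1\}|$ and $|\{i:\omega(i)=*\}|$ differ by $1$ and sum to $n$.

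The main obstacle I anticipate is the final descent step: organizing the induction so that the constraints on $\tau$ genuinely restrict to the regions cut out by $V_o$ --- the subtlety being that $V_o$ may itself have non-empty gaps which can legitimately host further even blocks, and one must verify these cannot make $\omega$ fail to alternate at the junctions. This is exactly the kind of purely combinatorial verification the authors defer to the reader, in the spirit of the proofs of Propositions 11.25 and 15.1 in \cite{NS2006}; I would treat the restatement of (i), the ``no trapped block'' lemma, and the parity count as quick preliminaries and concentrate the effort on that descent.
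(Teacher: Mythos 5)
The paper offers no proof of Lemma \ref{lemma:6.11} (it is explicitly left as an exercise), so I can only judge your proposal on its own terms. Your preliminaries are correct and well argued: the restatement of condition (i), the ``no trapped $U$-block'' lemma (no block $B\subseteq U_{\omega}$ with $|B|\geq 2$ has odd minimum and even maximum), the identification of the block containing $1$ as $V_o$ with linear parity pattern $\mathrm{odd},\mathrm{even},\dots,\mathrm{odd}$, and the parity count giving that $n$ is odd. However, the main conclusion --- that $\omega$ alternates --- is deferred entirely to the ``descent'', and the facts you have actually established provably do not imply it. Take $n=5$, $\omega=(1,*,*,1,1)$, so $U_{\omega}=\{1,4,6,7,9\}$, and the $U$-side blocks $V_o=\{1\}$, $\{4,9\}$, $\{6,7\}$: this configuration is non-crossing, satisfies (ii)--(v) of Definition \ref{def:6.8}, every multi-element block has even minimum and odd maximum, and $V_o$ contains $1$ --- everything you have proved holds --- yet $\omega$ is not alternating. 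What kills it is condition (i) again: for any completion on the $Q$-side, the set $\{5,6,7,8\}$ is a union of slots and, by non-crossingness with $\{4,9\}$, a union of $\tau$-blocks. So (i) must be invoked a second time in whatever finishes the proof; your sketch does not say how, and the regions cut out by $V_o$ contain no odd $U$-block, so they are not instances of the statement being proved and the induction hypothesis would need reformulating.

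The good news is that no descent is needed: apply your trapping argument to each \emph{gap} between consecutive elements of a $U$-block rather than to the block's convex hull. If $b_j<b_{j+1}$ are consecutive in a block $B\subseteq U_{\omega}$ with $b_j$ even and $b_{j+1}$ odd, then $G=\{b_j+1,\dots,b_{j+1}-1\}$ is a union of slots and (by non-crossingness with $B$) a union of $\tau$-blocks, so condition (i) forces $G=\emptyset$, i.e.\ $b_{j+1}=b_j+1$. Combined with the parity patterns you derived ($\mathrm{e},\mathrm{o},\mathrm{e},\dots,\mathrm{o}$ for even blocks; $\mathrm{o},\mathrm{e},\mathrm{o},\dots,\mathrm{o}$ for $V_o$), this says that every even element $u$ of $U_{\omega}$ has $u+1\in U_{\omega}$ and every odd element $u\neq 1$ has $u-1\in U_{\omega}$; through your dictionary between parities of $U_{\omega}$ and the symbols of $\omega$, this is exactly the statement that $\omega$ contains no substring $(*,*)$ and no substring $(1,1)$, whence $\omega$ alternates and, given $\omega(1)=\omega(n)=1$, equals $(1,*,1,\dots,*,1)$ with $n$ odd. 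With that strengthening your argument closes completely and no induction is required.
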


$\ $

We are thus prompted to focus on alternating strings of odd length. 
In order to describe what is going on in this case, we introduce 
some additional bits of notation.

$\ $

\begin{rem-and-notation}    \label{rem:6.12}
Let $k$ be a positive integer, and consider the alternating 
string 
$\omega_k := (1,*,1, \ldots , *,1) \in \{ 1,* \}^{2k-1}$.
Note that the sets 
$U_{\omega_k}, Q_{\omega_k} \subseteq \{ 1, \ldots , 4k-2 \}$ 
associated to $\omega_k$ in Definition \ref{def:6.8}.2 are
\[
U_{\omega_k} = \{ 1,4,5,8,9, \ldots , 4k-4, 4k-3 \}
\mbox{ and }
Q_{\omega_k} = \{ 2,3,6,7, \ldots , 4k-6, 4k-5, 4k-2 \} .
\]

$1^o$ For every partition $\pi \in NC(k)$ we denote by $\pifatodd$
the partition of $U_{\omega_k}$ which is defined by ``converting''
the points $1,2, \ldots , k$ into the groups of points $\{ 1 \}$, 
then $\{ 4,5 \}$, then $\{ 8,9 \} , \ldots \, ,$ then 
$\{ 4k-4, 4k-3 \} $ of $U_{\omega_k}$.
That is, $\pifatodd$ has blocks of the form 
\[
\widetilde{V} := \bigcup_{i \in V} \{ 4i-4, 4i-3 \} , 
\mbox{ where $V$ is a block of $\pi$ such that $1 \not\in V$,}
\]
and also has a block 
\[
\widetilde{V_o} := \{ 1 \} \cup \Bigl( \, 
\bigcup_{ \begin{array}{c}
{\scriptstyle i \in V_o,} \\
{\scriptstyle i \neq 1} 
\end{array}  } \ \{ 4i-4, 4i-3 \} \, \Bigr) , 
\]
where $V_o$ is the block of $\pi$ such that $1 \in V_o$. 

Likewise, for every $\rho \in NC(k)$ we denote by $\rhofateven$
the partition of $Q_{\omega_k}$ which is defined by converting
the points $1,2, \ldots , k$ into the groups of points $\{ 2,3 \}$, 
then $\{ 6,7 \} , \ldots \, ,$ then $\{ 4k-6, 4k-5 \}$, then 
$\{ 4k-2 \}$ of $Q_{\omega_k}$.  For example, for $k=4$ we have:

$\ $
\\
\begin{center}
  \setlength{\unitlength}{0.3cm}
$\pi$ = 
  \begin{picture}(4,2)
  \thicklines
  \put(1,-1){\line(0,1){2}}
  \put(1,-1){\line(1,0){2}}
  \put(2, 0){\line(0,1){1}}
  \put(3,-1){\line(0,1){2}}
  \put(4, 0){\line(0,1){1}}
  \put(0.8,1.2){1}
  \put(1.8,1.2){2}
  \put(2.8,1.2){3}
  \put(3.8,1.2){4}
  \end{picture}
$\ $ \hspace{0.2cm} $\Rightarrow \ \pifatodd$ = 
  \begin{picture}(14,2)
  \thicklines
  \put(1,-1){\line(0,1){2}}
  \put(1,-1){\line(1,0){8}}
  \put(4, 0){\line(0,1){1}}
  \put(4, 0){\line(1,0){1}}
  \put(5, 0){\line(0,1){1}}
  \put(8,-1){\line(0,1){2}}
  \put(9,-1){\line(0,1){2}}
  \put(12,-1){\line(0,1){2}}
  \put(12,-1){\line(1,0){1}}
  \put(13,-1){\line(0,1){2}}
  \put(0.8,1.2){1}
  \put(3.8,1.2){4}
  \put(4.8,1.2){5}
  \put(7.8,1.2){8}
  \put(8.8,1.2){9}
  \put(11,1.2){12}
  \put(12.7,1.2){13}
  \end{picture}
\end{center}

\vskip6pt

and 

\vskip6pt

\begin{center}
  \setlength{\unitlength}{0.3cm}
$\rho$ = 
  \begin{picture}(3,2)
  \thicklines
  \linethickness{0.6mm}
  \put(1,-1){\line(0,1){2}}
  \put(1,-1){\line(1,0){1}}
  \put(2,-1){\line(0,1){2}}
  \put(3,-1){\line(0,1){2}}
  \put(3,-1){\line(1,0){1}}
  \put(4,-1){\line(0,1){2}}
  \put(0.8,1.2){1}
  \put(1.8,1.2){2}
  \put(2.8,1.2){3}
  \put(3.8,1.2){4}
  \end{picture}
$\ $ \hspace{0.2cm} $\Rightarrow \ \rhofateven$ = 
\begin{picture}(14,2)
  \thicklines
  \linethickness{0.6mm}
  \put(2.3,-1){\line(0,1){2}}
  \put(2.3,-1){\line(1,0){5}}
  \put(3.3,-1){\line(0,1){2}}
  \put(6.3,-1){\line(0,1){2}}
  \put(7.3,-1){\line(0,1){2}}
  \put(10.3,-1){\line(0,1){2}}
  \put(10.3,-1){\line(1,0){4}}
  \put(11.3,-1){\line(0,1){2}}
  \put(14.3,-1){\line(0,1){2}}
  \put(1.8,1.2){2}
  \put(2.8,1.2){3}
  \put(5.8,1.2){6}
  \put(6.8,1.2){7}
  \put(9.4,1.2){10}
  \put(11,1.2){11}
  \put(13.8,1.2){14}
\end{picture}
\end{center}

$\ $

$\ $

$2^o$ There exists an analogy between the notation for
$\pifatodd$ and $\rhofateven$ that was just introduced,
and the notation for $\piodd$ and $\roeven$ used in the 
description of the Kreweras complementation map, in Notation 
\ref{def:2.1}.3.  This is due to the fact that if we think of 
the sets 
\[
\{ 1 \}, \, \{ 2,3 \}, \{ 4,5 \}, \ldots , 
\{ 4k-4, 4k-3 \}, \, \{ 4k-2 \}
\]
as of a sequence of $2k$ consecutive ``fat points'', then 
$U_{\omega_k}$ covers the odd positions and $Q_{\omega_k}$ covers 
the even positions among these fat points.

As a consequence of the above, it is easily seen that if we 
start with a partition $\pi \in NC(k)$ and we draw the combined
pictures of $\pifatodd$ and $( \Kr (\pi ) )^{ \qpoints }$, 
then we obtain a partition in $NC(4k-2)$.  Moreover, 
$( \Kr (\pi ) )^{ \qpoints }$ can be characterized as the 
largest (with respected to reverse refinement order) partition 
$\sigma$ of the set $Q_{\omega_k}$ which has the property that
$\pifatodd \sqcup \sigma \in NC(4k-2)$.

\vspace{6pt}

$3^o$ Let $\rho$ be in $NC(k)$, and consider the partition
$\rhofateven$ of $Q_{\omega_k}$.  We will need to work with 
a special way of breaking the blocks of $\rhofateven$ into 
pairs (plus a singleton), which is described as follows.
Let $W = \{ j_1, \ldots , j_p \}$ be a block of $\rho$, 
where $j_1 < \cdots < j_p$.  We distinguish two cases.

\vspace{6pt}

\noindent
{\em Case 1.} $j_p \neq k$.
In this case, the block of $\rhofateven$ that corresponds 
to $W$ is
\[
\widetilde{W} := \{ 4j_1 - 2, 4j_1 -1, 
4j_2 - 2, 4j_2 -1, \ldots , 4j_p - 2, 4j_p -1 \} ,
\]
and we break it into $p$ pairs by going cyclically as follows:
\begin{equation}    \label{eqn:612a}
\{ 4j_1 - 1, 4j_2 - 2 \}, \, 
\{ 4j_2 - 1, 4j_3 - 2 \}, \ldots , 
\{4j_{p-1} - 1, 4j_p - 2 \} , \,
\{4j_1 - 2, 4j_p - 1 \} .
\end{equation}

\vspace{6pt}

\noindent
{\em Case 2.} $j_p = k$.
In this case, the block of $\rhofateven$ that corresponds 
to $W$ is
\[
\widetilde{W} := \{ 4j_1 - 2, 4j_1 -1, 
4j_2 - 2, 4j_2 -1, \ldots , 4j_{p-1} - 2, 4j_{p-1} -1, 4j_p-2 \} ,
\]
and we break it into $p-1$ pairs and a singleton as follows:
\begin{equation}    \label{eqn:612b}
\{ 4j_1 - 2 \} , \, \{ 4j_1 - 1, 4j_2 - 2 \}, \, 
\{ 4j_2 - 1, 4j_3 - 2 \}, \ldots , \{ 4j_{p-1} - 1, 4j_p - 2 \} . 
\end{equation}

\vspace{6pt}

The partial pairing (with one singleton block) of $Q_{\omega_k}$
which results upon doing all the breaking described in 
(\ref{eqn:612a}) and (\ref{eqn:612b}) will be denoted as 
$\rhoqpairing$.  In the example with $k=4$ depicted in part 
$1^o$ of this notation, we get

$\ $
\\
\begin{center}
  \setlength{\unitlength}{0.3cm}
$\rho$ = 
  \begin{picture}(3,2)
  \thicklines
  \linethickness{0.6mm}
  \put(1,-1){\line(0,1){2}}
  \put(1,-1){\line(1,0){1}}
  \put(2,-1){\line(0,1){2}}
  \put(3,-1){\line(0,1){2}}
  \put(3,-1){\line(1,0){1}}
  \put(4,-1){\line(0,1){2}}
  \put(0.8,1.2){1}
  \put(1.8,1.2){2}
  \put(2.8,1.2){3}
  \put(3.8,1.2){4}
  \end{picture}
$\ $ \hspace{0.2cm} $\Rightarrow \ \rhoqpairing$ = 
\begin{picture}(14,2)
  \thicklines
  \linethickness{0.6mm}
  \put(2,-1){\line(0,1){2}}
  \put(2,-1){\line(1,0){5}}
  \put(3, 0){\line(0,1){1}}
  \put(3, 0){\line(1,0){3}}
  \put(6, 0){\line(0,1){1}}
  \put(7,-1){\line(0,1){2}}
  \put(10, 0){\line(0,1){1}}
  \put(11,-1){\line(0,1){2}}
  \put(11,-1){\line(1,0){3}}
  \put(14,-1){\line(0,1){2}}
  \put(1.8,1.2){2}
  \put(2.8,1.2){3}
  \put(5.8,1.2){6}
  \put(6.8,1.2){7}
  \put(9.2,1.2){10}
  \put(10.6,1.2){11}
  \put(13.6,1.2){14}
\end{picture}
\end{center}
\end{rem-and-notation}

$\ $

$\ $

Based on the notation introduced above we describe, in the 
next lemma, the structure of a general partition in 
$NC_{\omega_k} (4k-2)$.  (The statement of the lemma also uses
the lattice structure of $NC( Q_{\omega_k} )$, and invokes the
``$\sqcup$'' operation --- this is analogous to the note recorded
in Remark \ref{rem:2.1}.)

$\ $

\begin{lemma}   \label{lemma:6.13}
Let $k$ be a positive integer, let $\omega_k$ be the 
alternating string
$(1,*,1 \ldots , *,1) \in \{ 1,* \}^{2k-1}$, and consider 
the terminology introduced by Notation \ref{rem:6.12} in 
connection to $\omega_k$.

\vspace{6pt}

$1^o$ Let $\tau$ be a partition in $NC_{\omega_k} (4k-2)$.  
Then the restricted partition $\tau \mid U_{\omega_k}$ is of 
the form $\pifatodd$ for a (uniquely determined) $\pi \in NC(k)$.
The quantity $\term_{\tau}^{(U)}$ appearing in (\ref{eqn:610b})
of Lemma \ref{lemma:6.10} is precisely equal to 
$\moeb ( 0_k, \pi )$ for this $\pi \in NC(k)$.

\vspace{6pt}

$2^o$ Let $\tau$ be a partition in $NC_{\omega_k} (4k-2)$, 
let $\pi \in NC(k)$ be such that 
$\tau \mid U_{\omega_k} = \pifatodd$, and let us denote 
$\Kr ( \pi ) =: \rho \in NC(k)$.  Then the partition 
$\sigma := \tau \mid Q_{\omega_k} \in NC( Q_{\omega_k} )$
has the properties that
$\sigma \leq \rhofateven$ and 
$\sigma \vee \rhoqpairing = \rhofateven$.

\vspace{6pt}

$3^o$ Conversely: let $\pi$ be in $NC(k)$, put 
$\rho := \Kr ( \pi )$, and let $\sigma$ be a partition in  
$NC( Q_{\omega_k} )$ with the properties that
$\sigma \leq \rhofateven$ and
$\sigma \vee \rhoqpairing = \rhofateven$.  Then the 
partition $\tau := \pifatodd \sqcup \sigma$ of 
$\{ 1, \ldots , 4k-2 \}$ is in $NC_{\omega_k} (4k-2)$.
\hfill  $\square$
\end{lemma}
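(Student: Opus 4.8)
The plan is to read Lemma~\ref{lemma:6.13} as setting up an explicit bijection between $NC_{\omega_k}(4k-2)$ and the set of pairs $(\pi,\sigma)$ with $\pi\in NC(k)$, $\sigma\in NC(Q_{\omega_k})$, $\sigma\le\rhofateven$ and $\sigma\vee\rhoqpairing=\rhofateven$ (writing $\rho:=\Kr(\pi)$): parts $1^o$ and $2^o$ then say that $\tau\mapsto(\pi,\sigma)$ is well defined with the stated properties, and $3^o$ exhibits the inverse map $(\pi,\sigma)\mapsto\pifatodd\sqcup\sigma$. Throughout I would work in the ``fat point'' picture: split $\{1,\ldots,4k-2\}$ into the $2k$ consecutive fat points $\{1\},\{2,3\},\{4,5\},\ldots,\{4k-4,4k-3\},\{4k-2\}$, so that $U_{\omega_k}$ is the union of the $k$ odd-numbered fat points and $Q_{\omega_k}$ the union of the $k$ even-numbered ones, precisely in the manner of the analogy with $\piodd\sqcup\roeven$ recorded in Notation~\ref{rem:6.12}.2. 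Denote by $\ee$ the adjacency pairing $\{\{1,2\},\{3,4\},\ldots,\{4k-3,4k-2\}\}$ of condition~(i) of Definition~\ref{def:6.8}, and note that each block of $\ee$ joins one point of $U_{\omega_k}$ to one point of $Q_{\omega_k}$; by condition~(ii), every $\tau\in NC_{\omega_k}(4k-2)$ splits as $\tau=\tau_U\sqcup\tau_Q$ with $\tau_U:=\tau\mid U_{\omega_k}$ and $\tau_Q:=\tau\mid Q_{\omega_k}$.

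For $1^o$ the decisive point is the structural claim that $\tau_U$ respects the fat-point decomposition of $U_{\omega_k}$, i.e.\ that for $2\le i\le k$ the points $4i-4$ and $4i-3$ always lie in a single block of $\tau$. Granting this, $\tau_U$ descends to a non-crossing partition $\pi$ of the $k$ $U$-fat-points, one has $\tau_U=\pifatodd$ by construction, and conditions (iii)--(v) become automatic: a block of $\pifatodd$ has even size unless it contains the one-point fat point $\{1\}$, and, read in increasing order, its parity pattern is $(1,*,1,\ldots,*,1)$ in the exceptional case and $(*,1,*,\ldots,*,1)$ otherwise --- both alternating. The claim itself I would argue by contradiction: if some fat pair $\{4i-4,4i-3\}$ were split by $\tau$, then, using that $\tau$ is non-crossing, that the two halves of each $U$-fat-pair have opposite parities, and that (iv),(v) force every block of $\tau_U$ to be a union of whole $U$-fat-pairs together with at most the half-point $\{1\}$, one checks that the component of $\tau\vee\ee$ containing the orphaned half can never grow to all of $\{1,\ldots,4k-2\}$, contradicting~(i). (Already for $k=2$ one sees the mechanism: splitting $\{4,5\}$ strands $\{5,6\}$ as a separate block of $\tau\vee\ee$.) The evaluation $\term^{(U)}_\tau=\moeb(0_k,\pi)$ is then pure substitution into~(\ref{eqn:610b}): the $\pi$-block through $1$ produces a $\tau$-block of size $2|V_o^\pi|-1$ and every other $\pi$-block of size $m$ produces a $\tau$-block of size $2m$, so the corresponding factors of~(\ref{eqn:610b}) become $(-1)^{m-1}C_{m-1}$ and their product is $\prod_{V\in\pi}(-1)^{|V|-1}C_{|V|-1}=\moeb(0_k,\pi)$.

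For $2^o$ and $3^o$ put $\rho:=\Kr(\pi)$. Because $\pifatodd\sqcup\sigma$ is non-crossing while the $U$- and $Q$-fat-points interleave, the characterization of the Kreweras complement recalled in Notation~\ref{rem:6.12}.2 forces $\sigma\le\rhofateven$ in $2^o$ and, conversely, makes $\pifatodd\sqcup\rhofateven$ non-crossing, so that in $3^o$, since $\sigma\in NC(Q_{\omega_k})$ and $\sigma\le\rhofateven$, the partition $\tau:=\pifatodd\sqcup\sigma$ is again non-crossing; conditions (ii)--(v) for this $\tau$ are then immediate from the shape of $\pifatodd$, as observed in $1^o$. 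What is left --- and is, I expect, where essentially all the work lies --- is to match the connectivity requirement $\tau\vee\ee=1_{4k-2}$ of condition~(i) with the equality $\sigma\vee\rhoqpairing=\rhofateven$. The strategy is to contract each block of $\pifatodd$ to a single point: this turns $\tau\vee\ee=1_{4k-2}$ into the demand that $\sigma$, together with the pairs of $\ee$ routed ``through'' the $U$-blocks, generate $\rhofateven$ on $Q_{\omega_k}$, and chasing the non-crossing constraints block-by-block around each $\widetilde W$ of $\rhofateven$ shows that those induced pairs are precisely the cyclic pairing (\ref{eqn:612a})--(\ref{eqn:612b}) defining $\rhoqpairing$, including the bookkeeping of the distinguished singleton coming from the fat points $\{1\}$ and $\{4k-2\}$. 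Reading this identification forwards gives $2^o$, and reading it backwards supplies the remaining half of $3^o$. This final matching is entirely analogous in spirit to --- though rather more delicate than --- the Kreweras-complement step in the proof of Proposition~15.1 of \cite{NS2006}, which is why I would expect it to be the main obstacle of the lemma.
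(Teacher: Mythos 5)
A preliminary remark on the comparison itself: the paper contains no proof of Lemma \ref{lemma:6.13} --- the authors state explicitly, just before Definition \ref{def:6.8}, that the combinatorial facts in Lemmas \ref{lemma:6.11} and \ref{lemma:6.13} are ``left as an exercise to the interested reader'', and the lemma is printed with an empty box. So your proposal can only be judged on its own terms. Your architecture is certainly the intended one: read $1^o$--$3^o$ as a bijection $\tau \leftrightarrow (\pi,\sigma)$, work in the fat-point picture of Notation \ref{rem:6.12}.2, deduce $\sigma\le\rhofateven$ from the maximality characterization of $(\Kr(\pi))^{\qpoints}$, and reduce condition (i) to $\sigma\vee\rhoqpairing=\rhofateven$ by contracting the $U$-blocks. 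Your verifications that $\pifatodd$ automatically satisfies (iii)--(v) and that $\term^{(U)}_\tau=\moeb(0_k,\pi)$ are correct.

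The two steps carrying the real content are, however, not actually proved. First, your argument that no $U$-fat-pair $\{4i-4,4i-3\}$ is split by $\tau$ is circular: inside the contradiction you invoke the assertion that ``(iv),(v) force every block of $\tau_U$ to be a union of whole $U$-fat-pairs together with at most the half-point $\{1\}$'', which is exactly the statement being proved, and which is false as a consequence of (iv),(v) alone (for $k\ge 3$ the block $\{4,9\}\subseteq U_{\omega_k}$ has alternating parities yet splits two fat pairs). Nor does connectivity alone do the job: for $k=2$ the partition $\{\{1\},\{4\},\{5\},\{2,3,6\}\}$ is non-crossing, satisfies (i) and (ii), and splits $\{4,5\}$ --- it is excluded only by (iii); likewise $\{\{1,5\},\{4\},\{2,3\},\{6\}\}$ satisfies (i)--(iv) and is excluded only by (v). In particular your claimed $k=2$ ``mechanism'' (splitting $\{4,5\}$ strands $\{5,6\}$) fails for the first of these examples. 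A correct proof must genuinely interleave (i) with (iii)--(v), e.g.\ via the observation that a non-empty gap $(j_s,j_{s+1})$ of a block of $\tau$ can communicate with its exterior under $\tau\vee\ee$ only if $j_s$ is odd or $j_{s+1}$ is even, played against the parity alternation of the points of $U_{\omega_k}$; that argument is missing. Second, the equivalence of condition (i) with $\sigma\vee\rhoqpairing=\rhofateven$ --- which you yourself flag as ``where essentially all the work lies'' --- is only announced as a strategy: the block-by-block chase identifying the induced pairing with (\ref{eqn:612a})--(\ref{eqn:612b}), and the reason the target of the join is $\rhofateven$ rather than $1_{Q_{\omega_k}}$, are not carried out. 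As it stands the proposal is a sound plan with its two hardest verifications left open.
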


$\ $

\noindent
{\bf Proof of Proposition \ref{prop:6.7}.} 
If $n$ is even, then it is immediate that 
$NC_{\omega} (2n) = \emptyset$ for every $\omega \in \{ 1,* \}^n$
(cf.~discussion preceding Lemma \ref{lemma:6.11}).  Thus in 
this case the limit on the left-hand side of Equation 
(\ref{eqn:67a}) is indeed equal to $0$, as required.  
For the rest of the proof we will assume that $n$ is odd.

\vspace{6pt}

We next note a couple of reductions that can be done on $\omega$.

(a) Observe that if the conclusion of the proposition 
holds for a string $\omega = ( \omega (1), \ldots , \omega (n))$, 
then it also holds for the ``adjoint'' string $\omega^{*}$ with 
entries
\[
\omega^{*} (i) = \left\{  \begin{array}{ll}
1, & \mbox{ if $\omega (n+1-i) = *$}   \\
*, & \mbox{ if $\omega (n+1-i) = 1$} 
\end{array}   \right\} , \ \ 1 \leq i \leq n.
\]
This follows from the left-right symmetry of free cumulants that was 
noted in part (c) of Remark \ref{rem:2.4}.3, where we also take 
into account the fact that $\{ u_t^{*} \mid t \geq 0 \}$ form a
free unitary Brownian motion, and that (due to the hypothesis 
$q=q^{*}$) all the limits postulated on the right-hand side of 
Equation (\ref{eqn:67a}) are real numbers.

(b) Observe that if the conclusion of the proposition holds for a 
string $\omega$, then it also holds for an $\omega '$ obtained by 
cyclically permuting the entries of $\omega$.  This is immediate 
from the invariance property of free cumulants noted in part (a) 
of Remark \ref{rem:2.4}.3.

\vspace{6pt}

As a result of the above observation (a), we see that it suffices 
to handle strings $\omega \in \{ 1,* \}^n$ that have
$\mathopen| \{  1 \leq i \leq n \mid \omega (i) = 1 \} \mathclose|
>
\mathopen| \{  1 \leq i \leq n \mid \omega (i) = * \} \mathclose|$.
By doing a suitable cyclic permutation of entries for such an 
$\omega$ and by using observation (b), we then see that it 
suffices to prove the proposition under the additional assumption 
that $\omega (1) = \omega (n) = 1$.

\vspace{6pt}

If $\omega (1) = \omega (n) = 1$ but $\omega$ is not alternating,
then Lemma \ref{lemma:6.11} says that $NC_{\omega} (2n) = \emptyset$.
The limit on the left-hand side of (\ref{eqn:67a}) is therefore 
equal to $0$, as required.

\vspace{6pt}

We are left to discuss the case when $\omega$ is alternating of 
odd length, with $\omega (1) = \omega (n) = 1$.  This is exactly the
case when 
$\omega = \omega_k = (1,*,1, \ldots , *,1) \in \{ 1,* \}^{2k-1}$
for some $k \in \bN$, as discussed in Notation \ref{rem:6.12}.  In
the remaining part of the proof we fix $k$, and we will prove that
for $\omega = \omega_k$, the limit on the left-hand side of 
(\ref{eqn:67a}) is equal to the $\beta_k$ described in Equation 
(\ref{eqn:67b}).  In view of Lemma \ref{lemma:6.10}, it will suffice 
to verify the equality between the summation formulas that appear 
in (\ref{eqn:610a}) and on the right-hand side of Equation 
(\ref{eqn:67b}).  The sum from (\ref{eqn:610a}) takes here the form 
\begin{equation}   \label{eqn:614a}
\sum_{\tau \in NC_{\omega_k} (4k-2)} 
\term^{(U)}_{\tau} \cdot \term^{(Q)}_{\tau}.
\end{equation}
But a partition $\tau \in NC_{\omega_k} (4k-2)$ is 
parametrized in Lemma \ref{lemma:6.13} in terms of a pair 
$( \pi , \sigma )$ with $\pi \in NC(k)$ and 
$\sigma \in NC( Q_{\omega_k} )$; recalling the specifics of how 
that goes (which includes the writing of $\term^{(U)}_{\tau}$ as 
$\moeb (0_k, \pi )$ and the writing of $\term^{(Q)}_{\tau}$ in terms 
of $\sigma$), we see that (\ref{eqn:614a}) can be rewritten in 
the form
\[
\sum_{ \begin{array}{c}  
        {\scriptstyle \pi \in NC(k)}  \\
        {\scriptstyle with \ \Kr ( \pi ) =: \rho} 
       \end{array} } \ \moeb (0_k, \pi ) \, \Bigl( 
\, \sum_{\begin{array}{c}
          {\scriptstyle \sigma \in NC( Q_{\omega_k} ), }  \\
          {\scriptstyle \sigma \vee \rhoqpairing = \rhofateven  }
         \end{array} } \ \prod_{W \in \sigma}
          \kappa_{|W|} (q,q, \ldots , q) \, \Bigr).
\]
In the above expression it is convenient that in the first sum 
we do the change of variable $\rho = \Kr ( \pi )$ (where we also 
substitute $\moeb (0_k, \pi )$ as $\moeb ( \rho , 1_k)$); the 
quantity in (\ref{eqn:614a}) takes then the form 
\begin{equation}   \label{eqn:614b}
\sum_{ \rho \in NC(k) } \moeb (\rho , 1_k ) \, \Bigl( 
\, \sum_{\begin{array}{c}
          {\scriptstyle \sigma \in NC( Q_{\omega_k} ), }  \\
          {\scriptstyle \sigma \vee \rhoqpairing = \rhofateven  }
         \end{array} } \ \prod_{W \in \sigma}
          \kappa_{|W|} (q,q, \ldots , q) \, \Bigr).
\end{equation}

Now let us fix for the moment a partition 
$\rho = \{ B_1, \ldots , B_{\ell} \} \in NC(k)$ and, for this 
particular $\rho$, let us examine the summation over $\sigma$ 
which appears in (\ref{eqn:614b}).  Let $W_1, \ldots , W_{\ell}$ 
be the blocks of $\rhofateven$, where $W_j$ corresponds to $B_j$ 
in the natural way (cf.~discussion in Notation \ref{rem:6.12}.3),
and where let us assume that $k \in B_{\ell}$, hence 
$4k-2 \in W_{\ell}$.  It is easy to see that the summation over 
$\sigma$ in (\ref{eqn:614b}) amounts in fact to doing $\ell$ 
independent summations, over partitions 
$\sigma_1 \in NC(W_1), \ldots , \sigma_{\ell} \in NC(W_{\ell})$.
Moreover, the formula (\ref{eqn:24b}) for cumulants with products 
as entries applies to each of these $\ell$ summations, leading to 
the conclusion that the result of the $j$-th summation is
\[
\left\{   \begin{array}{ll}
\kappa_{|W_j|} (q^2, \ldots , q^2), & \mbox{ if $j < \ell$,}   \\
\kappa_{|W_j|} 
( q, \, \underbrace{ q^2, \ldots , q^2}_{|W_j|-1} \, ) 
                                    & \mbox{ if $j = \ell$.} 
\end{array}   \right.
\]
Since $|W_j| = |B_j|$ for every $1 \leq j \leq \ell$, and since
$\kappa_{|B_{\ell}|} ( q,  q^2, \ldots , q^2 ) =
\kappa_{|B_{\ell}|} ( q^2, \ldots , q^2,q )$, we obtain that for 
our fixed $\rho$ we have:
\begin{equation}   \label{eqn:614c}
\sum_{\begin{array}{c}
        {\scriptstyle \sigma \in NC( Q_{\omega_k} ), }  \\
        {\scriptstyle \sigma \vee \rhoqpairing = \rhofateven  }
      \end{array} } \ \prod_{W \in \sigma}
         \kappa_{|W|} (q,q, \ldots , q) 
= \prod_{B \in \rho} 
\kappa_{|B|} \bigl( \, (q^2, \ldots , q^2,q) \mid B \, \bigr) .
\end{equation}

Finally, we let $\rho$ run in $NC(k)$, we substitute 
Equation (\ref{eqn:614c}) into the second summation from 
(\ref{eqn:614b}), and we arrive to the right-hand side of 
Equation (\ref{eqn:67b}), as we wanted.
\hfill $\square$

$\ $

$\ $

\textbf{Acknowledgements.}  

Part of the research presented in this paper was done while 
Guay-Paquet and Nica visited IRMAR Rennes,
in February and May of 2014.  The support and the friendly 
work environment of IRMAR are gratefully acknowledged.

We also express our thanks to the anonymous referee whose 
suggestions lead to an enriched content for Sections 5 and 6 
of the paper.

$\ $

$\ $

\end{document}